\crefname{paragraph}{paragraph}{paragraphs} 
\definecolor{orcidlogocol}{HTML}{A6CE39}
\tikzset{
  orcidlogo/.pic={
    \fill[orcidlogocol] svg{M256,128c0,70.7-57.3,128-128,128C57.3,256,0,198.7,0,128C0,57.3,57.3,0,128,0C198.7,0,256,57.3,256,128z};
    \fill[white] svg{M86.3,186.2H70.9V79.1h15.4v48.4V186.2z}
                 svg{M108.9,79.1h41.6c39.6,0,57,28.3,57,53.6c0,27.5-21.5,53.6-56.8,53.6h-41.8V79.1z M124.3,172.4h24.5c34.9,0,42.9-26.5,42.9-39.7c0-21.5-13.7-39.7-43.7-39.7h-23.7V172.4z}
                 svg{M88.7,56.8c0,5.5-4.5,10.1-10.1,10.1c-5.6,0-10.1-4.6-10.1-10.1c0-5.6,4.5-10.1,10.1-10.1C84.2,46.7,88.7,51.3,88.7,56.8z};
  }
}
\newcommand\orcidicon[1]{\href{https://orcid.org/#1}{\mbox{\scalerel*{
\begin{tikzpicture}[yscale=-1,transform shape]
\pic{orcidlogo};
\end{tikzpicture}
}{|}}}}
\theoremstyle{definition}
\newtheorem{Definition}{Definition}
\newtheorem{Example}{Example}
\theoremstyle{plain}
\newtheorem{Lemma}{Lemma}
\newtheorem{Proposition}{Proposition}
\newtheorem*{Proposition*}{Proposition}
\newtheorem{Corollary}{Corollary}
\theoremstyle{remark}
\newtheorem{Remark}{Remark}
\def\@email#1#2{%
 \endgroup
 \patchcmd{\titleblock@produce}
  {\frontmatter@RRAPformat}
  {\frontmatter@RRAPformat{\produce@RRAP{*#1\href{mailto:#2}{#2}}}\frontmatter@RRAPformat}
  {}{}
}%
\begin{document}

\preprint{AIP/123-QED}

\title[Machine learning of Lagrangian densities]{Learning of discrete models of variational PDEs from data}
\author{Christian Offen ${\protect \orcidicon{0000-0002-5940-8057}}$}
\email{christian.offen@uni-paderborn.de}
\homepage{https://www.uni-paderborn.de/en/person/85279}
\author{Sina Ober-Blöbaum}%
\affiliation{ 
Paderborn University, Department of Mathematics, Warburger Str. 100, 33098 Paderborn, Germany
}%

\date{14 November 2023}

\begin{abstract}
We show how to learn discrete field theories from observational data of fields on a space-time lattice. 
For this, we train a neural network model of a discrete Lagrangian density such that the discrete Euler--Lagrange equations are consistent with the given training data. We, thus, obtain a structure-preserving machine learning architecture.
Lagrangian densities are not uniquely defined by the solutions of a field theory. 
We introduce a technique to derive regularisers for the training process which optimise numerical regularity of the discrete field theory.
Minimisation of the regularisers guarantees that close to the training data the discrete field theory behaves robust and efficient when used in numerical simulations.
Further, we show how to identify structurally simple solutions of the underlying continuous field theory such as travelling waves. This is possible even when travelling waves are not present in the training data. This is compared to data-driven model order reduction based approaches, which struggle to identify suitable latent spaces containing structurally simple solutions when these are not present in the training data.
Ideas are demonstrated on examples based on the wave equation and the Schrödinger equation.
\\

Citation of published version\cite{DLNNPDE}: Chaos 34 (1), 013104 (2024). \url{https://doi.org/10.1063/5.0172287}.
\end{abstract}

\def\d{\mathrm{d}}
\def\D{\mathrm{D}}
\def\p{\partial}
\def\R{\mathbb{R}}
\def\N{\mathbb{N}}
\def\Z{\mathbb{Z}}
\def\vrtx{{{\mathrm{vrtx}}}}

\maketitle

\begin{quotation}
To learn practical models of dynamical systems from data, prior geometric knowledge and aspects of numerical integration theory can be exploited in the design of the data-driven architectures.
The article shows how to utilise numerical analysis informed neural networks and discrete field theories to model dynamical systems governed by first principles.

\end{quotation}

\section{Introduction}

The identification of models of dynamical systems from observational data is an important task in machine learning: the identified models can be used to interpolate data points, predict motions, develop control strategies, or analyse long-term behaviour\cite{Narendra1990,Han2019}. Additionally, physical properties of the system can be discovered such as symmetries, conservation laws\cite{SymHNN,SymLNN}, stress tensors\cite{Wu2018}, for instance. Moreover, data-driven models can be used as surrogates for analytic models when computations with explicit analytical models are prohibitively expensive, such as in molecular dynamics\cite{KADUPITIYA202010}.

To improve qualitative aspects of data-driven models, prior knowledge about physical aspects of the dynamics, such as symmetries, conservation laws, Hamiltonian or variational structure can be taken into account. The approach is referred to as {\em physics informed machine learning}. 
(Notice, however, that the term {\em physics informed neural networks} has also been used to describe neural network approximations of solutions to partial differential equations that represent physical laws. See Karniadakis et al\cite{Karniadakis2021} for a review article.)


Variational principles are at the heart of physical theories and constitute first principles from which dynamical models can be derived.
In such dynamical systems motions or solutions to the field equations constitute stationary points of an action functional. For instance, classical mechanical systems, electromagnetic field theories, fluid dynamics, as well as quantum dynamical systems are governed by variational principles.
Presence of variational structure in a dynamical system is related to many profound laws in physics such as the correspondence of symmetries and conserved quantities (Noether's theorem) \cite{Marsden1999,mansfield2010}.

To embed variational structure into machine learned models of dynamical systems, Greydanus et al propose to learn a model of a variational principle from data, called Lagrangian neural networks (LNNs) \cite{LNN}. This needs to be contrasted to learning of a model of a flow map or a vector field of a dynamical system without enforcing physical properties of the system \cite{Hamzi2021}. Similar ideas and extensions to LNN have been developed, such as combinations with model order reduction techniques \cite{Blanchette2020} or the inclusion of external forces \cite{Lutter2019}. The data-driven learning of variational principles needs to be contrasted to approaches that identify the analytic form of a (partial) differential equation via sparse regression \cite{Schaeffer2017,Kutz2017,tripura2023bayesian}. While the former aims at learning a model that can be used in numerical predictions, the latter focuses on interpretable system identification and requires a dictionary of candidate terms that can make up the differential equation.

Other approaches focus on exploiting Hamiltonian structure for learning models of dynamical systems. While Hamiltonian and variational structure are equivalent from a theoretical perspective (under non-degeneracy conditions), data-driven architectures for learning Hamiltonian systems, such as Hamiltonian Neural Networks\cite{HNN} or Symplectic Neural Networks \cite{SympNets}, differ from those architectures that learn models of action functionals as they require additional observation of momentum data rather than just position data (and possibly derivatives), or at least some prior knowledge of the symplectic phase space structure. However, much of the symplectic structure can be learned from data as well \cite{Bertalan2019,chen2023NeuralSymplecticForm}.

Instead of learning a model of a continuous action functional to describe a dynamical system with variational structure, Qin\cite{Qin2020} proposes to learn a discrete variational principle instead.
Discrete field theories can be trained on discrete observations of fields over a space-time lattice, while the training of continuous theories such as LNN\cite{LNN} requires observations of velocities and acceleration data.
Moreover, solutions of discrete field theories can be computed without further discretisation by numerical integrators such that discretisation errors are avoided.

\begin{figure}
\includegraphics[width=0.32\linewidth]{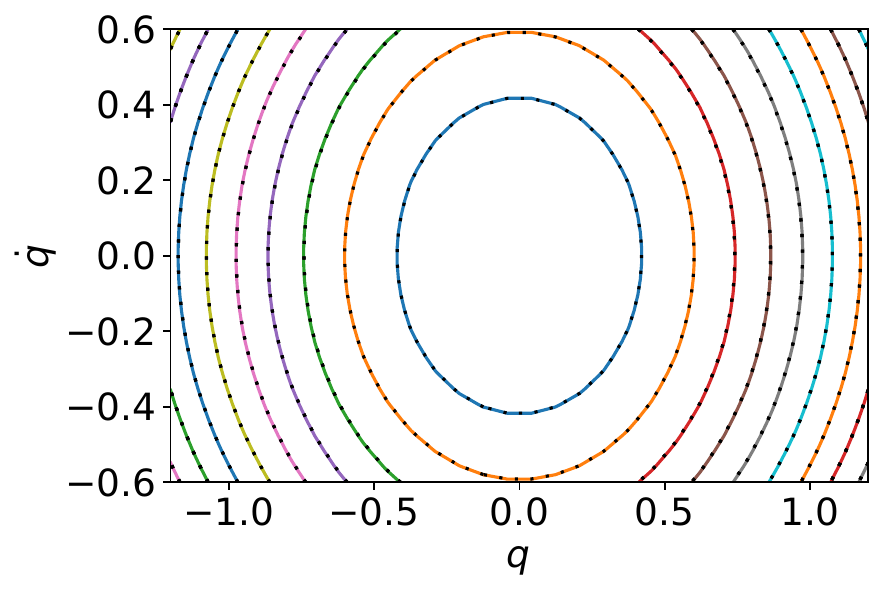}
\includegraphics[width=0.32\linewidth]{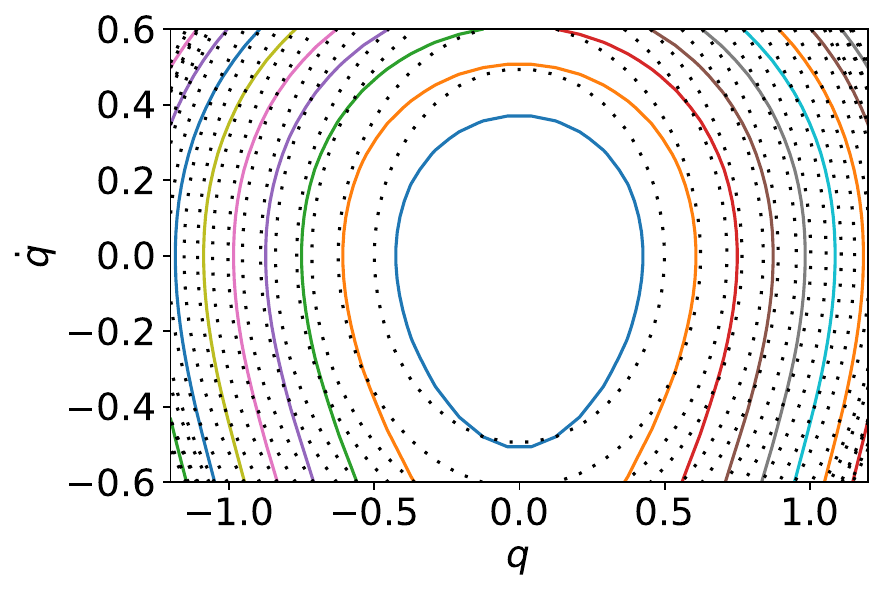}
\includegraphics[width=0.32\linewidth]{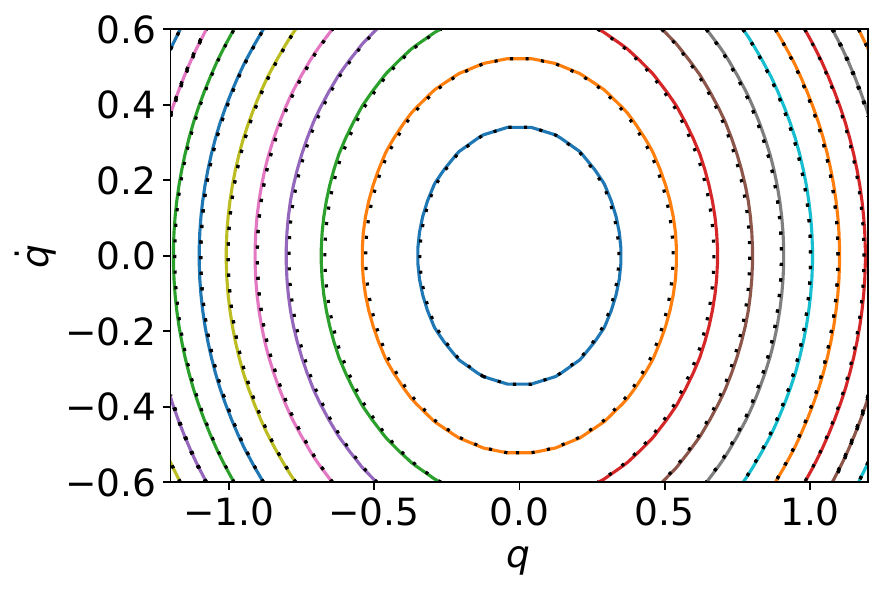}
\caption{Left: Motions of a data-driven model (solid lines) for the mathematical pendulum match motions of an analytic model (dotted). Centre: Numerically computed motions of the data-driven model are very inaccurate. Right: Applying the same numerical integrator to the analytic model yield much more accurate results.  (Figure taken from \cite[Fig.\ 6]{LagrangianShadowIntegrators}. See \cite{LagrangianShadowIntegrators} for details.)}\label{fig:1}
\end{figure}
However, the authors have demonstrated that numerical discretisation errors of learned continuous variational models of dynamical systems can be surprisingly big\cite{LagrangianShadowIntegrators}: the plot to the left of \cref{fig:1} shows that the exact trajectories of the learned dynamical system of the mathematical pendulum coincide with the exact trajectories of an analytic reference.
However, an application of a standard variational numerical integrator (variational midpoint rule\cite{MarsdenWestVariationalIntegrators}) to the learned model performs poorly (centre) while an application of the same integrator with the same step-size shows only small numerical errors when applied to the analytic reference (right).
The reason is that variational principles are not uniquely determined by a system's motions. In machine learning, care is required to identify a model of a variational principle that is not only consistent with the motion data, but also suitable for numerical computations. This will be a central aspect of this article.

While learning a discrete field theory avoids discretisation errors in numerical computation, additional regularisation is essential to avoid learning degenerate principles for which discrete field equations become unsolvable or badly conditioned. 
While the authors have discussed a geometric regularisation strategy applying to models based on Gaussian Processes or kernel methods\cite{LagrangianShadowIntegrators}, different regularisation strategies are required when artificial neural networks are used.
For variational dynamical systems, for which the underlying continuous system is an ordinary differential equation, we show first approaches in Lishkova et al\cite{SymLNN}. The present article extends the authors' conference contribution\cite{DLNNDensity}, in which machine learning architectures for discrete field theories are developed whose underlying dynamical system is a partial differential equation.

Machine learning of field theories governing infinite-dimensional dynamical systems needs to be contrasted with approaches that learn differential operators or a response function and do not exploit geometric structure or a discrete framework, such as DeepONet\cite{DeepONet} or pde-net\cite{PDENet,PDENet2}. Approaches that exploit Hamiltonian or Poisson structure can be found in some recent articles \cite{Matsubara2020,Jin2023,Eidnes2023}.

Further, identification of discrete field theories for partial differential equations needs to be contrasted with approaches based on model order reduction (MOR). Data-driven MOR-based techniques learn a reduction map of a spatially discretised system. Then, a full order model (FOM) is projected to the latent space to obtain a reduced order model (ROM). If no FOM is available, the learned reduction map is used to project dynamical data to the latent space. Subsequently, the ROM is learned, i.e.\ a data-driven model is fitted to describe the temporal dynamics of the system on the latent space.
Several approaches have been developed to preserve geometric structure in this context\cite{Blanchette2020,Blanchette2022,Glas2023,Carlsberg2015,sharma2023symplectic,sharma2022Lagrangian,Sharma2022,Tyranowski2022}.

In contrast to the data-driven MOR-based techniques, we learn the discrete field theory directly on a space-time lattice without a reduction step.
Indeed, discrete field theories are based on stencils coupling spacial and temporal variables only locally. 
We exploit this locality to obtain a reduction of computational complexity for training, which would otherwise require MOR. (Notice that computing solutions of the trained model could still be done by MOR-based techniques, when locality of the stencils cannot be exploited due to boundary conditions or numerical stability issues.)


We demonstrate that avoiding projection of the dynamics to a latent space can be advantageous if, for instance, highly symmetric solutions of the model, such as travelling waves, need to be identified in the data-driven model. We develop a method to detect and preserve solutions of the discrete field theory that correspond to highly symmetric solutions in the underlying continuous dynamical system. As the mesh of a discrete field theory is typically not compatible with the symmetries of the continuous system, we use a suitable symmetric ansatz of the discrete action functional based on Palais' principle of symmetric criticality \cite{palais1979,Torre2011} to design machine learning architectures for discrete field theories with travelling waves.

The main novelties of the article are
\begin{itemize}
	
	\item the systematic design of numerical analysis informed regularisation strategies for machine learning models of discrete field theories,
	
	\item the preservation and detection of highly symmetric solutions in discrete field theories using data-driven architectures based on Palais' principle of symmetric criticality and comparison to data-driven model order reduction based techniques,
	
	\item extension of the approaches first introduced in our conference contribution\cite{DLNNDensity} to different stencils and to degenerate Lagrangians that are linear in velocities.

\end{itemize}

The remainder of the article is structured as follows: \Cref{sec:Background} reviews the concept of variational principles in the continuous and discrete setting. The theory is illustrated on our main examples, the wave equation and the Schrödinger equation. 
\Cref{sec:RegPrepare} provides theoretical considerations motivating the regularisers that we will employ when training our neural network models.  \Cref{sec:LossFuns} introduces the machine learning set-up to learn a discrete field theory from data.
\Cref{sec:PSC} provides theoretical background on the notion of travelling waves and highly symmetric solutions in discrete field theories and describes their variational structure. 
Finally, \cref{sec:Experiments} provides numerical experiments based on the wave and Schrödinger equation.
The article concludes with a summary and future work section (\cref{sec:Summary}).

\section{Continuous and discrete variational principles and field theories}\label{sec:Background}

We recall briefly the concept of variational principles and their discretisations. 
We illustrate the theory on the wave equation and the Schrödinger equation, on which we will test our data-driven framework.
For more detailed expositions, we refer to the literature for an introduction to variational calculus\cite{Olver1986,Basdevant2007,mansfield2010} and discrete mechanics\cite{MarsdenWestVariationalIntegrators}.

\subsection{Continuous variational principles}
Variational principles play a fundamental role in the derivation and analysis of field theories in physics\cite{Basdevant2007}: motions $u\colon X \to \R^d$ are described as stationary points of an action functional 
\begin{equation}\label{eq:ActionFunctionalS}
S(u) = \int_X L\left(x,u(x),u_{x_0}(x),\ldots,u_{x_n}(x)\right) \d x.
\end{equation}
Here the domain of definition of $u$, $X$, refers to a suitable manifold (with boundary) such as $X=[x^0_0,x^N_0] \times \ldots \times [x^0_n,x^N_n]$ with coordinates $x=(x_0,x_1,\ldots,x_n)$.
Often, $X$ represents a space-time domain.
The expression $u_{x_k}$ denotes the partial derivative $\frac{\p u}{\p x_k}$. 
More precisely, $u$ constitutes a solution of the field theory, if the action functional $S$ is stationary at $u$ with respect to variations $\delta u \colon X \to \R^d$ vanishing at the boundary of $X$. The function $L$ is called {\em Lagrangian density}. As it depends on the 1-jet of $u$ only and not on higher derivatives, a field theory described by an action functional of the from \eqref{eq:ActionFunctionalS} is called {\em 1st order field theory}.
Stationary points of the action functional are solutions to the Euler--Lagrange equation
\begin{equation}\label{eq:EL}
0 =\mathrm{EL}(L) = \frac{\p L}{\p u} - \sum_{j=0}^{n} \frac{\d }{\d x_j} \frac{\p L}{\p u_{x_j}},
\end{equation}
which constitutes a partial differential equation of 2nd order, unless $L$ is degenerate.

\begin{Example}[Linear motions]\label{ex:LinearMotions}
Denote $x_0=t$.
For any $\theta \colon \R^d \to \R$ with nowhere degenerate Hesse matrix $\mathrm{Hess} \theta$, the field theory described by the Lagrangian density $L(u,u_t) = \theta(u_t)$
yield the Euler--Lagrange equations \[0=\mathrm{EL}(L)=\mathrm{Hess} \theta(u_t)u_{tt}\]
that is solved by affine linear motions $u_{tt}=0$ in $\R^d$.
\end{Example}

\Cref{ex:LinearMotions} illustrates that Lagrangian densities are not uniquely determined by a system's motions.
The level of ambiguity is higher than the ambiguity known as {\em gauge freedom}, which refers to transformations of the Lagrangians that leave the term $\mathrm{EL}(L)$ invariant (\cref{rem:GaugeFreedom} - see \cref{sec:Remarks}). This observation will become important later when we will learn field theories from data. Ambiguity of Lagrangians and the role of symmetry have been discussed in theoretical physics\cite{HENNEAUX198245,Marmo1987,MARMO1989389}.

We proceed with introducing our main examples of field theories.


\begin{Example}[Wave equation]\label{ex:WaveEQ}
For a potential $V \colon \R \to \R$ the Euler--Lagrange equations to the Lagrangian
\begin{equation}\label{eq:WaveL}
L(u,u_t,u_x) = \frac{1}{2}(u_t^2-u_x^2) - V(u)
\end{equation}
yield the non-linear wave equation
\begin{equation}\label{eq:WaveEQ}
u_{tt}(t,x) - u_{xx}(t,x) + \nabla V(u(t,x)) =0.
\end{equation}
Here $\nabla V$ denotes the gradient of $V$ and $u_{tt} = \frac{\p^2 u}{\p t^2}$, $u_{xx} = \frac{\p^2 u}{\p x^2}$ partial derivatives.
\end{Example}

\begin{Example}[Degenerate Lagrangian]\label{ex:DegenerateL}
Let $J=\begin{pmatrix}0&-I_n\\I_n &0\end{pmatrix}$, where $I_n \in \R^{n \times n}$ is an identity matrix. For a function $H \colon \R^{2n} \to \R$ and the Lagrangian
\begin{equation}\label{eq:DegenerateL}
L(u,u_t) = \frac 12 (J^{-1}u)^\top u_t - H(u)
\end{equation}
the Euler--Lagrange equations are
\begin{equation}\label{eq:HamEQ}
J u_t = \nabla H(u).
\end{equation}
Equation \eqref{eq:HamEQ} are Hamilton's equations on $\R^{2n}$ to the Hamiltonian $H$, with symplectic structure on $\R^{2n}$ defined by $J$.

The Lagrangian \eqref{eq:DegenerateL} is degenerate, since the matrix $\frac{\p^2 L}{\p u_t^2} = 0$ is not regular. Indeed, the Euler--Lagrange equations \eqref{eq:HamEQ} are first order differential equations rather than of second order.
\end{Example}



Using a natural extension of the described variational principle \eqref{eq:ActionFunctionalS} to complex valued fields, we present a variational principle for Schrödinger equation\cite{DERIGLAZOV2009}.

\begin{Example}[Schrödinger equation]\label{ex:SE}
For a potential $V \colon \R \to \R$ the Euler--Lagrange equations to the Lagrangian
\begin{equation}\label{eq:SELagrangian}
	L(\Psi,\Psi_t,\Psi_x) = -\frac{\hbar}{2}(\overline{\Psi}\Psi_t - \Psi \overline{\Psi_t}) - \overline{\Psi_x} \Psi_x - V(\overline{\Psi} \Psi)
\end{equation}
yield the non-linear Schrödinger equation
\begin{equation}\label{eq:SE}
	 \hbar \mathrm{i} \Psi_t = (- \Delta + V'(|\Psi|^2) )\Psi.
\end{equation}
Here $\Psi$ is a complex valued function defined on a space-time domain parametrised by the variables $(t,x)$, $\overline{\Psi}$ its complex conjugation, $\hbar$ is the Plank constant, $\mathrm{i}$ the imaginary unit, and $\Delta = \frac{\p^2}{\p x^2}$ denotes the spatial Laplacian.

Alternatively, with $\Psi = \phi + \mathrm{i} p$, $u=\begin{pmatrix}
\phi, & p\end{pmatrix}^\top$, $J$ as in \cref{ex:DegenerateL}, the Schrödinger equation can be obtained from the a real variational principle with Lagrangian
\begin{equation}\label{eq:SERealL}
L(u,u_t,u_x) = \hbar (J^{-1} u)^\top u_t - \| u_x\|^2 - V(\|u\|^2)
\end{equation}
which yields
\begin{equation}\label{eq:SEReal}
\hbar J u_t = (-\Delta + V'(\|u\|^2)) u,
\end{equation}
which is equivalent to \eqref{eq:SE}.

\end{Example}

\subsection{Discrete variational principles} \label{sec:DiscreteVarPrinciples}


\subsubsection{Set-Up}

Let us introduce discrete actions based on various types of discrete Lagrangian densities. These can be interpreted as approximations to exact discrete Lagrangians (see \cref{sec:ExactLd}).
While we provide a rather general setup and expression of discrete Lagrangian densities in the following, \cref{ex:3ptLd} introduces the types of densities used in our numerical experiments.
For a detailed introduction we refer to the article\cite{Marsden2001} by Marsden, Pekarsky, Shkoller, West.

First we introduce some notation for meshes.
Consider
\begin{itemize}
\item the $n+1$-dimensional cube $X=[x^0_0,x^{N_0}_0] \times \ldots \times [x^0_n,x^{N_n}_n] \subset \R^{n+1}$,
\item 
the mesh $X_d = \{x^0_0,\ldots,x^{N_0}_0\} \times \ldots \times \{x^0_n,\ldots,x^{N_n}_n\} \subset X$,
\item the grid's interior points $\mathring X_d = \{x^1_0,\ldots,x^{N_0-1}_0\} \times \ldots \times \{x^1_n,\ldots,x^{N_n-1}_n\} \subset X_d$.
\end{itemize}
Assume for simplicity that the grid $X_d$ is uniform with spacing $\Delta x_r = (x^{N_r}_r-x^0_r)/N_r$ ($r=0,\ldots,n$). Let $\Delta x = \Delta x_0 \cdot \ldots \cdot \Delta x_n$ denote the discrete volume element.
Consider 
\begin{itemize}
\item the index set $\mathcal{I} = \{0,\ldots,N_0\} \times \ldots \times \{0,\ldots,N_n\}$,

\item the multi-index $l = (l_0,\ldots,l_n) \in \mathcal I$,

 
\item the small cube $X^l=[x_0^{l_0},x_0^{l_0+1}] \times \ldots \times [x_n^{l_n},x_n^{l_n+1}]$ to $l \in {\mathcal I}$ with $l_r < N_r$ for all $0\le r \le n$,

\item and the set of vertices $X_\vrtx^l = \{x_0^{l_0},x_0^{l_0+1}\} \times \ldots \times \{x_n^{l_n},x_n^{l_n+1}\}$ of a small cube $X^l$. Each $X_\vrtx^l$ has $2^{n+1}$ elements.
\end{itemize}

A discrete action functional $S_d$ now assigns real values to functions $U \colon X_d \to \R$ defined on the grid $X_d$: a discrete action functional is given as $S_d \colon (\R^d)^{X_d} \to \R$ with
\begin{equation}\label{eq:DiscreteS1}
S_d(U) = \sum_{l \in \mathcal I, l_r < N_r} L_d(X^l_\vrtx,U(X^l_\vrtx)) \Delta x,
\end{equation}
where $L_d \colon (\R^{n+1})^{2^{n+1}} \times (\R^d)^{2^{n+1}} \to \R$ can be evaluated from the values of $U$ on each vertex set $X^l_\mathrm{vrtx}$. $L_d$ is called discrete Lagrangian density.
In many cases $L_d$ is {\em autonomous}, i.e.\ does not explicitly depend on $X^l_\mathrm{vrtx}$ and is, thus, of the form $L_d(U(X^l_\vrtx))$.
The discrete action $S_d$ of \eqref{eq:DiscreteS1} is a discretised version of \eqref{eq:ActionFunctionalS}.

The discrete Euler--Lagrange equations are obtained by discrete variations fixing the boundary as $\nabla_{U(\mathring X_d)} S_d(U)=0$. This gives for any interior vortex $v \in \mathring{X_d}$ the discrete Euler--Lagrange equation
\begin{equation}\label{eq:DEL1}
\mathrm{DEL}(L_d)_v := \nabla_{U(v)} \sum_{l \in \mathcal{I}_v} L_d(X^l_\vrtx,U(X^l_\vrtx)) =0,
\end{equation}
where $\mathcal{I}_v$ contains all indices $l$ of cubes that contain $v$, i.e.\ $v \in X^l$. The derivative $\nabla_{U(v)}$ describes the derivative with respect to the $u$-variable at the vortex $v$.

\subsubsection{Examples}

\begin{Example}[Linear motions]\label{ex:DiscreteLinearMotions}
Denote $x_0=t$. Let $A \in \R^{d \times d}$ be a symmetric, non-degenerate matrix.
Consider a uniform mesh $X_d=\{t^0,\ldots,t^N\}$ on $[t^0,t^N] \subset \R$. We identify $(\R^d)^{X_d}$ with the space $(\R^d)^{N+1}$.
In analogy to \cref{ex:LinearMotions}, for any $\theta\colon \R^d \to \R$ with nowhere degenerate Hessian 
the discrete Lagrangian
\begin{equation*}
L_d(u^i,u^{i+1}) = \theta(u^{i+1}-u^{i}) 
\end{equation*}
yields discrete Euler--Lagrange equations
\[
0=\mathrm{DEL}(L_d)_{u^i} = \nabla \theta (u^{i} - u^{i-1}) -\nabla \theta (u^{i+1} - u^i)
\]
to which $u^{i+1}=u^i + (u^{i}-u^{i-1})$
is the locally unique solution.
\end{Example}

\Cref{ex:DiscreteLinearMotions} demonstrates that discrete Lagrangian densities are not uniquely determined by a system's motion. 
Depending on the choice of $L_d$, the discrete Euler--Lagrange equations can be easier or more difficult to solve numerically. This will need to be considered when learning a model of $L_d$ from data.
The following example introduces the two main stencils used in our numerical experiments.

\begin{figure}
	\includegraphics[width=0.4\linewidth]{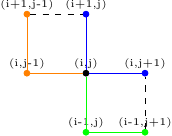}
	\includegraphics[width=0.4\linewidth]{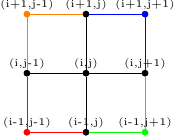}
	\caption{Visualisation of the 7 and 9 point stencil of \cref{ex:3ptLd}. Colours indicate that points are part of the same summand in \eqref{eq:DEL3pt} or \eqref{eq:DEL4pt}. Black vertices are present in several summands.}\label{fig:Stencils}
\end{figure}
\begin{Example}[3 and 4 point discrete Lagrangian]\label{ex:3ptLd}
We denote $x_0 =t$, $x_1 = x$ and identify $\R^{X_d} \cong \R^{(N_0+1) \times (N_1+1)}$ by the space of $(N_0+1) \times (N_1+1)$ matrices with elements $U = (u^i_j)^{0 \le i\le N_0}_{0 \le j\le N_1}$. For a discrete Lagrangian of the form $L_d(u^i_j,u^{i+1}_j,u^i_{j+1})$ (3 input arguments, autonomous) the discrete Euler--Lagrange equations \eqref{eq:DEL1} yield the 7 point stencil
\begin{equation}\label{eq:DEL3pt}
\begin{split}
\mathrm{DEL}(L_d)_j^i
=\frac{\p}{\p u^i_j}
\big(
&\phantom{+}L_d(u^{i}_{j},u^{i+1}_{j},u^{i}_{j+1})\\
&+ L_d(u^{i-1}_{j},u^{i}_{j},u^{i-1}_{j+1})\\
&+ L_d(u^{i}_{j-1},u^{i+1}_{j-1},u^{i}_{j})
\big)
=0,
\end{split}
\end{equation}
denoting $\mathrm{DEL}(L_d)_j^i = \mathrm{DEL}(L_d)_{u^i_j}$.
If $L_d$ is of the form $L_d(u^i_j,u^{i+1}_j,u^i_{j+1},u^{i+1}_{j+1})$ (4 input arguments, autonomous), then the discrete Euler--Lagrange equations yield the 9 point stencil
\begin{equation}\label{eq:DEL4pt}
	\begin{split}
\mathrm{DEL}(L_d)_j^i=
\frac{\p}{\p u^i_j}
		\big(
		&\phantom{+}L_d(u^{i}_{j},u^{i+1}_{j},u^{i}_{j+1},u^{i+1}_{j+1})\\
		&+ L_d(u^{i-1}_{j},u^{i}_{j},u^{i-1}_{j+1},u^{i}_{j+1})\\
		&+ L_d(u^{i}_{j-1},u^{i+1}_{j-1},u^{i}_{j},u^{i+1}_{j})\\
		&+ L_d(u^{i-1}_{j-1},u^{i}_{j-1},u^{i-1}_{j},u^{i}_{j})
		\big)
		=0.
	\end{split}
\end{equation}
The stencils are visualised in \cref{fig:Stencils}
\end{Example}

We proceed by introducing the two main discrete field theories used in the numerical experiments.

\begin{Example}[Discrete 2d wave equation]\label{ex:DiscreteWave}
We denote $x_0 =t$, $x_1 = x$ and identify $\R^{X_d} \cong \R^{(N_0+1) \times (N_1+1)}$ by the space of $(N_0+1) \times (N_1+1)$ matrices with elements $U = (u^i_j)^{0 \le i\le N_0}_{0 \le j\le N_1}$.
Let $L$ be the Lagrangian of \cref{ex:WaveEQ}. Consider the autonomous discrete Lagrangian density
\begin{equation}\label{eq:LdWave2d}
\begin{split}
L_d(u^i_j,u^{i+1}_j,u^{i}_{j+1}) 
= L\left(u^i_j,\frac {u^{i+1}_j - u^i_j}{\Delta t} ,\frac {u^{i}_{j+1} - u^i_j}{\Delta x} \right)\\
= \frac 12 \left( \frac {u^{i+1}_j - u^i_j}{\Delta t} \right)^2 - \frac 12 \left( \frac {u^{i}_{j+1} - u^i_j}{\Delta x} \right)^2 - V(u^i_j).
\end{split}
\end{equation}
The discrete Euler--Lagrange equation \eqref{eq:DEL3pt} is given as
\begin{equation}\label{eq:DELWave2d}
\frac{u^{i-1}_{j}-2u^{i}_{j}+u^{i+1}_{j}}{\Delta t^2}
-\frac{u^{i}_{j-1}-2u^{i}_{j}+u^{i}_{j+1}}{\Delta x^2}
+ \nabla V(u^{i}_{j})=0.
\end{equation}
It can be interpreted as a 2nd order discretisation of \cref{ex:WaveEQ} by finite differences.
While for general $L_d$ the discrete Euler--Lagrange equation \eqref{eq:DEL3pt} constitutes a 7-point stencil, on $L_d$ of \eqref{eq:LdWave2d} it simplifies to a 5-point stencil.
The stencil can be used conveniently to compute forward propagations of solutions as $u^{i+1}_j$ can be computed in terms of $u^i_j$, $u^{i-1}_j$, $u^i_{j+1}$, which all belong to previous time-steps.
\end{Example}


The following example uses a straight forward extension of the discrete variational principle to complex valued fields.

\begin{Example}[Discrete Schrödinger equation]\label{ex:DiscreteSE}
Let $L$ be the Lagrangian \eqref{eq:SELagrangian} of \cref{ex:SE}. Consider
\begin{equation}\label{eq:SELd}
L_d(\Psi^{i-1}_{j-1},\Psi^{i}_{j-1},\Psi^{i-1}_{j},\Psi^{i}_{j})
= L (m^i_j\Psi,\Delta t^i_j\Psi, \Delta x^i_j\Psi)
\end{equation}
with 
\begin{align}
m^i_j\Psi &= \frac{\Psi^{i-1}_{j-1}+\Psi^{i-1}_{j}+\Psi^{i}_{j}+\Psi^{i}_{j-1}}{4}\\
\Delta t^i_j\Psi &= \frac{(\Psi^{i}_{j-1}-\Psi^{i-1}_{j-1})+(\Psi^{i}_{j}-\Psi^{i-1}_{j})}{2 \Delta t}\\
\Delta x^i_j\Psi &= \frac{(\Psi^{i-1}_{j}-\Psi^{i-1}_{j-1})+(\Psi^{i}_{j}-\Psi^{i}_{j-1})}{2 \Delta x}
\end{align}
that approximate the field $\Psi$ and its derivative in $t$ and $x$ in the centre of a square $X^{(i,j)}$ of the gird. Further, consider the following approximations of derivatives at the node $(i,j)$
\begin{align*}
{\D_t}^i_j\Psi &= \frac{(\Psi^{i+1}_{j-1}-\Psi^{i-1}_{j-1})+2(\Psi^{i+1}_{j}-\Psi^{i-1}_{j})+(\Psi^{i+1}_{j+1}-\Psi^{i-1}_{j+1})}{8 \Delta t}\\
{\D_x^2}^i_j\Psi &= \frac 1 {4 \Delta x^2} \Big( (\Psi^{i-1}_{j-1}-2\Psi^{i-1}_{j} + \Psi^{i-1}_{j+1} )\\
&\phantom{= \frac 1 {4 \Delta x^2} \Big(}
+(\Psi^{i}_{j-1}-2\Psi^{i}_{j} + \Psi^{i}_{j+1} ) \\
&\phantom{= \frac 1 {4 \Delta x^2} \Big(}+ (\Psi^{i+1}_{j-1}-2\Psi^{i+1}_{j} + \Psi^{i+1}_{j+1} ) \Big).
\end{align*}
As discrete Euler--Lagrange equations \eqref{eq:DEL4pt} we obtain
\begin{equation}\label{eq:SEDEL}
\begin{split}
\mathrm i \hbar {\D_t}^i_j\Psi
&= - {\D_x^2}^i_j\Psi\\
&+ \frac 14 \Big(
V'(m^{i-1}_{j-1}\Psi)m^{i-1}_{j-1}\Psi
+ V'(m^{i}_{j-1}\Psi)m^{i}_{j-1}\Psi\\
&+\phantom{\frac 14 \Big(} V'(m^{i-1}_{j}\Psi)m^{i-1}_{j}\Psi
+ V'(m^{i}_{j}\Psi)m^{i}_{j}\Psi\Big),
\end{split}
\end{equation}
which is a 9 point stencil.
\end{Example}

\subsubsection{Forward propagation}\label{rem:comp34Ld}


In \cref{ex:3ptLd} assume that boundary conditions $(u_0^i)^{0\le i \le N_0}$, $(u_{N_1}^i)^{0 \le i \le N_0}$, initial data $(u_j^0)_{0 < j < N_1}$, and initial velocities $(\dot u_j^0)_{0 < j < N_1}$ are given.
	Let 
	\begin{equation}\label{eq:LdelXdelT}
		L_{\Delta x }^{\Delta t} (U^i,U^{i+1}) 
		= \sum_{j=0}^{N_1-1} L_d(u^i_j,u_j^{i+1},u^i_{j+1})
	\end{equation}
	or
	\begin{equation}\label{eq:LdelXdelT4}
		L_{\Delta x }^{\Delta t} (U^i,U^{i+1}) = \sum_{j=0}^{N_1-1} L_d(u^i_j,u_j^{i+1},u^i_{j+1},u^{i+1}_{j+1}),
	\end{equation}
	respectively. Here $U^i = (u^i_j)_{0<j<N_1}$ refers to values at interior spatial grid points. Let $L_{\Delta x}(U,\dot U) = L_{\Delta x }^{\Delta t} (U-\Delta t/2 \dot U,U+\Delta t/2 \dot U)$.
	Solutions to the discrete field theory can be computed as follows:
	$U^1$ is computed as the solution to
	\begin{equation}\label{eq:initVelocityProfile}
		\nabla_{\dot U} L_{\Delta x}(U^0,\dot U^0) = - \nabla_{U^1} L_{\Delta x }^{\Delta t} (U^0,U^{1}).
	\end{equation}
	Subsequent $U^i$ are computed as solutions to the DEL for $L_{\Delta x }^{\Delta t}$
	\begin{equation}\label{eq:DELLdxU}
		\nabla_{U^{i}} \left(
		L_{\Delta x}^{\Delta t}(U^{i-1},U^{i})
		+L_{\Delta x}^{\Delta t}(U^{i},U^{i+1})
		\right) =0.
	\end{equation}
	This follows classical variational integration theory\cite{MarsdenWestVariationalIntegrators}.
	We refer to the literature on low-rank approximations and integration schemes\cite{Kieri2016,Lubich2018,WalachThesis,Schrammer2022Thesis} in case fine meshes or higher spacial dimensions are considered and the systems \eqref{eq:initVelocityProfile}, \eqref{eq:DELLdxU} become prohibitively high-dimensional.
	
	In the special case that $L_{\Delta x}(U,\dot U)$ is linear in the velocities $\dot U$, \eqref{eq:initVelocityProfile} is independent of $\dot U^0$ such that no initial velocity profile $\dot U^0$ is needed to obtain $U^1$ via \eqref{eq:initVelocityProfile}. In particular, if $L_{\Delta x}(U,\dot U)$ is of the form \eqref{eq:DegenerateL}, then the computation is numerically stable\cite{Marsden2002DegenerateLagrangians}. Indeed, independence of $\dot U^0$ of the numerical scheme is expected since the EL equations \eqref{eq:HamEQ} are 1st order ordinary differential equations.
	While numerical stability of the scheme is guaranteed for $L_{\Delta x}$ of the form \eqref{eq:DegenerateL}, for more general Lagrangians linear in velocities other techniques are required for numerically stable computations\cite{kraus2017projected,Kraus2018,Burby2022}.

\begin{Remark}\label{rem:useLocalStencil}
Depending on boundary conditions, the solution procedure of \cref{rem:comp34Ld} can often be drastically simplified by using the local stencils \eqref{eq:DEL3pt}, \eqref{eq:DEL4pt} to propagate solutions forward in time, as we made explicit in \cref{ex:DiscreteWave}. Use of the local stencils avoids solving the potentially high dimensional system \eqref{eq:DELLdxU}.
\end{Remark}

\section{Preparation of regularisation strategy}\label{sec:RegPrepare}

In the following, we will train a neural network model of a discrete Lagrangian density $L_d$ such that $L_d$ defines a discrete field theory that is consistent with observations.
Discrete Lagrangian densities are not uniquely determined by the system's motions as illustrated in \cref{ex:DiscreteLinearMotions}. (Also see \cref{rem:gaugeLd34}.)
The secondary goal is to identify a model of $L_d$ that is not only consistent with the true dynamics but is also computationally efficient when numerical methods are used to solve the discrete Euler--Lagrange equations.

To prepare our design of regularisers to be employed in the training phase, we analyse the convergence properties of Newton interations when used to compute solutions to discrete Euler--Lagrange equations for the 3 and 4 point discrete Lagrangians of \cref{ex:3ptLd}.
The following \namecref{prop:SolveDELNewton} refers to the DEL for 3-point Lagrangians \eqref{eq:DEL3pt}.

\begin{Proposition}\label{prop:SolveDELNewton}
	Let $u^{i}_{j}$, $u^{i+1}_{j}$, $u^{i}_{j+1}$, $u^{i-1}_{j}$, $u^{i-1}_{j+1}$, $u^{i}_{j-1}$, $u^{i+1}_{j-1}$ such that \eqref{eq:DEL3pt} holds. Let $\mathcal O\subset \R^d$ be a convex, neighbourhood of $u^\ast=u^{i+1}_{j}$, $\| \cdot \|$ a norm of $\R^d$ inducing an operator norm on $\R^{d \times d}$.
	Define $p(u) :=\frac{\p^2 L_d}{\p u^{i}_{j}\p u}(u^i_j,u,u^i_{j+1})$ and let $\theta$ and $\overline{\theta}$ be Lipschitz constants on $\mathcal O$ for $p$
	and for $\mathrm{inv} \circ p$, respectively, where $\mathrm{inv}$ denotes matrix inversion.
	Let
	\begin{equation}\label{eq:rho}
		\rho^\ast := \left\|\mathrm{inv}(p(u^\ast))\right\|
		= \left\|\left(\frac{\p^2 L_d}{\p u^{i}_{j}\p u^\ast}(u^i_j,u^\ast,u^i_{j+1})\right)^{-1}\right\|
	\end{equation}
	and let $f(u^{(n)})$ denote the left hand side of \eqref{eq:DEL3pt} with $u_j^{i+1}$ replaced by $u^{(n)}$.
	If $\|{u}^{(0)}-{u}^\ast\| \le \min\left(\frac{\rho^\ast}{\overline{\theta}}, \frac{1}{2\theta\rho^\ast}\right)$ for ${u}^{(0)}\in \mathcal{O}$, then the Newton iterations
	${u}^{(n+1)}:= {u}^{(n)} - \mathrm{inv}(p(u^{(n)})) f(u^{(n)})$
	converge quadratically against ${u}^{\ast}$, i.e.\
	\begin{equation}\label{eq:NewtonQuadraticConvergence}
		\|{u}^{(n+1)} - {u}^{\ast} \| \le \rho^\ast \theta \|{u}^{(n)} - {u}^{\ast} \|^2. 
	\end{equation}
	
\end{Proposition}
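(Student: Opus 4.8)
The plan is to recognise that, once the six neighbouring values are frozen, the map $f$ is nothing but the gradient in the single variable $u^{i+1}_{j}$ of a scalar function, so that \eqref{eq:DEL3pt} becomes an ordinary root-finding problem solved by Newton's method. Concretely, only the first summand in \eqref{eq:DEL3pt} depends on $u^{i+1}_{j}$, so $f(w) = \frac{\p L_d}{\p u^{i}_{j}}(u^i_j,w,u^i_{j+1}) + c$ for a vector $c \in \R^d$ independent of $w$. Differentiating in $w$ shows that the Jacobian of $f$ equals $p(w)$, whence the prescribed iteration $u^{(n+1)} = u^{(n)} - \mathrm{inv}(p(u^{(n)}))\,f(u^{(n)})$ is the genuine Newton iteration for $f$, with $f(u^\ast)=0$ by hypothesis.

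First I would set up the standard Newton error recursion. Using $f(u^\ast)=0$ and that the Jacobian of $f$ is $p$, I would write
\[
u^{(n+1)} - u^\ast = \mathrm{inv}(p(u^{(n)}))\Big( p(u^{(n)})(u^{(n)}-u^\ast) - \big(f(u^{(n)}) - f(u^\ast)\big)\Big).
\]
Expressing $f(u^{(n)}) - f(u^\ast)$ through the fundamental theorem of calculus along the segment from $u^\ast$ to $u^{(n)}$, which lies in $\mathcal O$ by convexity, the bracketed term becomes $\int_0^1\big[p(u^{(n)}) - p(u^\ast + t(u^{(n)}-u^\ast))\big](u^{(n)}-u^\ast)\,\d t$. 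Invoking the Lipschitz constant $\theta$ of $p$ and integrating the factor $(1-t)$ gives the remainder estimate $\tfrac{\theta}{2}\|u^{(n)}-u^\ast\|^2$.

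Next I would control the inverse Jacobian. Since $\mathrm{inv}\circ p$ is Lipschitz with constant $\overline\theta$ on $\mathcal O$ and $\|\mathrm{inv}(p(u^\ast))\| = \rho^\ast$, the first smallness hypothesis $\|u^{(n)}-u^\ast\| \le \rho^\ast/\overline\theta$ yields $\|\mathrm{inv}(p(u^{(n)}))\| \le \rho^\ast + \overline\theta\,\|u^{(n)}-u^\ast\| \le 2\rho^\ast$. Combining this with the remainder estimate and the submultiplicativity of the chosen operator norm produces exactly $\|u^{(n+1)}-u^\ast\| \le 2\rho^\ast\cdot\tfrac{\theta}{2}\|u^{(n)}-u^\ast\|^2 = \rho^\ast\theta\,\|u^{(n)}-u^\ast\|^2$, which is \eqref{eq:NewtonQuadraticConvergence}.

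The remaining, and genuinely delicate, step is turning this single-step bound into a statement about the whole sequence. The second hypothesis $\|u^{(0)}-u^\ast\| \le 1/(2\theta\rho^\ast)$ makes $\rho^\ast\theta\,\|u^{(0)}-u^\ast\| \le \tfrac12$, so \eqref{eq:NewtonQuadraticConvergence} gives $\|u^{(1)}-u^\ast\| \le \tfrac12\|u^{(0)}-u^\ast\|$; the error strictly contracts, hence both smallness radii are inherited by $u^{(1)}$ and $u^{(1)}$ stays in $\mathcal O$. Iterating, an induction confirms that every $u^{(n)}$ remains in $\mathcal O$ and satisfies both the quadratic estimate and the halving bound, so the iterates converge quadratically to $u^\ast$. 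I expect the main obstacle to be precisely this bookkeeping, namely checking that the two distinct smallness radii are self-reproducing under the iteration so that the factor $2\rho^\ast$ and the Lipschitz bounds remain valid at every step, rather than any individual analytic estimate, all of which are routine.
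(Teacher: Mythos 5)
Your proposal is correct and follows essentially the same route as the paper's proof in the appendix: both identify $f$ as the stencil map with Jacobian $p$, bound $\|\mathrm{inv}(p(u^{(n)}))\|\le 2\rho^\ast$ via the Lipschitz constant $\overline{\theta}$ and the first smallness radius, derive the quadratic estimate from the integral form of the Newton remainder, and close the argument by induction using the halving of the error guaranteed by the second radius. The only difference is cosmetic (you parametrise the segment from $u^\ast$ and integrate $(1-t)$, the paper parametrises from $u^{(n)}$ and integrates $t$), and your explicit bookkeeping that both radii are self-reproducing is exactly the content of the paper's closing induction.
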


A proof can be found in \cref{sec:ProofConvergence7Stencil}.

\begin{Remark}
\cref{prop:SolveDELNewton} is a restatement of Proposition 1 of our conference contribution\cite{DLNNDensity}, where it is stated without proof.
\end{Remark}

\Cref{prop:SolveDELNewton} clarifies numerical convergence properties when computing solutions on a grid with the stencil visualised to the left of \cref{fig:Stencils}. It applies when values $u_0^i$ at the left boundary and initial values $u_j^1$, $u_j^2$ are known for all indices $i,j$ such that remaining values can be computed subsequently.
However, sometimes it is required to compute all spatial data at a time step at once due to boundary conditions, particularities of the considered stencils or numerical stability issues. This is the setting of the computations mentioned in \cref{rem:comp34Ld}.
To obtain a field theory optimised for such computations, we present the following \namecref{prop:SolveDELNewtonU}.

\begin{Proposition}\label{prop:SolveDELNewtonU}
	Let $U^{i-1}$, $U^{i}$, $U^{i+1} \in (\R^d)^{(N_1-1)}$ such that \eqref{eq:DELLdxU} holds.
	Let $\mathcal O\subset (\R^d)^{(N_1-1)}$ be a convex neighbourhood of $u^\ast=U^{i+1}$, $\| \cdot \|$ a norm on $(\R^d)^{(N_1-1)}$ inducing an operator norm on $(\R^d)^{(N_1-1) \times (N_1-1)}$.
	Define $p(U) :=\frac{\p^2 L_{\Delta x}^{\Delta t}}{\p U^{i}\p U}(U^i,U)$ and let $\theta$ and $\overline{\theta}$ be Lipschitz constants on $\mathcal O$ for $p$
	and for $\mathrm{inv} \circ p$, respectively, where $\mathrm{inv}$ denotes matrix inversion.
	Let
	\begin{equation}\label{eq:rhoU}
		\rho^\ast := \left\|\mathrm{inv}(p(U^\ast))\right\|
		= \left\|\left(\frac{\p^2 L_{\Delta x}^{\Delta t}}{\p U^{i}\p U^\ast}(U^i,U^\ast)\right)^{-1}\right\|
	\end{equation}
	and let $f(U^{(n)})$ denote the left hand side of \eqref{eq:DELLdxU} with $U^{i+1}$ replaced by $U^{(n)}$.
	If $\|{U}^{(0)}-{U}^\ast\| \le \min\left(\frac{\rho^\ast}{\overline{\theta}}, \frac{1}{2\theta\rho^\ast}\right)$ for ${u}^{(0)}\in \mathcal{O}$, then the Newton iterations
	${U}^{(n+1)}:= {U}^{(n)} - \mathrm{inv}(p(U^{(n)})) f(U^{(n)})$
	converge quadratically against ${U}^{\ast}$, i.e.\
	\begin{equation}\label{eq:NewtonQuadraticConvergenceU}
		\|{U}^{(n+1)} - {U}^{\ast} \| \le \rho^\ast \theta \|{U}^{(n)} - {U}^{\ast} \|^2. 
	\end{equation}
	
\end{Proposition}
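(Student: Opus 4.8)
The plan is to observe first that \cref{prop:SolveDELNewtonU} is formally identical to \cref{prop:SolveDELNewton}: only the underlying finite-dimensional space changes from $\R^d$ to $(\R^d)^{(N_1-1)}$, the map whose root is sought is the residual $f$ of \eqref{eq:DELLdxU} rather than of \eqref{eq:DEL3pt}, and the iteration matrix $p$ is the mixed second derivative of $L_{\Delta x}^{\Delta t}$ instead of that of $L_d$. Since the argument of \cref{sec:ProofConvergence7Stencil} never uses the dimension of the space, it transfers verbatim; I would simply reproduce the standard Newton--Kantorovich estimate on the Banach space $(\R^d)^{(N_1-1)}$. The key structural observation is that the iteration matrix equals the Jacobian of $f$: because $f(U) = \nabla_{U^i}\big(L_{\Delta x}^{\Delta t}(U^{i-1},U^i) + L_{\Delta x}^{\Delta t}(U^i,U)\big)$ and only the last summand depends on the unknown $U$, differentiation in $U$ together with symmetry of second derivatives yields $\D f(U) = \frac{\p^2 L_{\Delta x}^{\Delta t}}{\p U^i\,\p U}(U^i,U) = p(U)$. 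Hence the prescribed scheme is genuinely Newton's method, and $f(U^\ast)=0$ holds by hypothesis.

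Next I would set up the error recursion. Writing $e^{(n)} := U^{(n)} - U^\ast$ and inserting $f(U^\ast)=0$,
\[
e^{(n+1)} = p(U^{(n)})^{-1}\Big[p(U^{(n)})\,e^{(n)} - \big(f(U^{(n)}) - f(U^\ast)\big)\Big].
\]
Applying the fundamental theorem of calculus to $f$ along the segment from $U^\ast$ to $U^{(n)}$, which lies in the convex set $\mathcal O$, and using $\D f = p$, the bracketed term equals $\int_0^1\big(p(U^{(n)}) - p(U^\ast + t\,e^{(n)})\big)\,e^{(n)}\,\d t$. The $\theta$-Lipschitz continuity of $p$ bounds the integrand pointwise by $\theta(1-t)\|e^{(n)}\|^2$, so the bracket has norm at most $\tfrac{\theta}{2}\|e^{(n)}\|^2$.

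It then remains to control $\|p(U^{(n)})^{-1}\| = \|\mathrm{inv}(p(U^{(n)}))\|$. The $\overline\theta$-Lipschitz continuity of $\mathrm{inv}\circ p$ and the definition \eqref{eq:rhoU} of $\rho^\ast$ give $\|\mathrm{inv}(p(U^{(n)}))\| \le \rho^\ast + \overline\theta\,\|e^{(n)}\|$, which the hypothesis $\|e^{(n)}\| \le \rho^\ast/\overline\theta$ reduces to $2\rho^\ast$. Multiplying the two bounds yields $\|e^{(n+1)}\| \le 2\rho^\ast\cdot\tfrac{\theta}{2}\|e^{(n)}\|^2 = \rho^\ast\theta\,\|e^{(n)}\|^2$, which is exactly \eqref{eq:NewtonQuadraticConvergenceU}.

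Finally the estimate must be closed by induction. The second smallness hypothesis $\|e^{(0)}\| \le 1/(2\theta\rho^\ast)$ combined with the quadratic bound gives $\|e^{(n+1)}\| \le \tfrac12\|e^{(n)}\|$, so every iterate remains in $\mathcal O$ and within the ball on which both $\|\mathrm{inv}(p)\| \le 2\rho^\ast$ and the quadratic estimate stay valid. The one point demanding genuine care -- and thus the main obstacle -- is precisely this bookkeeping: one must verify that both smallness conditions on $\|e^{(n)}\|$ propagate simultaneously from step $n$ to step $n+1$, so that the per-step inequalities may be chained legitimately. The analytic inequalities themselves are entirely routine.
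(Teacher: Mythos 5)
Your proposal is correct and follows essentially the same route as the paper: the paper's proof of this proposition is literally the remark that it is analogous to \cref{prop:SolveDELNewton}, whose appendix proof is exactly the standard Newton estimate you reproduce (identify $\D f = p$, bound $\|\mathrm{inv}(p(U^{(n)}))\|\le 2\rho^\ast$ via the $\overline\theta$-Lipschitz bound, get the factor $\theta/2$ from the integral form of the mean value theorem, and close by induction using $\|e^{(n+1)}\|\le\tfrac12\|e^{(n)}\|$). The only cosmetic difference is the orientation of the parametrisation of the segment in the fundamental-theorem step, which changes $\int_0^1 t\,\d t$ into $\int_0^1(1-t)\,\d t$ and is immaterial.
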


\begin{proof}
The proof is analogous to the proof of \cref{prop:SolveDELNewton}, which can be found in \cref{sec:ProofConvergence7Stencil}.
\end{proof}

For computational details of how to approximate $\rho^\ast$ numerically, see \cref{sec:ComputeRhoAst}.

\section{Loss function and regulariser for training discrete Lagrangian densities}\label{sec:LossFuns}

Assume we are given observations $U^{(k)} \colon X_d \to \R^d$ $k=1,\ldots,K$ of solutions to an unknown discrete field theory and the goal is to identify a model of the discrete field theory.
We pick a type of a discrete Lagrangian density $L_d$ such as the 3 or 4 points discrete Lagrangians introduced in \cref{ex:3ptLd} or any discretisation of \eqref{eq:ExactLd} by a linear quadrature formula. The discrete Lagrangian $L_d$ is modelled as a neural network whose parameters are optimised such that
\begin{enumerate}
	
\item the discrete Euler--Lagrange equations \eqref{eq:DEL1} are consistent with the observations,

\item computations based on \eqref{eq:DEL1} are numerically well conditioned.
	
\end{enumerate}

For this, we consider a loss function $\ell = \ell_{\mathrm{data}} + w\ell_{\mathrm{reg}}$ consisting of the data consistency term $\ell_{\mathrm{data}}$ and the regulariser $\ell_{\mathrm{reg}}$ and a weight $w \in \R_+$. These are specified in the following.

\subsection{Data consistency}

We introduce the data consistency term
\begin{equation}\label{eq:elldata}
\ell_{\mathrm{data}} = \sum_{k=1}^{K}\sum_{v} |\mathrm{DEL}(L_d)_v(U^{(k)})|^2.
\end{equation}
In the inner sum, the sum is taken over all vertices for which the summand is defined.
The term $\ell_{\mathrm{data}}$ measures how consistent the model $L_d$ is with the training data. 

\begin{Remark}[Partial observations and batch learning]
The computation of a summand $|\mathrm{DEL}(L_d)_v(U^{(k)})|^2$ in \eqref{eq:elldata} does not involve the full observation $U^{(k)}(X_d)$ but only the observation on a stencil such as the ones illustrated in \cref{fig:Stencils}. 
Therefore, partial observations can be incorporated in $\ell_{\mathrm{data}}$. Moreover, when batch learning\cite{MarslandMachineLearning} is applied, batches can contain tuples of stencil data coming from different observed solutions. This can be expected to prevent over-fitting phenomena.
\end{Remark}

\begin{Remark}[Low dimensionality]
Although solutions to the discrete field theory can require solving high dimensional systems such as $|\mathrm{DEL}(L_{\Delta x}^{\Delta t})^i(U)|^2=0$ in \eqref{eq:DELLdxU}, in the training phase to compute $\ell_{\mathrm{data}}$ each summand only requires evaluating the low-dimensional Euler--Lagrange equations \eqref{eq:DEL1}.
\end{Remark}

\begin{Remark}[Locality]
In the setting of the article, we assume that there exists a continuous field theory underlying our observational data $U^{(k)}$ on a mesh $X_d$ (spacetime lattice). The underlying continuous theory can be described by a partial differential equation \eqref{eq:EL} and whether a function $u \colon X \to \R$ fulfils the differential equation can be checked point-wise, i.e.\ it is a local condition. This locality is reflected in our discrete model by the locality of the stencil defining the term $\mathrm{DEL}(L_d)_v(U^{(k)})$. In this sense, the data-driven model will be local in space and time by construction.
In contrast, if in the setting of a space-time lattice $X=[t^0,t^N] \times [x^0,x^N]$ we were to learn $L_{\Delta x}^{\Delta t}$ (see \eqref{eq:LdelXdelT4}) rather than the discrete Lagrangian density $L_d$, potentially after model order reduction in the spatial directions, then locality would only be guaranteed in the time-direction.
\end{Remark}

\subsection{Regulariser}

Since any constant function $L_d$ constitutes a minimiser of $\ell_{\mathrm{data}}$, a carefully chosen regulariser $\ell_{\mathrm{reg}}$ is required to obtain a regular discrete field theory.
Indeed, as seen in \cref{ex:DiscreteLinearMotions} much freedom in the choice of $L_d$ can be expected.
In view of \namecref{prop:SolveDELNewton} \labelcref{prop:SolveDELNewton,prop:SolveDELNewtonU}, we consider regularisers that aim to minimise $\rho^\ast$ of \eqref{eq:NewtonQuadraticConvergence} or \eqref{eq:NewtonQuadraticConvergenceU}, respectively. If successfully optimised, we can expect solvability and rapid convergence of Newton iterations when computing solutions to the field theory, at least for initial data close to training data.

A regulariser $\ell_{\mathrm{reg}}$ for the 3-point discrete Lagrangian of \cref{ex:3ptLd} is given by
\begin{equation}\label{eq:regLd3pt}
\ell_{\mathrm{reg}}=\alpha \sum_{k=1}^{K}\sum_{i,j} \left\|\left(\frac{\p^2 L_d}{\p u^{i}_{j}\p u^{i+1}_j}({u^i_j}^{(k)},{u^{i+1}_j}^{(k)},{u^i_{j+1}}^{(k)})\right)^{-1}\right\|^2
\end{equation}
with normalisation $\alpha =  \frac{1}{KN_0 N_1}$ by the number of summands. In the summation in \eqref{eq:regLd3pt} the indices $i,j$ run over all interior mesh points.
The expression is motivated by \cref{prop:SolveDELNewton}:
each summand corresponds to $(\rho^\ast)^2$ of the \namecref{prop:SolveDELNewton} which is related to the convergence speed of Newton iterations. We obtain the bound $\rho^\ast < \sqrt{\ell_{\mathrm{reg}}}$.
The resulting field theory will be optimised for propagating solutions forward in space and time using the 7-point stencil \cref{eq:DEL3pt} visualised to the left of \cref{fig:Stencils}.

To obtain a field theory based on the 3- or 4-point discrete Lagrangian of \cref{ex:3ptLd} which is optimised for forward propagation of solutions as described in \cref{rem:comp34Ld}, we consider the regularisation
\begin{equation}\label{eq:regLdxdt}
\ell_{\mathrm{reg}} 
=  \frac{1}{K N_0} \sum_{k=1}^{K}\sum_{i=1}^{N_0}  \left\|{\Lambda_i^{(k)}}^{-1}\right\|^2,
\end{equation}
with
\begin{equation}
\Lambda^{(k)}_i =
\frac{\p^2 L_{\Delta x}^{\Delta t}}{\p U^{i}\p U^{i+1}}({U^i}^{(k)},{U^{i+1}}^{(k)})
\end{equation}
The expression is based on \cref{prop:SolveDELNewtonU}. Here ${U^i}^{(k)} = ({u^i_j}^{(k)})_{0< j < N_1}$.
See \cref{rem:periodicBD} and above for a description of $\Lambda^{(k)}$.
In practise, we found that the tamed version
\begin{equation}\label{eq:regLdxdtTamed}
\ell_{\mathrm{reg}}
=  \frac{1}{K N_0}\sum_{k=1}^{K}\sum_{i=1}^{N_0} \mathrm{relu}\left( -10 \left\|{\Lambda^{(k)}}^{-1}\right\|^{-2} +1\right)\le 1
\end{equation}
is more suitable for training. Here $\mathrm{relu}(x) = \max(0,x)$ is the rectified linear unit function. For numerical computation of  $\left\|{\Lambda^{(k)}}^{-1}\right\|^{-1}$ see \cref{rem:ApproxRho}.
The regularisers \eqref{eq:regLdxdt} and \eqref{eq:regLdxdtTamed} both get large when the singular values of $\Lambda^{(k)}$ become small on the training data. However, \eqref{eq:regLdxdtTamed} is tamed in the sense that the summands take values in $[0,1)$ and are zero when the smallest singular values of $\Lambda^{(k)}$ is bigger than or equal to $0.1$. On the training data set, $\rho^\ast$ of \cref{prop:SolveDELNewtonU} is bounded by $\rho^\ast \le \sqrt{10(1-\ell_{\mathrm{reg}})^{-1}}$.
 \Cref{fig:RegulariserTaming} shows the behaviour of the summands of various regularisers in terms of the square of the smallest singular value $\sigma^2$.

\begin{figure}
	\includegraphics[width=0.32\linewidth]{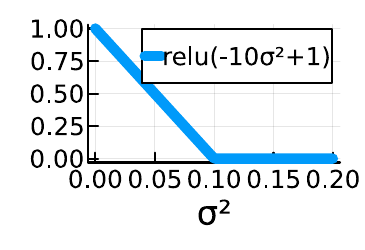}
	\includegraphics[width=0.32\linewidth]{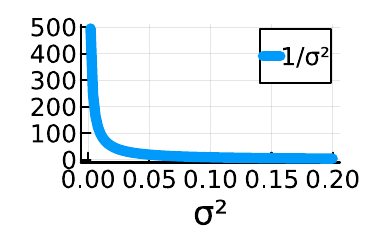}
	\includegraphics[width=0.32\linewidth]{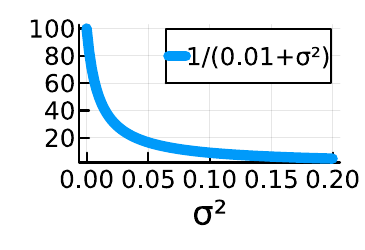}
	\caption{Behaviour of summands in regularisers $\ell_{\mathrm{reg}}$ of \eqref{eq:regLd3pt}, \eqref{eq:regLdxdt}, \eqref{eq:regLdxdtTamed}, and $\ell_{\mathrm{reg}}^{\mathrm{illustration}}$. }\label{fig:RegulariserTaming}
\end{figure}

\begin{figure}
	\includegraphics[width=0.45\linewidth]{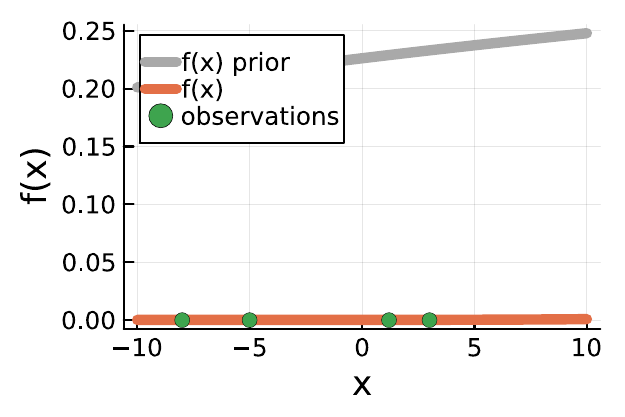}
	\includegraphics[width=0.45\linewidth]{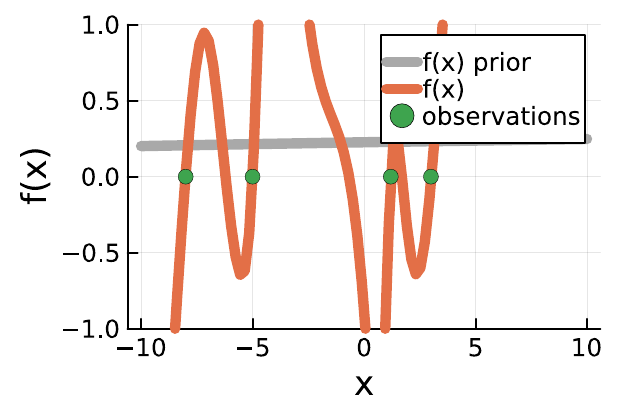}
	\caption{Illustration of degeneracy of roots when learning without (left) or with (right) regularising terms. The orange curve refers to the trained model, the grey curve to the untrained model of $f$.
	The regulariser makes sure that the roots are non-degenerate and numerical root finding is well-conditioned.	
	}\label{fig:RegulariserIllustration}
\end{figure}
\begin{figure}
	\includegraphics[width=0.7\linewidth]{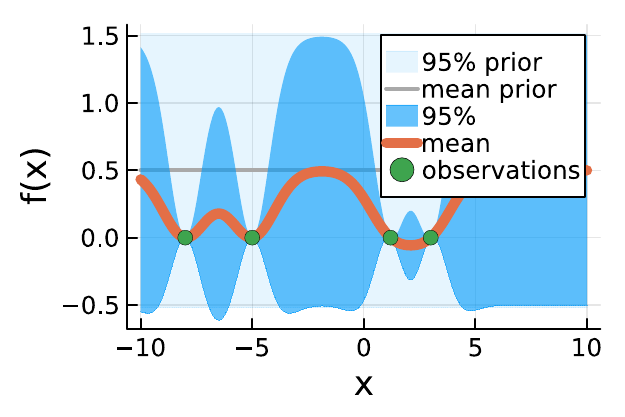}
	\caption{As comparison to \cref{fig:RegulariserIllustration} (left), a posterior distribution for a Gaussian Processes for the same observations is computed without regularisation. The problem of degenerate roots of the mean is less pronounced but does occur for sparse training data.}\label{fig:RegulariserIllustrationGP}
\end{figure}
For a better intuition of the action of the considered regularisers, \cref{fig:RegulariserIllustration} shows the effect of a regulariser in a simplified learning task: a neural network model of a function $f\colon \R \to \R$ is fitted to observations $f(x)=0$ for $x \in R=\{-8,-5,1.2,3\}$. While a model trained with the mean squared error $\ell_{\mathrm{data}}^{\mathrm{illustration}}= \sum_{x \in R} f(x)^2$ is almost flat after $1100$ optimisation steps with {\tt adam}\cite{Adam} (left of \cref{fig:RegulariserIllustration}), the presence of the regulariser $\ell_{\mathrm{reg}}^{\mathrm{illustration}} = \sum_{x \in R}(0.01+f'(x)^2)^{-1}$ drives the model of $f$ to a representation which has nondegenerate roots at $R$ that are easy to find using Newton iterations (right of \cref{fig:RegulariserIllustration}). Indeed, the loss of the regulariser $\ell_{\mathrm{reg}}^{\mathrm{illustration}} < 0.855$ guarantees that $f'(x) > 3.38$ for $x \in R$ and thus the regularity of the roots problem for $f$ close to $R$ is guaranteed.

\begin{Remark}[Comparison with Gaussian Processes]
When Gaussian Processes are used instead of neural networks, the problem of degenerate roots are not as pronounced: \cref{fig:RegulariserIllustrationGP} shows the posterior distribution and its mean of a Gaussian Process for observations $f(x)=0$, $x \in R$ (constant prior $m_0(x)=0.5$, squared exponential kernel with unit length scale and unit standard deviation).
The roots of the mean function at $1.2$ and $3$ are non-degenerate, while roots at $-8$ and $-5$ appear tangential.
This behaviour is quite distinct from that of neural networks \cref{fig:RegulariserIllustration} (left). We conclude that each learning approaches requires adapted regularisation. 
We have developed regularisation strategies for Lagrangians modelled as Gaussian Processes in previous work\cite{LagrangianShadowIntegrators}.
\end{Remark}

\section{Variational principles for travelling waves in discrete and continuous theories}\label{sec:PSC}

This section discusses the notion of travelling waves in discrete field theories. As reference solutions for our numerical experiments in \cref{sec:Experiments}, we report analytic expressions for travelling waves of the discrete wave equation and discrete Schrödinger equation. 
Further, a central motivation for learning (discrete) field theories by learning a (discrete) Lagrangian density is the presence of variational structure of the learned model. We invoke Palais' principle of symmetric criticality\cite{palais1979}. It implies that variational structure is passed on to the governing equations of motions of travelling waves if the discrete Lagrangian is autonomous. In the broader context of learning field theories and trying to predict highly symmetric solutions with the trained model, this is used as a motivation for learning discrete Lagrangian densities with the correct invariances.
Readers mainly interested in the machine learning architecture may skip to the numerical experiments (\cref{sec:Experiments}) directly.

Highly symmetric solutions $u$ of a continuous field theory, such as travelling waves, are of special interest due to their simple structure.
When the same dynamical system is described by a discrete field theory, however, the values $u(X_d)$ of $u$ over the mesh $X_d$ might not have symmetry properties unless the symmetries of $u$ happen to align with the symmetries of the mesh $X_d$. For instance, let $X=\R \times [0,b]/\sim$. Here $[0,b]/\sim$ denotes the identification of endpoints $0 \sim b$ of an interval $[0,b]$ (periodic boundary conditions).
Functions $u \colon X \to \R$ with $u(t,x)=u(t+s,x+cs)$ for all $s \in \R$ are of the form $u(t,x)=f(x-ct)$ for $f \colon [0,b]/\sim \to \R$ (travelling wave with wave speed $c \in \R$).
Consider a rectangular mesh $X_d \subset X$ with mesh widths $\Delta t$, $\Delta x >0$. If and only if $\frac{c \Delta t}{\Delta x} \in \mathbb{Q}$, the values of the mesh $u(X_d)$ are periodic in time and $u(X_d)$ is a discrete travelling wave, i.e.\ $u^i_j = u^{i+z k_0}_{j+z k_1}$ for all $z \in \Z$, where $\frac{c \Delta t}{\Delta x} = \frac{k_0}{k_1}$ with $k_0 \in \Z$, $k_1 \in \N$.
The theory of lattice differential equations\cite{MalletParet2003,Hupkes2022} provides a way to treat $u(X_d)$ as a travelling wave even if $\frac{c \Delta t}{\Delta x} \not \in \mathbb{Q}$.

To design machine learning architectures for discrete field theories that effectively capture solutions corresponding to travelling waves in the continuous theory, we pay close attention to invariances of the field theory that relate to structural properties of the dynamical system that governs travelling waves.
In particular, autonomy of a field theory relates to the fact that travelling waves themselves are governed by a variational principle. This is made precise in the following \namecref{prop:PalaisTW}.

\begin{Proposition}[Symmetric criticality for travelling waves]\label{prop:PalaisTW}
Let $c \in \R \setminus \{0\}$, $b>0$, $X=([0,b/c]/\sim) \times ([0,b]/\sim)$ be a torus (periodic boundary conditions), $W=C^1(X,\R^d)$ continuously differentiable functions and $\Sigma = \{u \in W \,|\, u(t+s,x+cs)=u(t,x)\}$ be travelling waves with wave speed $c$.
Let $S \colon W \to \R$
\begin{equation}\label{eq:STWPalais}
S(u)=\int_0^{b/c}\int_0^b L(u(t,x),u_t(t,x),u_x(t,x)) \d x \d t
\end{equation}
be a continuously differentiable action functional with an autonomous Lagrangian $L \colon (\R^d)^3 \to \R$. 
Denote the restriction of $S$ to $\Sigma$ by $S_\Sigma \colon \Sigma \to \R$.
Then a travelling wave $u \in \Sigma$ is a stationary point of $S$ if and only if $u$ is a stationary point of $S_\Sigma$. Using the identification $\Sigma \cong C^1([0,b]/\sim,\R^d)$, $u(t,x)=f(x-ct)=f(\xi)$ for $u \in \Sigma$, we have
\begin{equation}\label{eq:SSigmaL}
S_\Sigma(f) = \frac bc \int_0^b L(f(\xi),-c f_\xi(\xi),f_\xi(\xi)) \d \xi.
\end{equation}
\end{Proposition}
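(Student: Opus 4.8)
The plan is to recognise the statement as an instance of Palais' principle of symmetric criticality for a \emph{compact} symmetry group, and then to read off the closed formula \eqref{eq:SSigmaL} by a change of variables. First I would exhibit the relevant symmetry. For $s \in \R$ define $\Phi_s \colon W \to W$ by $(\Phi_s u)(t,x) = u(t+s,x+cs)$, i.e.\ translation along the characteristic direction $(1,c)$; this is precomposition with the torus translation $\tau_s(t,x)=(t+s,x+cs)$, which has $\D\tau_s = I$, so $\Phi_s$ is a well-defined \emph{linear isometry} of $W$ in the $C^1$-norm. Because $u$ is $\tfrac bc$-periodic in $t$ and $b$-periodic in $x$, one checks $\Phi_{s+b/c}=\Phi_s$, so the action descends to the circle $G=\R/\tfrac bc\Z\cong S^1$, and its fixed-point set is exactly $W^G=\Sigma$, a closed linear subspace of $W$.

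Next I would verify that $S$ is $G$-invariant. Since $(\Phi_s u)_t(t,x)=u_t(t+s,x+cs)$ and likewise for $u_x$, and since $L$ is \emph{autonomous}, the substitution $(t,x)\mapsto(t+s,x+cs)$ has unit Jacobian and maps the torus onto itself, giving $S(\Phi_s u)=S(u)$ for all $s$. Together with compactness of $G$ this is precisely the hypothesis of Palais' principle\cite{palais1979}: a point $u\in\Sigma=W^G$ is a stationary point of $S$ if and only if it is a stationary point of $S_\Sigma=S|_\Sigma$, which is the first claim.

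If I want a self-contained argument rather than merely citing the principle, I would run the averaging proof, which is clean here because $G$ is a circle. The forward implication is just restriction. For the converse, let $u\in\Sigma$ be stationary for $S_\Sigma$; differentiating $S\circ\Phi_s = S$ at the fixed point $u$ yields the $G$-invariance $\D S(u)\cdot(\Phi_s w)=\D S(u)\cdot w$ for every variation $w\in W$. Averaging over $G$ against normalised Haar measure gives $\D S(u)\cdot w = \D S(u)\cdot(Pw)$, where $Pw=\int_G \Phi_g w\,\d g$ is the projection of $w$ onto $\Sigma$. Since $Pw\in\Sigma=T_u\Sigma$ and $u$ is stationary for $S_\Sigma$, the right-hand side vanishes, so $\D S(u)=0$ on all of $W$, i.e.\ $u$ is stationary for $S$.

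Finally I would derive \eqref{eq:SSigmaL}: writing $u(t,x)=f(\xi)$ with $\xi=x-ct$ gives $u_t=-cf_\xi$ and $u_x=f_\xi$, so the integrand becomes $L(f(\xi),-cf_\xi(\xi),f_\xi(\xi))$; for each fixed $t$ the substitution $\xi=x-ct$ together with $b$-periodicity of $f$ renders the inner $x$-integral independent of $t$, and the outer $t$-integral over $[0,b/c]$ contributes the prefactor $\tfrac bc$. The main obstacle is not the formula but making the averaging step rigorous in the infinite-dimensional setting: one must justify pulling the bounded functional $\D S(u)$ inside the integral $Pw=\int_G\Phi_g w\,\d g$ and checking that this integral converges in $W$ and lands in $\Sigma$. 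This is exactly where compactness of $G$, strong continuity of the isometric action $s\mapsto\Phi_s w$ on $C^1(X,\R^d)$, and the $C^1$-regularity of $S$ (all available under the stated hypotheses) are used, so invoking the cited result\cite{palais1979} sidesteps the remaining functional-analytic bookkeeping.
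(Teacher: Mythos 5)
Your proposal is correct and follows essentially the same route as the paper's proof: exhibit the compact circle action $s.u(t,x)=u(t+s,x+cs)$ on the Banach space $W$ with fixed-point set $\Sigma$, verify invariance of $S$ using autonomy of $L$ and the unit-Jacobian substitution, invoke Palais' principle of symmetric criticality, and obtain \eqref{eq:SSigmaL} by a change of variables (you substitute $\xi=x-ct$ slice-wise, the paper rotates to $(\xi,\theta)=(x-ct,x+ct)$ coordinates — both yield the factor $b/c$). Your additional sketch of the Haar-averaging proof of the principle itself goes beyond the paper, which simply cites Palais at that point.
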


\begin{Remark}\label{rem:PSCStandingTW}
The statement of \cref{prop:PalaisTW} holds true for standing travelling waves ($c=0$), when the fraction $b/c$ in the expression of the domain $X$ as well as in \eqref{eq:STWPalais} and in \eqref{eq:SSigmaL} are substituted by 1. To include the case $c=0$ we will omit the irrelevant factor $b/c$ when reporting $S_\Sigma$.
\end{Remark}

Notice that for $u \in \Sigma$ to be a stationary point of $S_\Sigma$, the action needs to be stationary at $u$ with respect to variations through travelling waves. To be a stationary point of $S$, however, $u$ needs to be stationary with respect to all variations $\delta u \colon X \to \R^d$.
The \namecref{prop:PalaisTW} says that stationarity of $S_\Sigma$ is already sufficient to conclude stationarity of $S$.

The proof checks invariance of $S(u)$ under the transformation $u \mapsto s.u$ with $s.u(t,x)=u(t+s,x+cs)$ and verifies the conditions of Palais' principle of symmetric criticality (PSC)\cite{palais1979,Torre2011}. A proof is contained in \cref{sec:ProofPalaisTW}.

In particular, \cref{prop:PalaisTW} says that travelling waves are again governed by a variational principle, namely by the restricted functional $S_\Sigma \colon \Sigma \to \R$. We illustrate this on the wave equation and the Schrödinger equation.

\begin{Example}[PSC for wave equation]\label{ex:PSCWave}
Consider the wave equation (\cref{ex:WaveEQ}) with periodic boundary conditions. Let $\xi = x-ct$. 
By \cref{prop:PalaisTW}, travelling wave solutions $u(t,x)=f(x-ct)$ of \eqref{eq:WaveEQ} exactly correspond to the stationary points of $S_\Sigma(f) \colon [0,b]/\sim \to \R$ with
\begin{equation}\label{eq:SPSCWave}
\begin{split}
S_\Sigma = \int_{0}^b \left(\frac 12(c^2-1)f_\xi^2-V(f) \right) \d \xi.
\end{split}
\end{equation}

Although \cref{prop:PalaisTW} and \cref{rem:PSCStandingTW} guarantee variational structure of travelling wave solutions for any $c \in \R$, they do not imply existence of non-trivial solutions: for $V(f)=\frac 12 f^2$ a Fourier ansatz in the Euler--Lagrange equation $(1-c^2)f_{\xi \xi }=\nabla V(f)$ reveals the condition
\[
c^2 = 1+\frac{l^2}{4 \pi^2 m^2}, \quad m \in \Z \setminus \{0\}
\]
with solutions $u(t,x)= \alpha_1 \sin (\kappa(x-ct))+\alpha_2 \cos(\kappa(x-ct))$,
where $\kappa = 2\pi m/b$, $\alpha_1,\alpha_2 \in \R$.
\end{Example}

\begin{Example}[PSC in Schrödinger equation]
In analogy to \cref{ex:PSCWave}, travelling waves $\Psi(t,x)=f(x-ct) = f(\xi) \in \mathbb{C}$ with wave speed $c \in \R$ in the Schrödinger equation (\cref{ex:SE}) with periodic boundary conditions are governed by the variational principle $S_\Sigma \colon [0,b]/\sim \to \R$ with $S_\Sigma(f) = \int_0^b L_\Sigma (f,f_\xi) \d \xi$ with
\begin{equation}
L_\Sigma (f,f_\xi) = \frac{c \hbar}{2}(\overline{f}f_\xi - f \overline{f_\xi})-f_\xi \overline{f_\xi}-V(f\overline{f}).
\end{equation}
A Fourier ansatz for $f$ reveals that for any $m \in \Z \setminus \{0\}$, $\alpha \in \R$ there is a travelling wave $\Psi(t,x)=f(\xi)=\alpha e^{\mathrm i \frac{2 \pi m}{b} \xi}$ with $\xi = x-ct$ and
\begin{equation}\label{eq:WaveSpeedSE}
c = \frac{2 \pi m}{\hbar b} - \frac{b}{2 \pi \hbar}\nabla V(\alpha^2).
\end{equation}
\end{Example}

Let us now obtain corresponding statements for discrete field theories.
To make sense of travelling waves in discrete equations such as \eqref{eq:DEL3pt} and \eqref{eq:DEL4pt}, standard approaches in lattice differential equations\cite{MalletParet2003,Hupkes2022} replace discrete equations by functional equations. We recover this interpretation from a variational perspective.

\begin{Lemma}[Continuous action for discrete field theory]
Let $X = ([0,b_0]/\sim)  \times ([0,b_1]/\sim) $ be a torus and $L_d$ the 3-point discrete Lagrangian density of \cref{ex:3ptLd}. Let $\Delta t, \Delta x >0$. 
Consider the action $S \colon X \to \R$, 
\begin{equation}\label{eq:SLdCont}
S(u)=\int_X L_d(u(t,x),u(t+\Delta t,x),u(t,x+\Delta x))\d x \d t.
\end{equation}
Stationary points $u \colon X \to \R^d$ are the solutions of the functional equation
\begin{equation}\label{eq:DEL3ptFunc}
	\begin{split}
		\frac{\p}{\p u}
		\big(
		&\phantom{+}L_d(u(t,x),u(t+\Delta t,x),u(t,x+\Delta x))\\
		&+ L_d(u(t-\Delta t,x),u(t,x),u(t-\Delta t,x+\Delta x))\\
		&+ L_d(u(t,x-\Delta x),u(t+\Delta t,x-\Delta x),u(t,x))
		\big)
		=0.
	\end{split}
\end{equation}
\end{Lemma}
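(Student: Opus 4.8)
The plan is to compute the first variation of the functional $S$ in \eqref{eq:SLdCont} directly and read off the Euler--Lagrange equation. Since $X$ is a torus (a closed manifold without boundary), there are no boundary conditions to impose, so I would take an arbitrary variation $u_\epsilon = u + \epsilon \eta$ with $\eta \in C^1(X,\R^d)$ and differentiate $S(u_\epsilon)$ at $\epsilon = 0$. Writing $L_{d,k}$ for the partial derivative of $L_d$ with respect to its $k$-th ($\R^d$-valued) argument and abbreviating the evaluation point $P(t,x) = (u(t,x), u(t+\Delta t, x), u(t, x+\Delta x))$, the chain rule together with differentiation under the integral sign (justified by $C^1$-regularity of $L_d$ and $u$ and compactness of $X$) gives
\begin{equation*}
\delta S(u)[\eta] = \int_X \big( L_{d,1}(P)\,\eta(t,x) + L_{d,2}(P)\,\eta(t+\Delta t, x) + L_{d,3}(P)\,\eta(t, x+\Delta x)\big)\,\d x\,\d t.
\end{equation*}

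The key step is to collect the three terms onto a common test function $\eta(t,x)$. Because $X$ is a torus, translation by $\Delta t$ in the $t$-variable and by $\Delta x$ in the $x$-variable are measure-preserving diffeomorphisms of $X$ onto itself, so the integral is invariant under the substitutions $t \mapsto t - \Delta t$ in the second term and $x \mapsto x - \Delta x$ in the third term; crucially, periodicity is exactly what ensures that no boundary contributions arise. After these substitutions the arguments of $L_{d,2}$ become $(u(t-\Delta t, x), u(t,x), u(t-\Delta t, x+\Delta x))$ and those of $L_{d,3}$ become $(u(t, x-\Delta x), u(t+\Delta t, x-\Delta x), u(t,x))$, while every term now multiplies $\eta(t,x)$. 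Hence
\begin{equation*}
\delta S(u)[\eta] = \int_X g(t,x)\,\eta(t,x)\,\d x\,\d t,
\end{equation*}
where $g(t,x)$ is the sum of the three partial derivatives just described.

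Since $\eta$ is an arbitrary $C^1$ variation, the fundamental lemma of the calculus of variations forces $g \equiv 0$, which is precisely \eqref{eq:DEL3ptFunc}: one checks that $g(t,x)$ equals $\frac{\p}{\p u}$ applied to the three-term sum in \eqref{eq:DEL3ptFunc}, because the common point $u(t,x)$ enters that sum as the first argument of the first summand, the second argument of the second summand, and the third argument of the third summand, matching $L_{d,1}$, $L_{d,2}$ and $L_{d,3}$ respectively. This mirrors the discrete computation behind the 7-point stencil \eqref{eq:DEL3pt}, with the three summands corresponding to the three small cubes meeting at the vertex $(t,x)$.

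The only genuine subtlety, and the step I would treat most carefully, is the translation invariance of the integral: it is the periodic (torus) structure that removes boundary terms and lets the three shifted contributions be recombined at a single point. Everything else, namely the differentiation under the integral sign and the application of the fundamental lemma, is routine given the assumed $C^1$-regularity.
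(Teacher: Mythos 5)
Your proof is correct and is exactly the ``standard variational calculus'' argument that the paper invokes without writing out: vary $S$, shift the second and third terms by $-\Delta t$ and $-\Delta x$ respectively using the translation invariance of integration over the torus (which removes any boundary contributions), collect everything against $\eta(t,x)$, and apply the fundamental lemma to obtain \eqref{eq:DEL3ptFunc}. No gaps; the identification of the resulting sum with the three summands of the functional equation (first, second, and third argument slots) is spelled out correctly.
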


Notice that \eqref{eq:DEL3ptFunc} is \eqref{eq:DEL3pt} with $u^{i+s}_{j+r}$ substituted by $u(t+s \Delta t,x+r\Delta x)$. 
The proof follows by a standard variational calculus. Analogous statements hold for all autonomous discrete Lagrangians \eqref{eq:DiscreteS1}, in particular for the 4-point discrete Lagrangian of \cref{ex:3ptLd}.
In analogy to \cref{prop:PalaisTW} we have the following \namecref{prop:PSCTWDiscrete}.

\begin{Proposition}[PSC for TW in discrete theory]\label{prop:PSCTWDiscrete}
Let $c \in \R \setminus \{0\}$, $b>0$, $X=([0,b/c]/\sim) \times ([0,b]/\sim)$ be a torus, $W=C^1(X,\R^d)$ continuously differentiable functions and $\Sigma = \{u \in W \,|\, u(t+s,x+cs)=u(t,x)\}$ be travelling waves with wave speed $c$.
Let $S \colon W \to \R$ be defined as in \eqref{eq:SLdCont}. Denote the restriction of $S$ to $\Sigma$ by $S_\Sigma \colon \Sigma \to \R$.
Then a travelling wave $u \in \Sigma$ is a stationary point of $S$, if and only if $u$ is a stationary point of $S_\Sigma$. 
\end{Proposition}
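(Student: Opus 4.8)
\textit{Proof proposal.} The plan is to obtain this as a direct application of Palais' principle of symmetric criticality (PSC)\cite{palais1979,Torre2011}, following verbatim the strategy already used for the continuous statement \cref{prop:PalaisTW}. The relevant symmetry is the one-parameter group $G=\R$ acting on $W=C^1(X,\R^d)$ by $(s.u)(t,x)=u(t+s,x+cs)$. Because $t$ lives on the circle of period $b/c$ and $x$ on the circle of period $b$, and since $c\neq 0$, the shift by $s=b/c$ sends $(t,x)$ to $(t+b/c,x+b)\equiv(t,x)$ on the torus; hence the action descends to an action of the \emph{compact} circle $\R/(b/c)\Z\cong S^1$, which is exactly the feature that makes the averaging argument behind PSC available. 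The fixed-point set of this action is $W^G=\{u\in W \mid u(t+s,x+cs)=u(t,x)\text{ for all }s\}=\Sigma$.

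The key computation is the $G$-invariance of $S$. Starting from \eqref{eq:SLdCont}, all three arguments of $L_d$ are translated by the same vector $(s,cs)$, so that
\begin{equation*}
S(s.u)=\int_X L_d\big(u(t+s,x+cs),\,u(t+\Delta t+s,x+cs),\,u(t+s,x+\Delta x+cs)\big)\,\d x\,\d t.
\end{equation*}
The substitution $(t',x')=(t+s,x+cs)$ is a measure-preserving diffeomorphism of the torus $X$, so both the domain and the volume element are unchanged, and since $L_d$ is \emph{autonomous} the integrand depends on $(t',x')$ only through the translated values of $u$. Therefore $S(s.u)=S(u)$ for every $s$. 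Note that autonomy of $L_d$ is precisely the hypothesis used here: a density with explicit $(t,x)$-dependence would produce a factor that the change of variables cannot absorb, and invariance would fail.

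Given invariance and the identification $\Sigma=W^G$, PSC then asserts that $u\in\Sigma$ is a critical point of the restriction $S_\Sigma=S|_{\Sigma}$ if and only if it is a critical point of the full functional $S$. By the preceding Lemma the critical points of $S$ are exactly the solutions of the functional Euler--Lagrange equation \eqref{eq:DEL3ptFunc}, which closes the argument. The main obstacle is not the invariance computation but the verification of the functional-analytic hypotheses of PSC in this infinite-dimensional, non-local setting: one must check that $G$ acts smoothly and by isometries on a suitable completion of $W$ (for instance with respect to a translation-invariant $L^2$- or $H^1$-type inner product on the torus), that $S$ is continuously differentiable there, and that $\Sigma$ is a closed linear subspace on which the restricted differential coincides with the orthogonal projection of the ambient differential of $S$. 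Compactness of $S^1$ supplies the averaging step, and since all of these conditions were already established for the continuous action \eqref{eq:STWPalais} in the proof of \cref{prop:PalaisTW}, the identical verification carries over with the smooth, finitely shifted density $L_d$ in place of $L$.
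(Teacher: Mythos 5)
Your proposal is correct and follows essentially the same route as the paper: the paper's proof simply notes that the invariance computation \eqref{eq:ComputationPropPalaisTW} from the proof of \cref{prop:PalaisTW} carries over verbatim to any autonomous Lagrangian, including the non-local density in \eqref{eq:SLdCont}, and then invokes Palais' principle for the compact circle group exactly as you do. Your observation that all three stencil arguments are translated by the same vector $(s,cs)$, so the change of variables absorbs the shift, is precisely the point the paper relies on.
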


The proof is analogous to the proof of \cref{prop:PalaisTW} since the computation in \eqref{eq:ComputationPropPalaisTW} of \cref{sec:ProofPalaisTW} works for all autonomous Lagrangians. An analogous statement holds in the case of standing travelling waves $c=0$.

\begin{Corollary}[TW in discrete wave equation]
Travelling waves of the discrete wave equation (\cref{ex:DiscreteWave}) with periodic boundary conditions in space (period $b$) are governed by $S_\Sigma \colon \Sigma \to \R$ with 
\[
\begin{split}
S_\Sigma (f) &= \int_0^b \Bigg( \frac 12 \left( \frac {f(\xi + \Delta t) - f(\xi)}{\Delta t} \right)^2\\
&\phantom{ \frac bc \int_0^b}- \frac 12 \left( \frac {f(\xi + \Delta x) - f(\xi)}{\Delta x} \right)^2
- V(f(\xi)) \Bigg) \d \xi	
\end{split}
\]
using again the identification $\Sigma \cong C^1([0,b]/\sim,\R)$.
\end{Corollary}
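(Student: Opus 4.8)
The plan is to obtain this corollary as the specialisation of \cref{prop:PSCTWDiscrete} to the explicit discrete wave density $L_d$ of \cref{ex:DiscreteWave}. By \cref{prop:PSCTWDiscrete}, a travelling wave $u\in\Sigma$ is a stationary point of the continuous action $S$ of \eqref{eq:SLdCont} if and only if it is a stationary point of the restriction $S_\Sigma$ of $S$ to $\Sigma$. Consequently the travelling waves of the discrete wave equation are precisely the stationary points of $S_\Sigma$, and the whole task reduces to evaluating $S_\Sigma$ in closed form on the profile $f$ under the identification $\Sigma\cong C^1([0,b]/\sim,\R)$, $u(t,x)=f(x-ct)=f(\xi)$.

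First I would substitute the travelling-wave ansatz into the action \eqref{eq:SLdCont}. Along a travelling wave the three arguments of $L_d$ collapse to evaluations of the single profile at the base point, at a time-shifted argument and at a space-shifted argument, so the integrand becomes a function of $\xi$ alone. Inserting the explicit quadratic density \eqref{eq:LdWave2d} then replaces the two finite differences of $L_d$ by the difference quotients $\frac{f(\xi+\Delta t)-f(\xi)}{\Delta t}$ and $\frac{f(\xi+\Delta x)-f(\xi)}{\Delta x}$, while the undifferentiated slot contributes $-V(f(\xi))$; these assemble into exactly the bracketed integrand in the claim. This mirrors the substitution performed for the continuous wave equation in \cref{ex:PSCWave}.

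Next I would collapse the double integral to the single integral over the profile. Since the integrand depends on $(t,x)$ only through $\xi=x-ct$ and $f$ is $b$-periodic, the inner integral $\int_0^b(\cdots)\,\d x$ is independent of $t$: translating the integration variable by $ct$ leaves a periodic integral unchanged. Hence integrating over $t\in[0,b/c]$ merely multiplies by the constant $b/c$, and discarding this irrelevant prefactor, as sanctioned by \cref{rem:PSCStandingTW}, yields the stated $S_\Sigma(f)=\int_0^b(\cdots)\,\d\xi$. The same remark also absorbs the standing-wave case $c=0$, where the factor $b/c$ is replaced by $1$.

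The substance of the proof is thus the bookkeeping of how one stencil step in $t$ and one in $x$ act on the comoving argument of $f$, together with the periodicity argument that makes the $x$-integral independent of $t$; neither requires estimation, only careful tracking of the shifted arguments and of the torus geometry $X=([0,b/c]/\sim)\times([0,b]/\sim)$. I would regard the periodicity reduction as the step most prone to error, since it is where the two-dimensional action genuinely degenerates to the one-dimensional functional governing the profile.
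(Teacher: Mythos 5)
Your overall route coincides with the paper's. The paper obtains this corollary by invoking \cref{prop:PSCTWDiscrete} and then stating that the expression for $S_\Sigma(f)$ ``follows from a calculation analogous to \eqref{eq:ComputationPropPalaisTW}'', with the constant factor $\frac{b}{c}$ omitted for $c\neq 0$ --- exactly your plan. Your mechanism for collapsing the double integral (the inner $x$-integral is independent of $t$ by translation plus $b$-periodicity, so the $t$-integration contributes the factor $b/c$) is an equivalent and, if anything, more direct version of the $(\xi,\theta)$ change of variables used in \eqref{eq:SPSCL}; the handling of $c=0$ via \cref{rem:PSCStandingTW} also matches.

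However, the one substantive computation --- which you yourself single out as the step most prone to error --- is asserted rather than carried out, and as asserted it is off by a factor. With $u(t,x)=f(x-ct)=f(\xi)$, one stencil step in time gives $u(t+\Delta t,x)=f\bigl(x-c(t+\Delta t)\bigr)=f(\xi-c\Delta t)$, not $f(\xi+\Delta t)$: a shift by $\Delta t$ in $t$ moves the comoving argument by $-c\Delta t$, whereas a shift by $\Delta x$ in $x$ moves it by $+\Delta x$. Substituting \eqref{eq:LdWave2d} into \eqref{eq:SLdCont} therefore yields the time difference quotient $\bigl(f(\xi-c\Delta t)-f(\xi)\bigr)/\Delta t$. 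The sign of the shift is immaterial under the periodic integral (substitute $\xi\mapsto\xi+c\Delta t$), but the magnitude $c\Delta t$ versus $\Delta t$ is not, unless $c=1$; no reparametrisation of $\xi$ can remove the $c$ from the time shift while keeping the plain $\Delta x$ in the space shift. That the factor $c$ genuinely belongs there is confirmed by the paper's own \cref{rem:DiscreteWaveTW}, where the dispersion relation for these travelling waves contains $\cos(\kappa c\Delta t)$. The displayed formula in the corollary shares this slip, so your bookkeeping reproduces the printed statement but not what the substitution actually produces; a complete proof must carry the factor $c$ through (or flag the correction explicitly), since otherwise the claimed integrand and the computed one disagree for $c\neq 1$.
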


\begin{Corollary}[TW in discrete Schrödinger]
Travelling waves of the discrete Schrödinger equation (\cref{ex:DiscreteSE}) are governed by $S_\Sigma \colon \Sigma \to \R$ with 
	\[
	S_\Sigma (f) = 
	\int_0^b L_d(f(\xi-\Delta t - \Delta x),f(\xi-\Delta t ),f(\xi),f(\xi-\Delta x)) \d \xi	
	\]
	and with $L_d$ of \eqref{eq:SELd}, using again the identification $\Sigma \cong C^1([0,b]/\sim,\R)$.
\end{Corollary}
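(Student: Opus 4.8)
The plan is to obtain this Corollary as a direct specialisation of \cref{prop:PSCTWDiscrete} to the discrete Schrödinger theory of \cref{ex:DiscreteSE}, whose discrete Lagrangian density $L_d$ of \eqref{eq:SELd} is autonomous and has four input arguments. Since the invariance computation underlying the proof of \cref{prop:PSCTWDiscrete} does not depend on the number of stencil points, it applies verbatim to the 4-point density (as already noted after that \namecref{prop:PSCTWDiscrete} and after the Lemma on continuous actions). Consequently travelling waves are exactly the stationary points of the restriction $S_\Sigma$ of the associated continuous action $S$ to $\Sigma$, and the variational characterisation itself requires no further argument. The only remaining task is to write $S_\Sigma(f)$ explicitly in the identification $\Sigma \cong C^1([0,b]/\!\sim,\R)$.

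First I would record the continuous action attached to the 4-point discrete Schrödinger Lagrangian, i.e.\ the 4-point analogue of \eqref{eq:SLdCont},
\[
S(\Psi) = \int_X L_d\big(\Psi(t-\Delta t,x-\Delta x),\,\Psi(t,x-\Delta x),\,\Psi(t-\Delta t,x),\,\Psi(t,x)\big)\,\d x\,\d t,
\]
where the four evaluation points reproduce the lattice positions $(i{-}1,j{-}1)$, $(i,j{-}1)$, $(i{-}1,j)$, $(i,j)$ appearing in \eqref{eq:SELd}. This is the action whose stationary points solve the functional form of the 9-point stencil \eqref{eq:SEDEL}, in complete parallel to the 3-point Lemma preceding \cref{prop:PSCTWDiscrete}.

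Next I would substitute the travelling-wave ansatz $\Psi(t,x)=f(x-ct)=f(\xi)$ with $\xi=x-ct$, so that each lattice shift becomes a shift of the argument of $f$: a shift by $\Delta x$ in the spatial slot sends $\xi\mapsto\xi+\Delta x$, and a shift by $\Delta t$ in the temporal slot sends $\xi\mapsto\xi-c\,\Delta t$. Collecting the four contributions renders the integrand a function of $\xi$ alone. Using $\Sigma\cong C^1([0,b]/\!\sim,\R)$, the integral over the torus $X=([0,b/c]/\!\sim)\times([0,b]/\!\sim)$ then factorises: by periodicity the inner $x$-integral equals $\int_0^b$ of the $\xi$-integrand, and the outer $t$-integral contributes the constant $b/c$, which is dropped as in \cref{rem:PSCStandingTW}. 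A final reparametrisation of $\xi$ normalises the four evaluation points and delivers the stated expression for $S_\Sigma(f)$.

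The main obstacle is bookkeeping rather than conceptual: one must track the four argument shifts consistently with the specific index convention of \eqref{eq:SELd} and justify the reduction of the double integral over $X$ to a single integral over $[0,b]/\!\sim$, which uses that the mesh widths $\Delta t,\Delta x$ are commensurate with the period $b$ so that periodicity of $f$ can be invoked. The complex-valued nature of $\Psi$ causes no extra difficulty, since $L_d$ merely takes complex arguments and both the substitution and the integral reduction proceed unchanged.
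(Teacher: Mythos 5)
Your proposal follows essentially the same route as the paper: invoke \cref{prop:PSCTWDiscrete} (whose proof is independent of the number of stencil points because the invariance computation \eqref{eq:ComputationPropPalaisTW} only uses autonomy of $L_d$), then substitute the travelling-wave ansatz and collapse the double integral over the torus to a single $\xi$-integral exactly as in \eqref{eq:SPSCL}, discarding the constant factor $b/c$. The paper compresses this into one sentence, but the content is the same, so your expansion is a faithful and correct version of the intended argument.
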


The expressions $S_\Sigma (f)$ follows from a calculation analogous to \eqref{eq:ComputationPropPalaisTW}. In the case $c \not =0$ we have omitted the irrelevant constant factor $\frac bc$ when reporting $S_\Sigma$. 

\begin{figure}
	\includegraphics[width=0.4\linewidth]{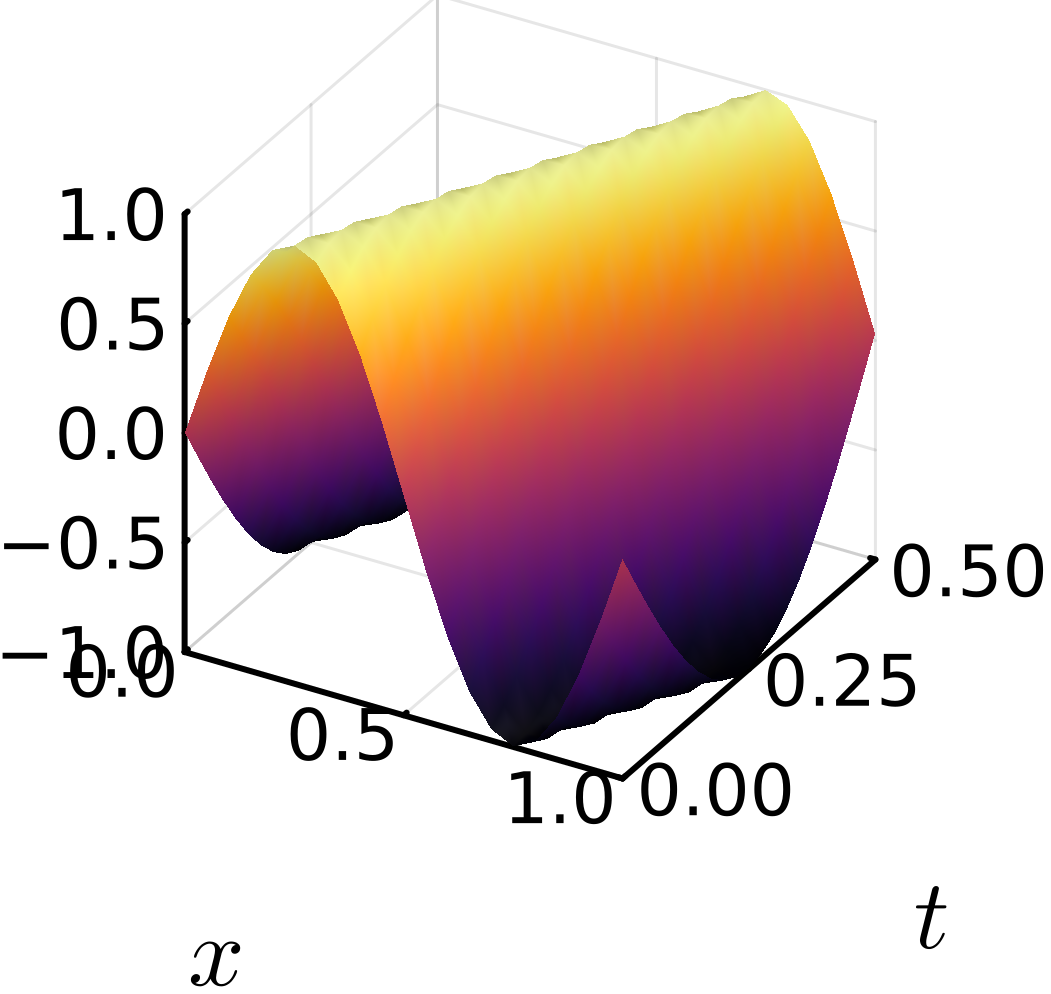}
	\includegraphics[width=0.4\linewidth]{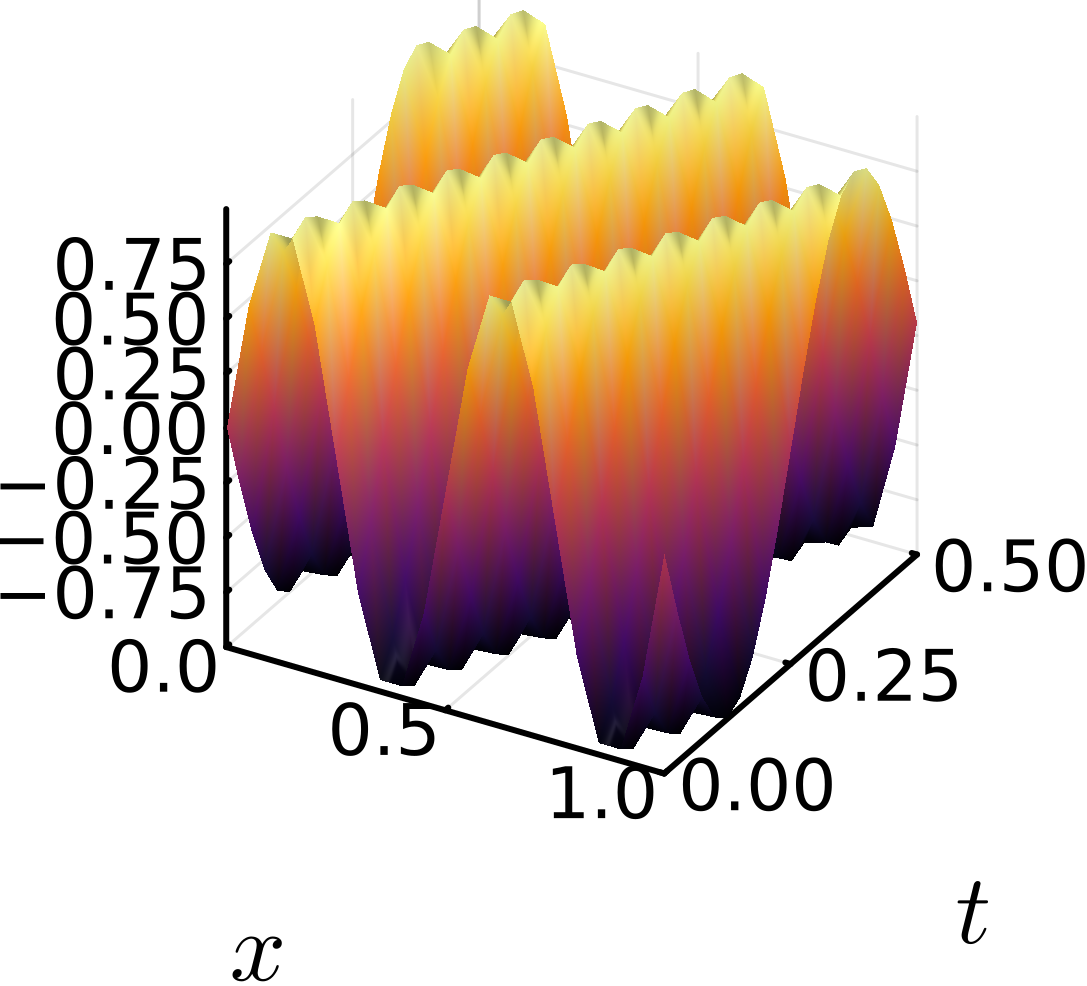}
	\caption{First two travelling waves ($m=1,2$ in \cref{rem:DiscreteWaveTW}) of the discrete wave equation with quadratic potential. The wave crest appears rough due to the mesh resolution. This is expected in discrete theories.}\label{fig:DiscreteWaveTW}
\end{figure} 
\begin{Example}[TW in discrete wave equation]\label{rem:DiscreteWaveTW}
Consider $V(u)=\frac 12 u^2$ in the discrete wave equation (\cref{ex:DiscreteWave}) with periodic boundary conditions in space (period $b$). A Fourier ansatz for $f$ reveals (away from some resonant or degenerate cases) that $u(t,x)  = f(\xi)= \alpha_1 \sin(\kappa \xi ) + \alpha_2 \cos(\kappa \xi)$, $\xi = x-ct$, $\kappa = 2 \pi m / b$, ($m \in \Z \setminus \{0\}$, $\alpha_1,\alpha_2 \in \R$) is a solution with wave speed $c$ a real solution of
\[
\cos( \kappa c \Delta t) = 1 - \frac{\Delta t^2}{2} + \frac{\Delta t^2}{\Delta x^2}(\cos(\kappa \Delta x)-1).
\]
See \cref{fig:DiscreteWaveTW} for plots of the first two travelling waves ($m=1,2$).
\end{Example}

\begin{figure}
\includegraphics[width=0.4\linewidth]{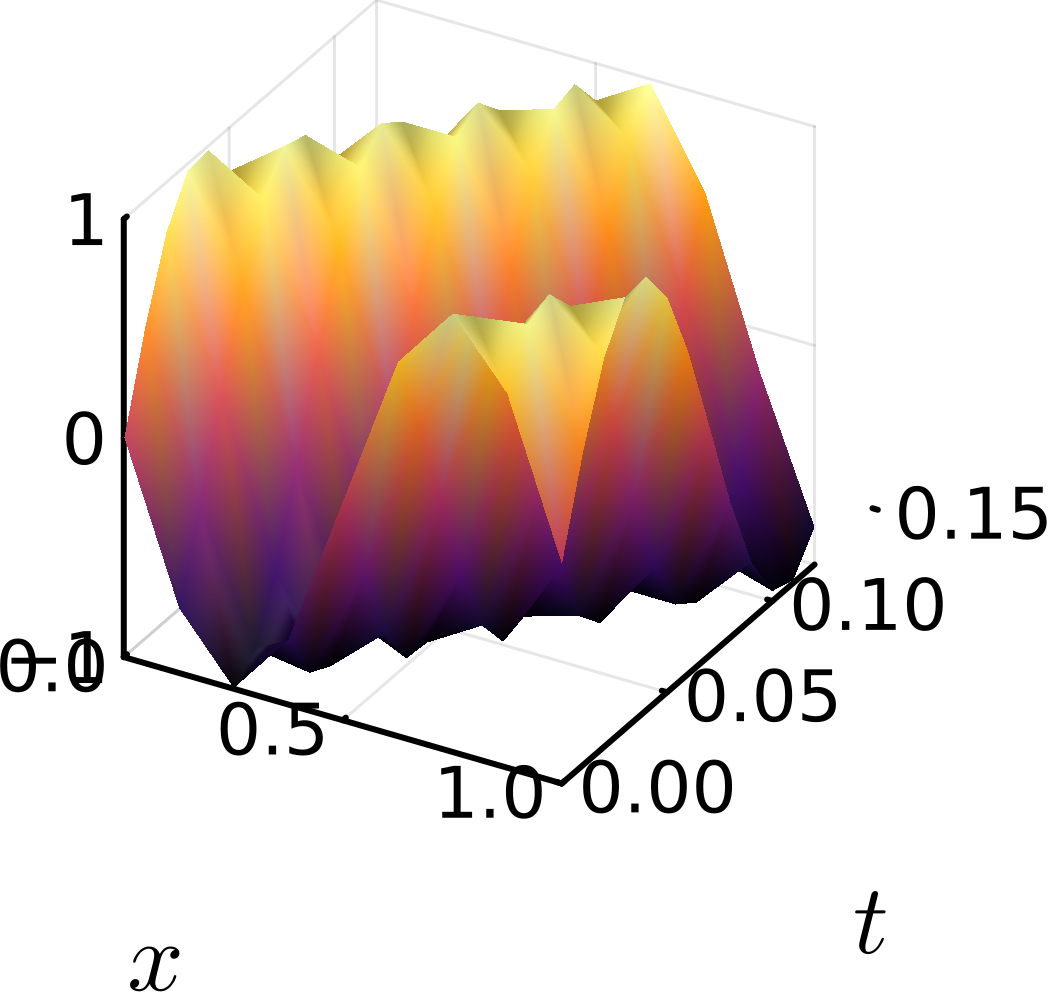}	
\includegraphics[width=0.4\linewidth]{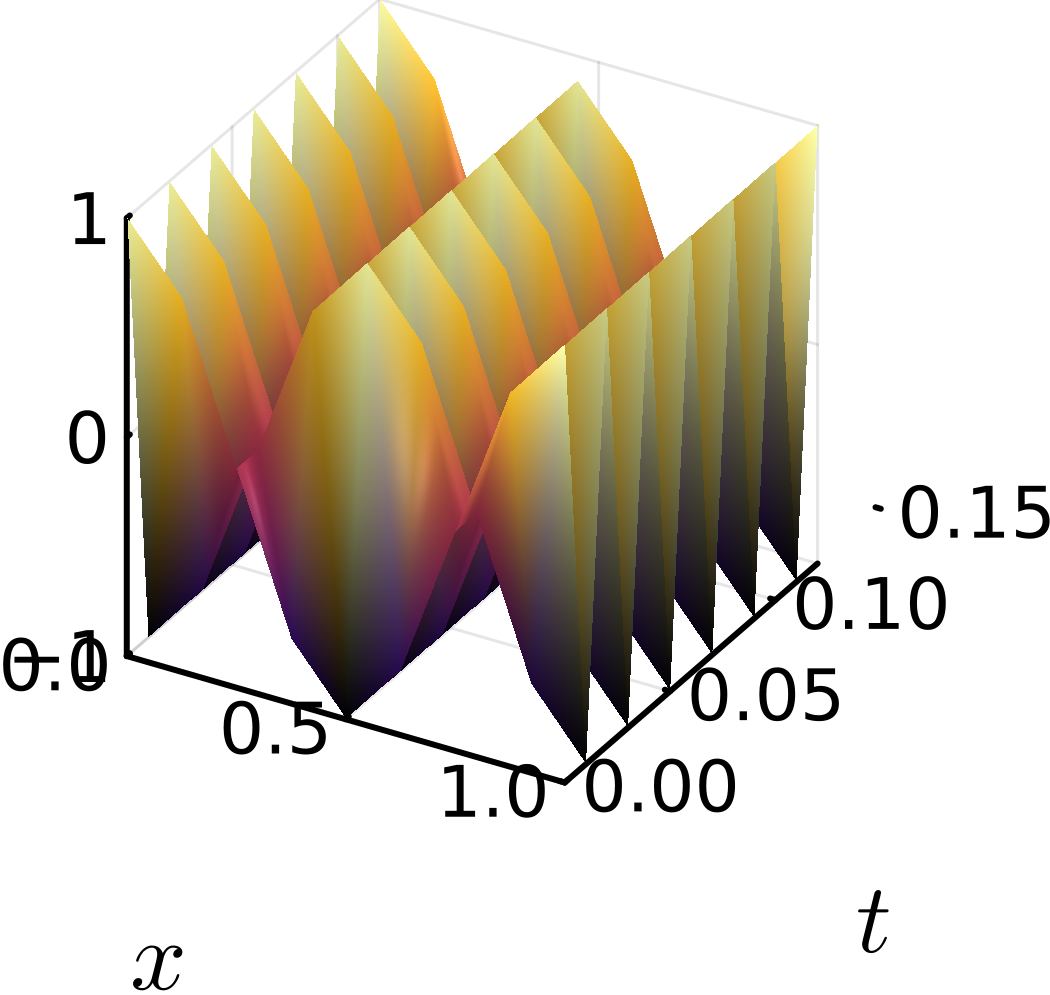}
\caption{Left: Travelling waves in \cref{rem:DiscreteWaveTWSE} of the discrete Schrödinger equation ($m=1$). The wave crest appears rough due to the mesh resolution. Right: Spurious travelling wave.}\label{fig:DiscreteWaveTWSE}
\end{figure} 
\begin{Example}[TW in discrete Schrödinger]\label{rem:DiscreteWaveTWSE}
Consider a linear potential $V(u)=\beta u$ in the discrete Schrödinger equation (\cref{ex:DiscreteSE}) with periodic boundary conditions in space (period $b$). A Fourier ansatz for $f$ reveals (away from some resonant or degenerate cases) that $u(t,x)  = f(\xi)= \alpha e^{\mathrm i \kappa \xi}$, $\xi = x-ct$, $\kappa = 2 \pi m / b$, ($m \in \Z\setminus\{0\}$, $\alpha \in \R$) is a solution with wave speed
\[
c = \frac{2}{\kappa \Delta t} \left( 
\arctan\left(
\frac{2}{\hbar} \frac{\Delta t}{\Delta x^2}
\tan^2(\frac 12 \kappa \Delta x)
+ \frac \beta {2 \hbar } \Delta t
\right) + s \pi
\right),
\]
where $s \in \Z$. Asymptotic expansion in $\Delta t, \Delta x$ recovers the expression \eqref{eq:WaveSpeedSE} up to higher order terms. Additional travelling wave solutions of the discrete theory are obtained with wave speed $c = \frac{b (2\tilde{m}+1) }{2 m \Delta t}$, $\tilde{m} \in \Z$. These are regarded as spurious since they do not converge to solutions of the continuous theory as $\Delta t \to 0$.
(See \cref{fig:DiscreteWaveTWSE}.)
\end{Example}

\begin{Remark}[Blended backward error analysis]
	Using the technique of blended backward error analysis\cite{BEAMulti,PDEBEA} we show that the discrete action $S_\Sigma$ can formally be written as a continuous first order theory $S_\Sigma(f)=\int \tilde L(f,f_\xi) \d \xi$, where $\tilde L$ is a formal power series in $\Delta t$, $\Delta x$.
\end{Remark}

%
%
%

\section{Numerical Experiments}\label{sec:Experiments}

We report numerical experiments based on learning discrete field theories for the discrete wave equation (\cref{ex:DiscreteWave}) and the discrete Schrödinger equation (\cref{ex:DiscreteSE}) from data. The discrete field theories with the discrete Lagrangian densities \eqref{eq:LdWave2d}, \eqref{eq:SELd} are regarded as the true theories rather than as approximations of continuous theories in this context.

The use-case of our machine learning architecture is to learn effective models of variational dynamical systems whose underlying dynamics is an unknown variational partial differential equation. The observational data is discrete and lies on a spacetime lattice. We, therefore, restrict ourselves to explore models which capture solutions whose discrete Fourier modes exhibit an exponential decay.
This disregards solutions that do not have a limit as lattice parameters are made asymptotically small. These are sometimes referred to as spurious solutions in numerical integration theory\cite{GeomIntegration} as they do not correspond to solutions of an underlying continuous theory.

We evaluate the performance of the data-driven models by computing or extending solutions of the trained models to initial data and checking consistency against the true models, compare performance to alternative reduced order modelling approaches (ROMs), and analyse how well travelling waves are captured. In the experiments, travelling waves will not be part of the training data but will be discovered using the trained model.

\subsection{Wave equation}\label{sec:WaveExperiment}

Consider the space-time domain $X=[0,T]  \times [0,b]/\sim$ (periodic boundary conditions in space), $T=0.5$, $b=1$, discretisation parameters $\Delta t = 0.025$, $\Delta x = 0.05$, and quadratic potential $V(u)=\frac 12 u^2$. The discrete lattice is denoted by $X_d$ and $M= b/\Delta x = 20$ is the number of spatial interior grid points and $N = T/\Delta t=20$ the number of time steps.

\subsubsection{Training data}\label{sec:TrainingDataWave}

\begin{figure}
	\includegraphics[width=0.4\linewidth]{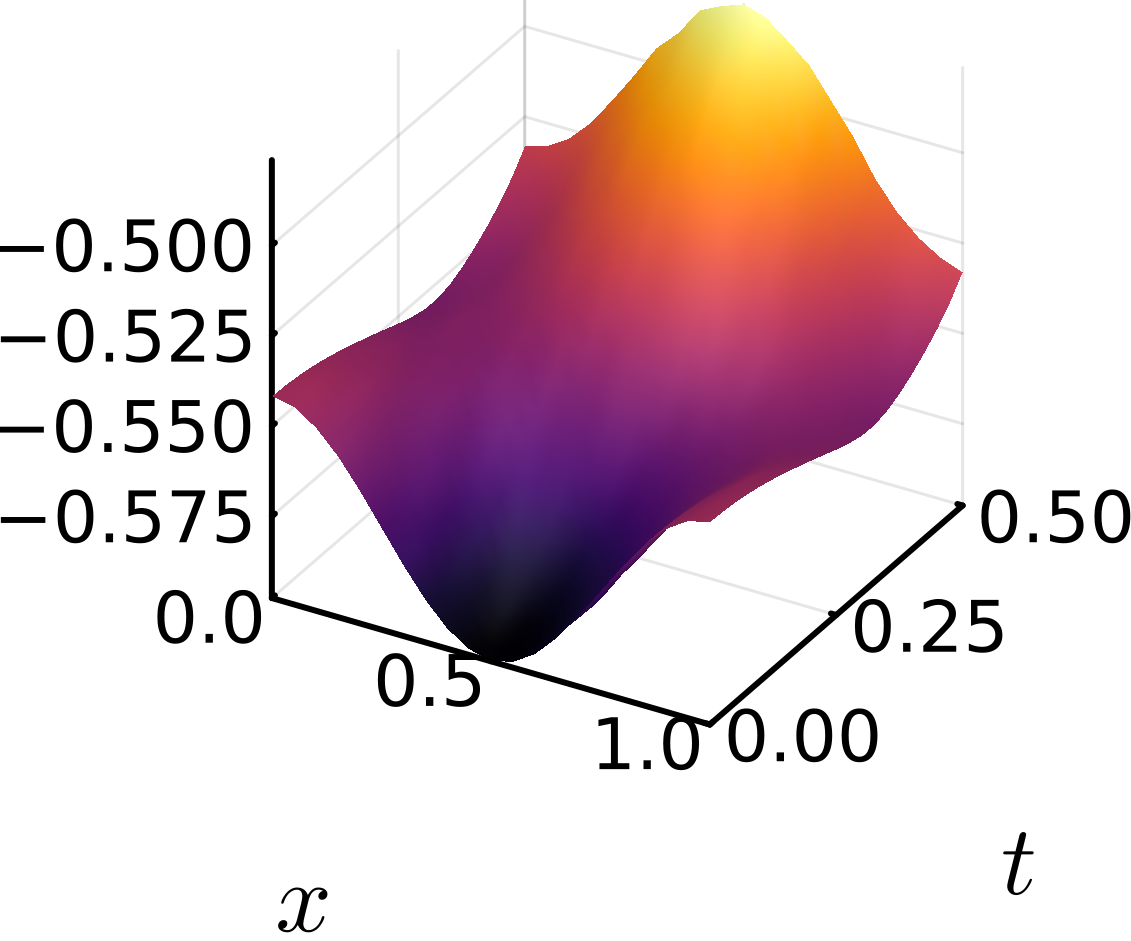}
	\includegraphics[width=0.45\linewidth]{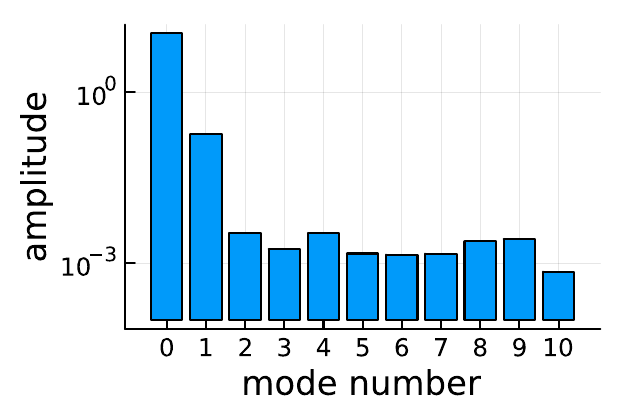}
		\includegraphics[width=0.4\linewidth]{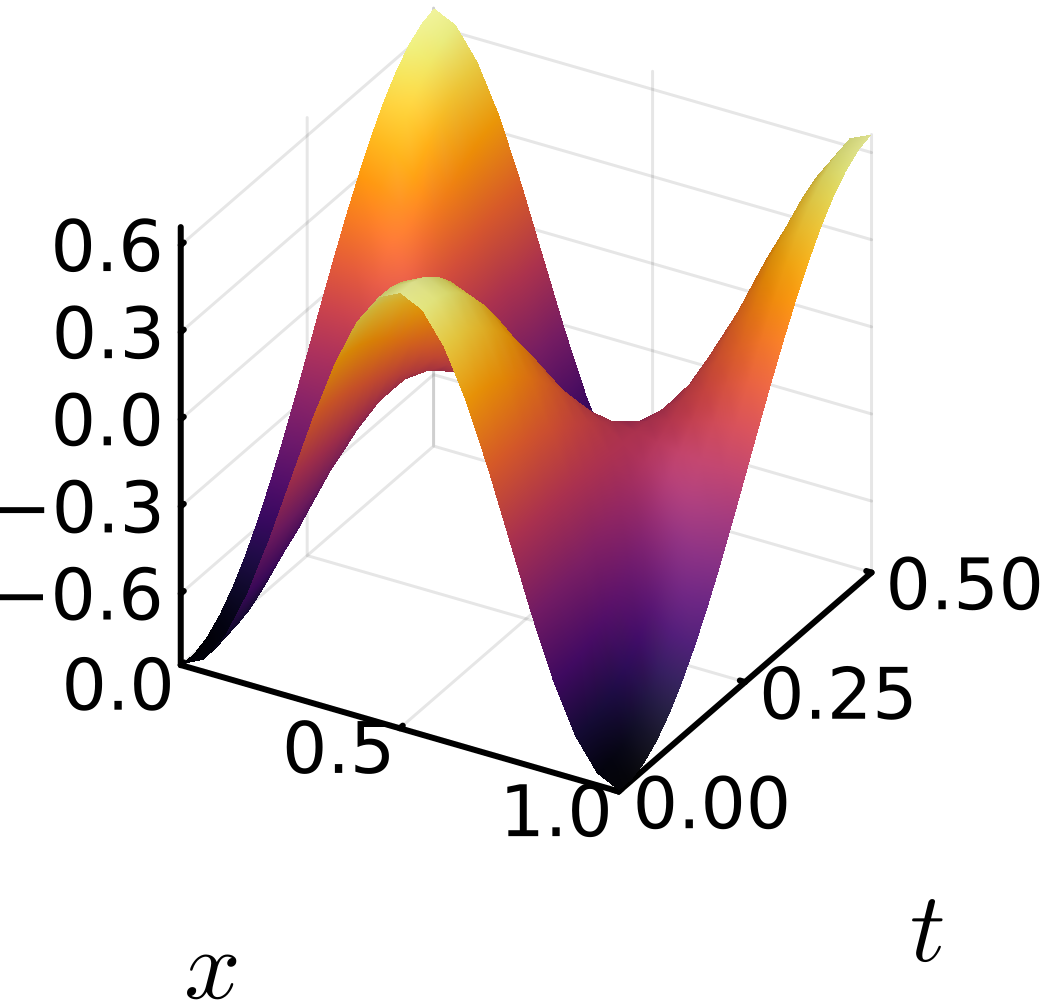}
	\includegraphics[width=0.45\linewidth]{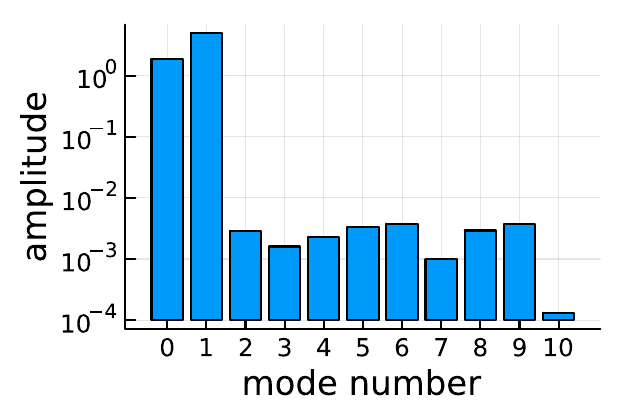}
	\includegraphics[width=0.45\linewidth]{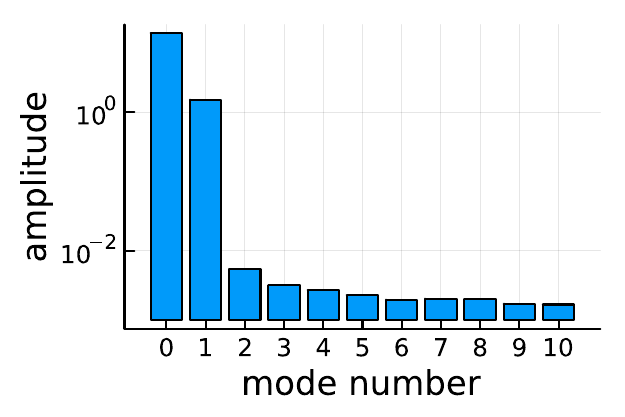}
	\caption{Typical elements in the training data set (left) and their absolute value of amplitudes of Fourier modes averaged over time (right). Bottom: Absolute values of amplitudes of Fourier modes averaged over time and all training data.}\label{fig:TrainingDataWave}
\end{figure}
To generate initial data, $M^{\mathrm{rfft}}=11$ Fourier coefficients $\gamma_j = M e^{-2 j^4} \eta_j$ ($j=0,\ldots,M^{\mathrm{rfft}}$) are computed where $\eta_j$ are samples from a standard normal distribution. An application of the inverse of the discrete Fourier transform for real input ($\mathtt{irfft}$ of Julia package FFTW.jl\cite{FFTWjl2005}) yields initial data $(u^0_j)_{j=0}^{M-1}$. Moreover, initial velocities $(\dot u^0_j)_{j=0}^{M-1}$ are drawn from a standard normal distribution. Then $K =80$ solutions of the field theory are computed (\cref{rem:comp34Ld}).
\Cref{fig:TrainingDataWave} displays two typical solutions and the absolute values of the amplitudes of their discrete Fourier modes averaged over all time steps. A corresponding average over all times and all solutions is displayed at the bottom of \cref{fig:TrainingDataWave}.
Each solution $U^{(k)}=(u^{i,(k)}_j)_{0\le j < M}^{0\le i \le N}$ yields $(N-1)M=380$ tuples $(u^{i,(k)}_{j},u^{i+1,(k)}_{j},u^{i,(k)}_{j+1},u^{i-1,(k)}_{j},u^{i-1,(k)}_{j+1},u^{i,(k)}_{j-1},u^{i+1,(k)}_{j-1})_{0 \le j < M}^{0 < i < N}$ of data corresponding to the $7$-point stencil. The 7-point tuples constitute the training data set $\mathcal{D}$.
$\mathcal{D}$ contains 30400 tuples of stencil data.
For batch training\cite{MarslandMachineLearning}, $\mathcal{D}$ is partitioned into batches of size 10.

\subsubsection{Training}

The 3 point discrete Lagrangian of \cref{ex:3ptLd} is modelled by a neural networks (3 layers, interior layer with 10 nodes, 160 parameters in total, activation function $\mathtt{tanh}$).
Consider $\ell_{\mathrm{data}}$ of \eqref{eq:elldata}
\begin{equation}\label{eq:elldataWave}
\ell_{\mathrm{data}} = \sum_{\mathrm{stencil\, data}\in \mathrm{batch\, of\,} \mathcal{D}} |\mathrm{DEL}(L_d)(\mathrm{stencil\, data})|^2
\end{equation}
with $\mathrm{DEL}(L_d)$ of \eqref{eq:DEL3pt}, 
 and $\ell_{\mathrm{reg}}$ from \eqref{eq:regLd3pt}
\begin{equation}\label{eq:regLd3ptWave}
\ell_{\mathrm{reg}}
=\alpha \sum \left\|\left(\frac{\p^2 L_d}{\p u^{i}_{j}\p u^{i+1}_j}({u^i_j}^{(k)},{u^{i+1}_j}^{(k)},{u^i_{j+1}}^{(k)})\right)^{-1}\right\|^2,
\end{equation}
with normalisation $\alpha =  \frac{1}{\mathrm{batch\, size}} = \frac 1 {10}$. The sum is taken over all stencil data $(u^{i,(k)}_{j},u^{i+1,(k)}_{j},u^{i,(k)}_{j+1},u^{i-1,(k)}_{j},u^{i-1,(k)}_{j+1},u^{i,(k)}_{j-1},u^{i+1,(k)}_{j-1})$ of a batch.

For each batch of $\mathcal{D}$ one optimisation step with the optimisation method $\mathtt{adam}$\cite{Adam} with the default learning parameters of the Julia/Flux.jl\cite{Fluxjl2018,innes2018} implementation is applied to the parameters of the neural network using the loss function $\ell = \ell_{\mathrm{data}}+\ell_{\mathrm{reg}}$. This is repeated 1320 times (number of epochs).
For the trained model we have $\ell_{\mathrm{data}} \approx 8.6 \cdot 10^{-8}$ and $\ell_{\mathrm{reg}} \approx 1.4 \cdot 10^{-7}$. (Here $\ell_{\mathrm{data}}$, $\ell_{\mathrm{reg}}$ are evaluated on the full data set $\mathcal{D}$.)

\subsubsection{Evaluation}

\paragraph{Extrapolation}

\begin{figure}
	\includegraphics[width=0.4\linewidth]{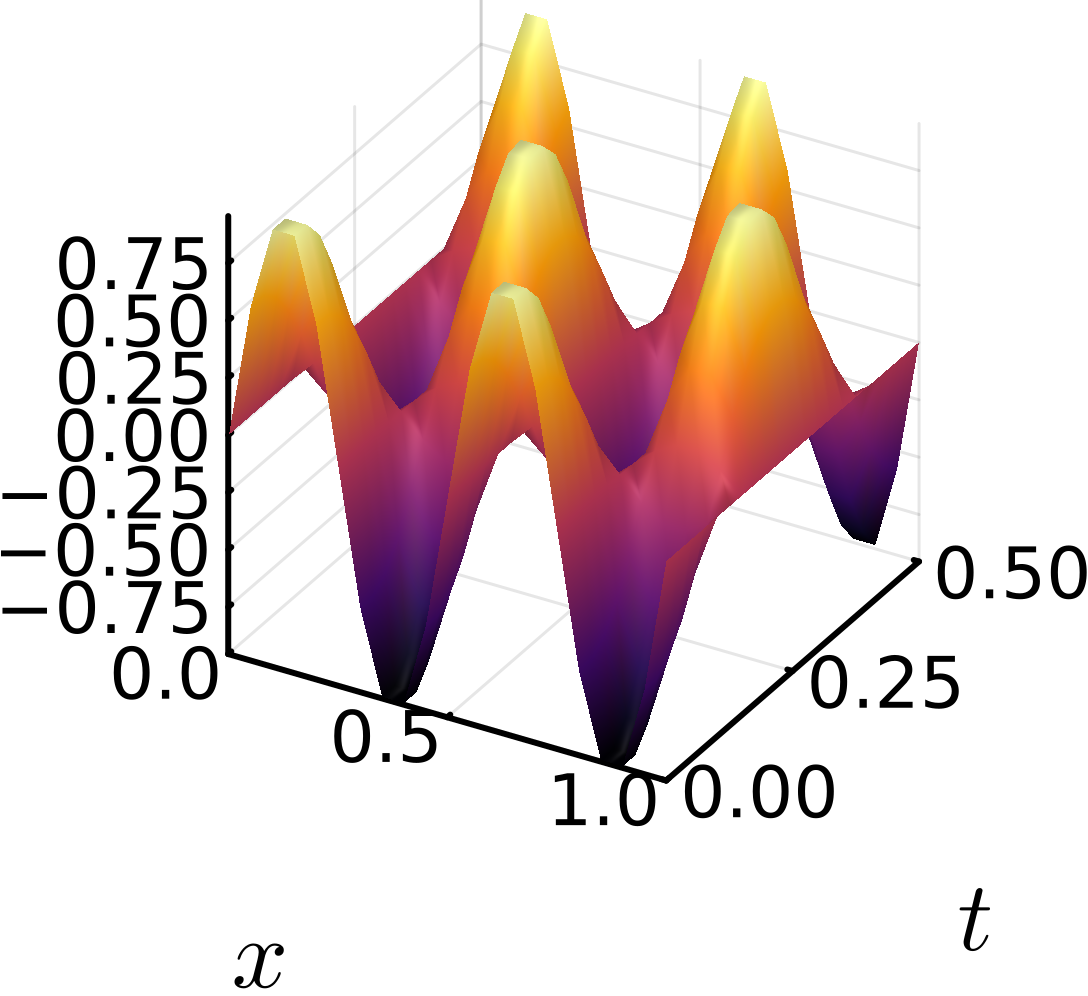}
	\includegraphics[width=0.4\linewidth]{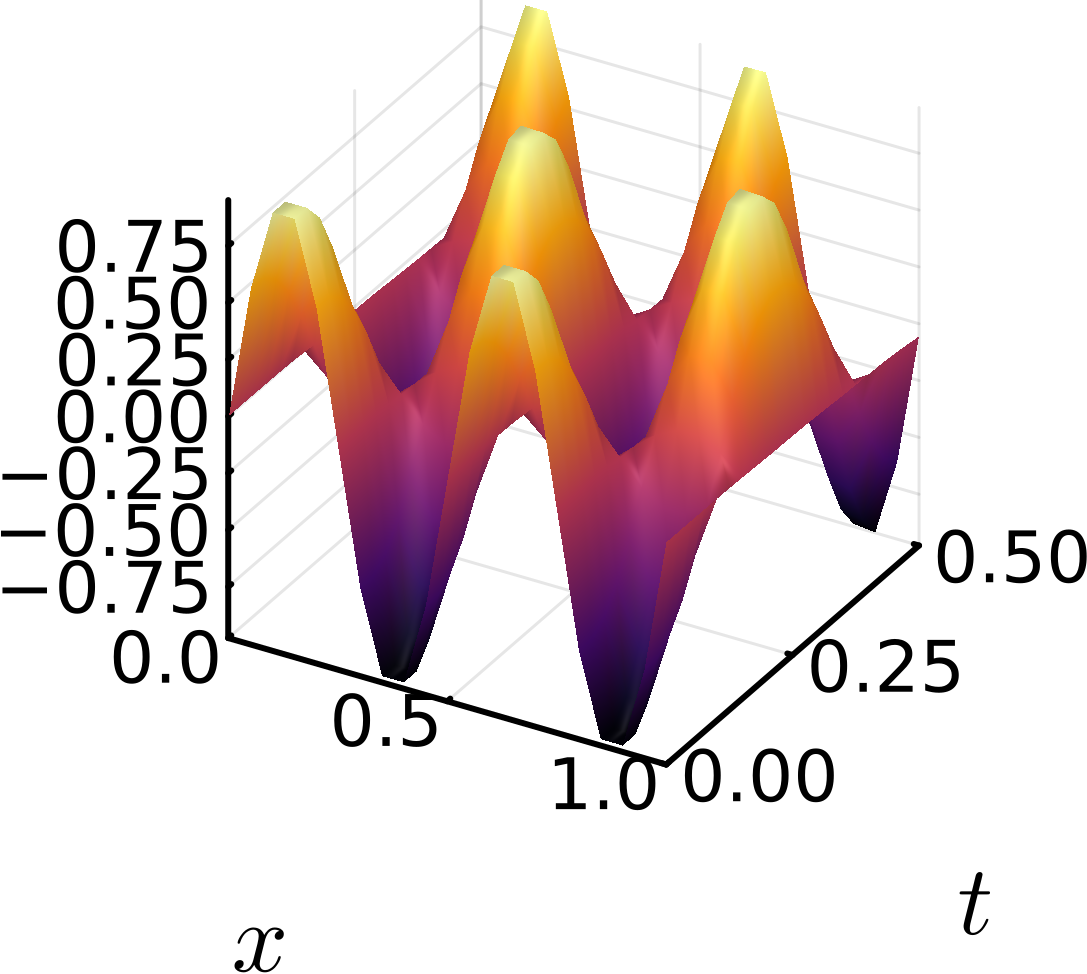}
	\includegraphics[width=0.4\linewidth]{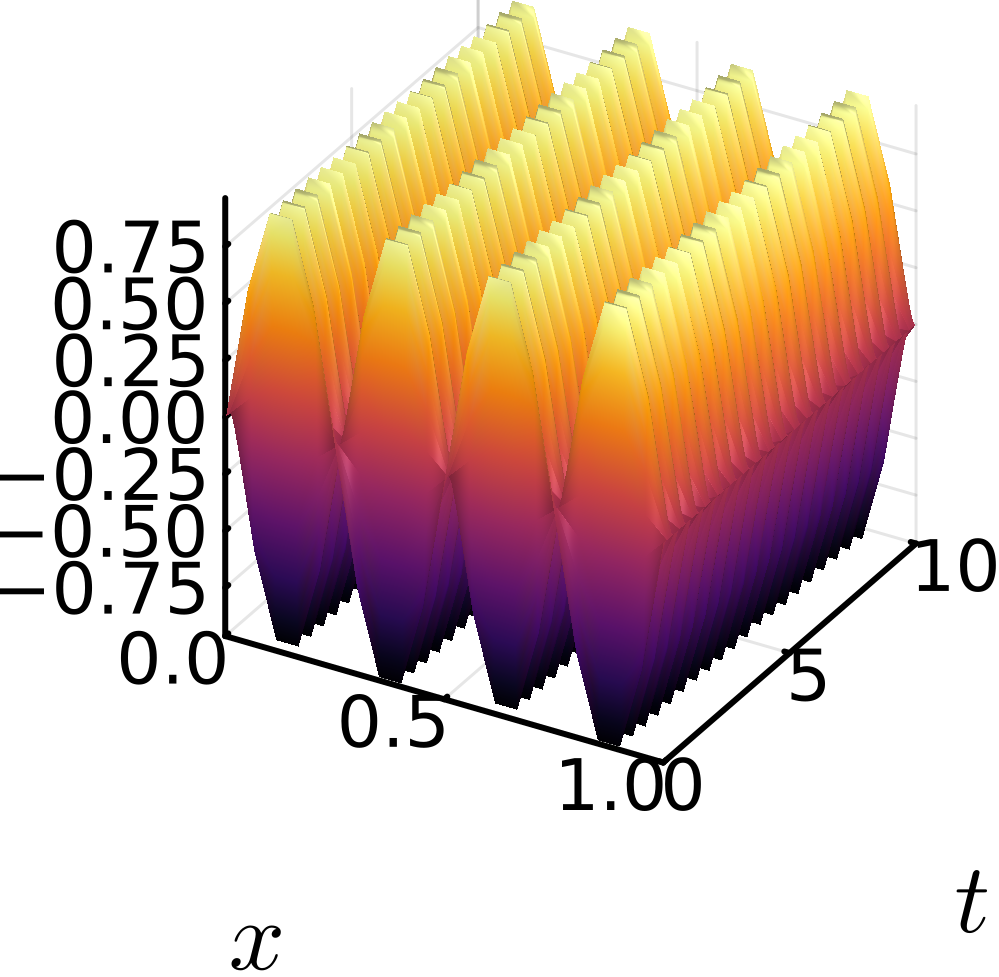}
	\includegraphics[width=0.4\linewidth]{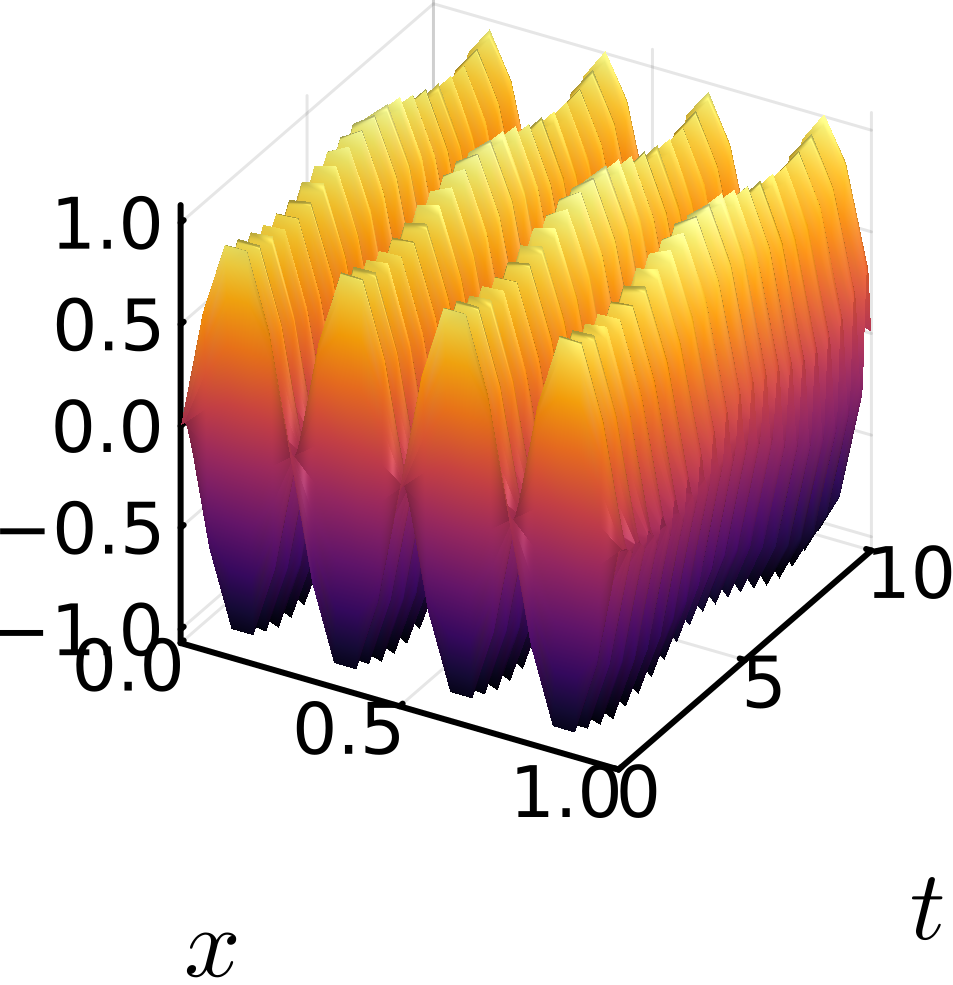}
	\caption{True solution of the model (left) versus computed solution from the learned model (right). The initial data $u^0_j = u^1_j=\sin(4\pi/T j \Delta x)$ was not seen during training. Top line shows predictions until $T=0.5$ (as in training), while the bottom line shows extrapolation until $20T=10$.}\label{fig:EvaluationWave}
\end{figure}
The trained neural network model of a discrete Lagrangian density is used to compute solutions of the learned field theory initialised from the unseen initial value $u^0_j = u^1_j = \sin(4\pi j \Delta x)$, $j=0,\ldots,M-1$ (\cref{fig:EvaluationWave}).
Computations are performed as explained in \cref{rem:comp34Ld}.
On the training domain the absolute difference between a true solution is $\| U_\mathrm{true} - U_\mathrm{predicted}\|_\infty \approx 0.057$. On the extrapolated domain with final time $20T$ the error rises to $\| U_\mathrm{true} - U_\mathrm{predicted}\|_\infty \approx 0.343$ and to $\approx 1.41$ for $100T$.

\paragraph{Travelling waves}\label{sec:TWExperimentWave}
The travelling waves of \cref{fig:DiscreteWaveTW} fulfil the discrete Euler--Lagrange equations \eqref{eq:DEL3pt} for the trained discrete Lagrangian density $L_d$ with a maximal error of $\max_{i,j} |\mathrm{DEL}(L_d)^i_j|< 0.004$ ($m=1$, left plot of \cref{fig:DiscreteWaveTW}) or $\max_{i,j} |\mathrm{DEL}(L_d)^i_j|< 0.033$ ($m=2$, right plot of \cref{fig:DiscreteWaveTW}), respectively.
Therefore, these true travelling waves constitute solutions of the learned discrete field theory (up to small error).
Travelling waves can be found numerically in the trained model as well: for a guess of a wave speed $c$ and a periodic function $f \colon [0,b]/\sim \to \R$
\begin{equation}\label{eq:fTWFinder}
f(\xi) = \sum_{m=- (1+\lfloor \frac M2 \rfloor)}^{1+\lfloor \frac M2 \rfloor} \hat{f}_{|m|} e^{\frac{2\pi \mathrm{i}m}{b} \xi}
\end{equation}
represented by $1+\lfloor \frac M2 \rfloor$ Fourier coefficients $\hat{f}_{m} \in \mathbb{C}$, the objective function
\[
\ell_{\mathrm{wave}}(c,(\hat{f})_{m=0}^{1+\lfloor \frac M2 \rfloor})
= \sum_{i=1}^{N-1}\sum_{j=0}^{M-1} \| \mathrm{DEL}(L_d)^i_j\|^2
\]
is minimised with $\mathrm{DEL}(L_d)^i_j$ from \eqref{eq:DEL3pt} with $u^i_j = f(i \Delta t - c j\Delta x)$ for all $i,j$.
To avoid finding the trivial solution, the normalisation condition $\ell_{\mathrm{wave}}^\mathrm{unit} = |\| f\|^2_{L^2}-1|$ is added to the objective. Here $\| f \|^2_{L^2} = \Delta x \sum |\hat{f}_{|m|}|^2$ is the discrete $L_2$ norm of $f$.

For the experiment, we initialise the optimisation procedure from the true Fourier coefficients $(\hat{f}_m)_m$ and wave speed $c$ corresponding to the true travelling waves of \cref{rem:DiscreteWaveTW} (wave number $m=1,2$) perturbed by random noise (identically independently normally distributed, standard deviation $\sigma = 0.5$). The located travelling wave $U^{\mathrm{TW}}$ has maximal norm error $\|U^{\mathrm{TW}} - U^{\mathrm{TW}}_{\mathrm{ref}}\|_\infty <0.116$ (wave number $m=1$) or $\|U^{\mathrm{TW}} - U^{\mathrm{TW}}_{\mathrm{ref}}\|_\infty <0.31$ (wave number $m=2$), respectively. The wave speeds are found up to an error of $|c^{\mathrm{TW}}-c^{\mathrm{TW}}_{\mathrm{ref}}| <0.0009$ or $|c^{\mathrm{TW}}-c^{\mathrm{TW}}_{\mathrm{ref}}| <0.0082$, respectively. \Cref{fig:TWLocate} compares the contours of the located and the true discrete travelling wave for the case $m=2$.
\begin{figure}
	\includegraphics[width=0.45\linewidth]{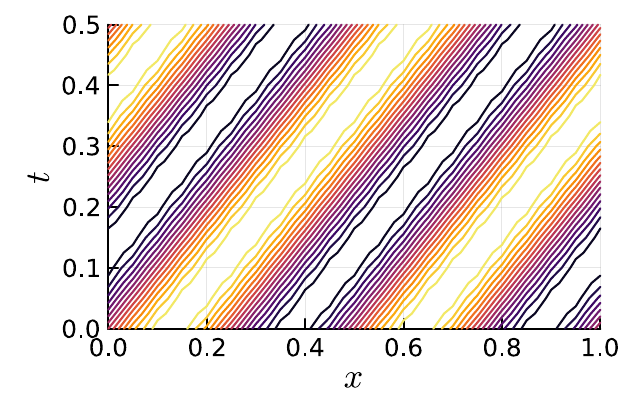}
	\includegraphics[width=0.45\linewidth]{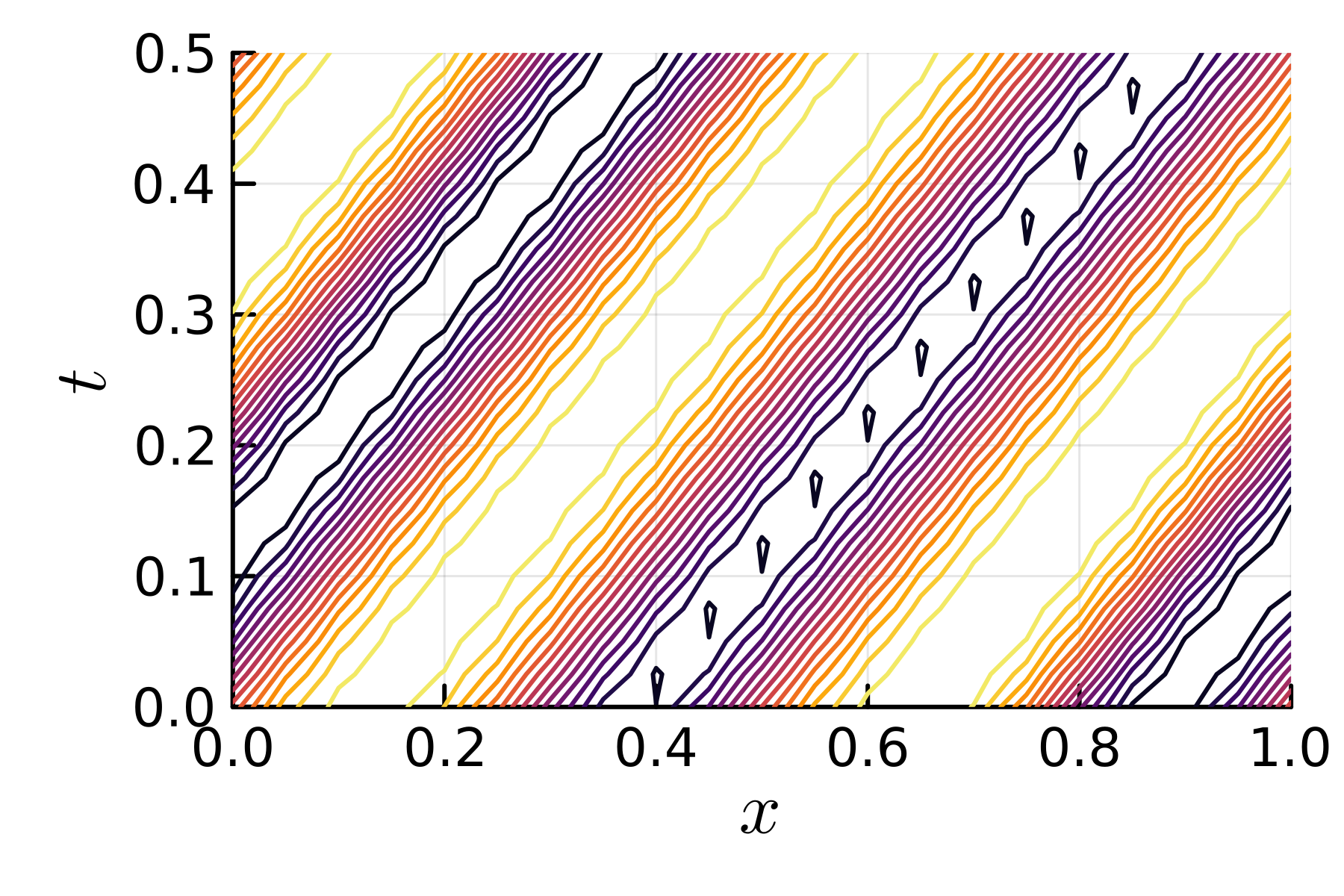}
	\caption{Contour plot of true travelling wave (left) with wave number 2 and the located travelling wave (right). }\label{fig:TWLocate}
\end{figure}

\subsection{Coarse Mesh}
Even though training data $(U^{(k)})_{k=1}^K$ (see \cref{sec:TrainingDataWave}) is available on the fine mesh $X_d$ with $M(N+1)$ mesh points, the discrete model $L_d$ can be trained on a coarser mesh. This can be useful to save computational costs when only low resolution predictions are required. Consider the mesh $X_d^\mathrm{coarse}$ for $X$ with $(\Delta t^{\mathrm{coarse}},\Delta x^{\mathrm{coarse}})=(2\Delta t, 2\Delta x)$. We find $(N-3)*M*K = 27200$ tuples $(u^{i,(k)}_{j},u^{i+2,(k)}_{j},u^{i,(k)}_{j+2},u^{i-2,(k)}_{j},u^{i-2,(k)}_{j+2},u^{i,(k)}_{j-2},u^{i+2,(k)}_{j-2})_{0 \le j < M}^{0 < i < N-1}$ 
with data over the coarse 7-point stencil in the observational data. Notice that this makes efficient use also of data over mesh points in $X_d$ that do not lie in $X_d^{\mathrm{coarse}}$. 
Let $\mathcal{D}^\mathrm{coarse}$ be a set of the 27200 training data tuples.
The discrete Lagrangian density $L_d^{\mathrm{coarse}}$ is parametrised with a neural network of the same architecture as before. For training, the network parameters are randomly initialised. The same expressions for the loss function \eqref{eq:elldataWave}, \eqref{eq:regLd3ptWave} and the same batching and optimisation methods are used. 
$L_d$ is trained for 63800 epochs with final losses $\ell_{\mathrm{data}} \approx 1.1 \cdot 10^{-5}$, $\ell_{\mathrm{reg}} \approx 2.5 \cdot 10^{-5}$.

\begin{figure}
	\includegraphics[width=0.4\linewidth]{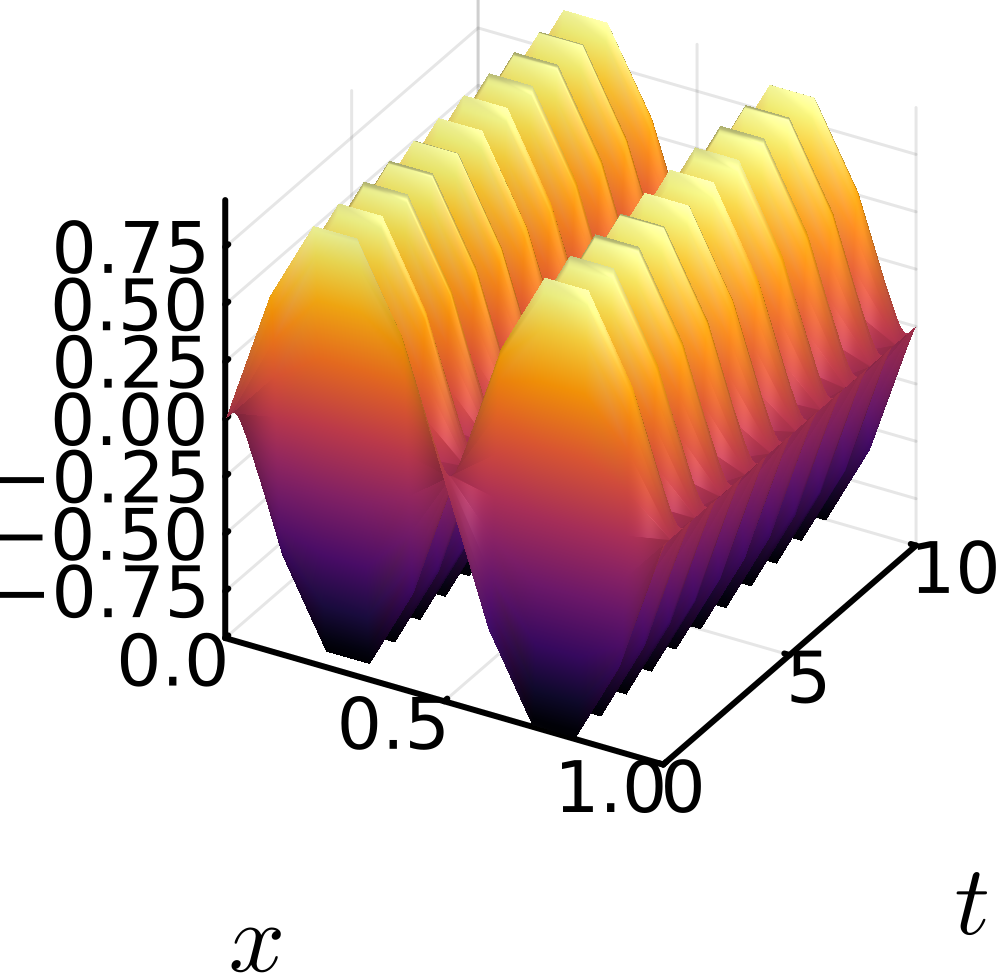}
	\includegraphics[width=0.4\linewidth]{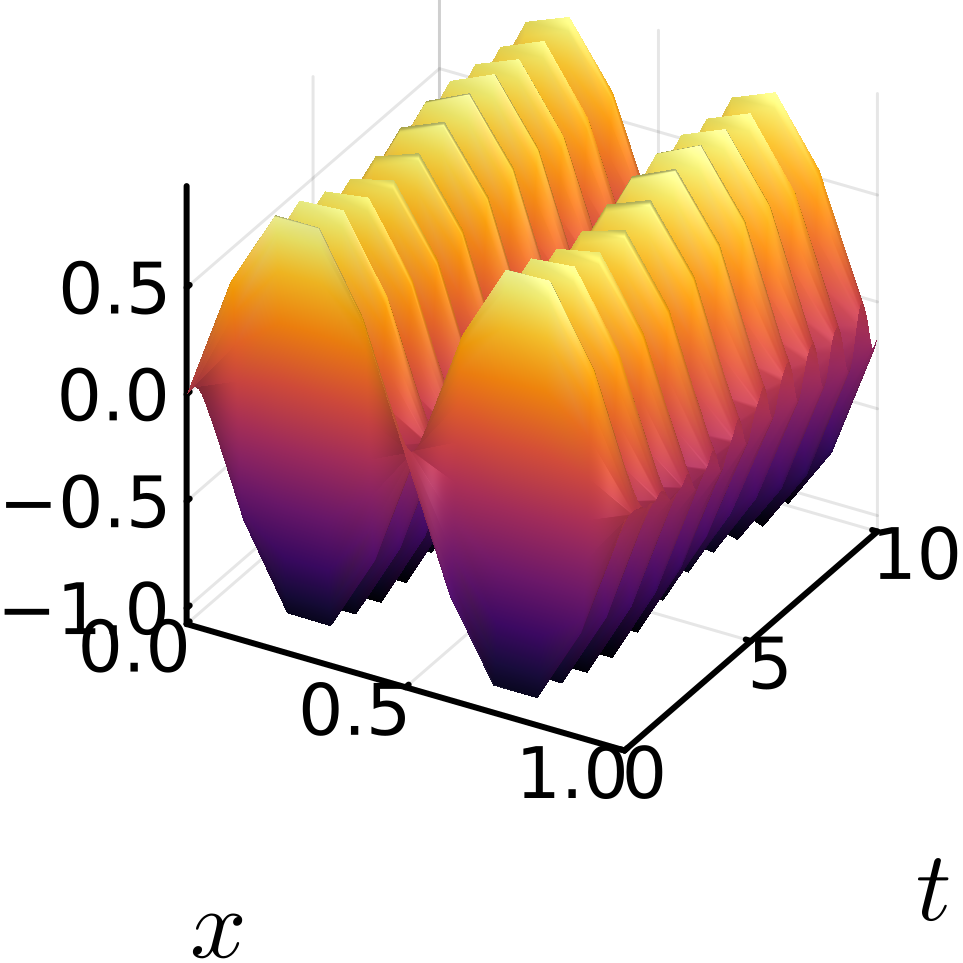}
	\includegraphics[width=0.4\linewidth]{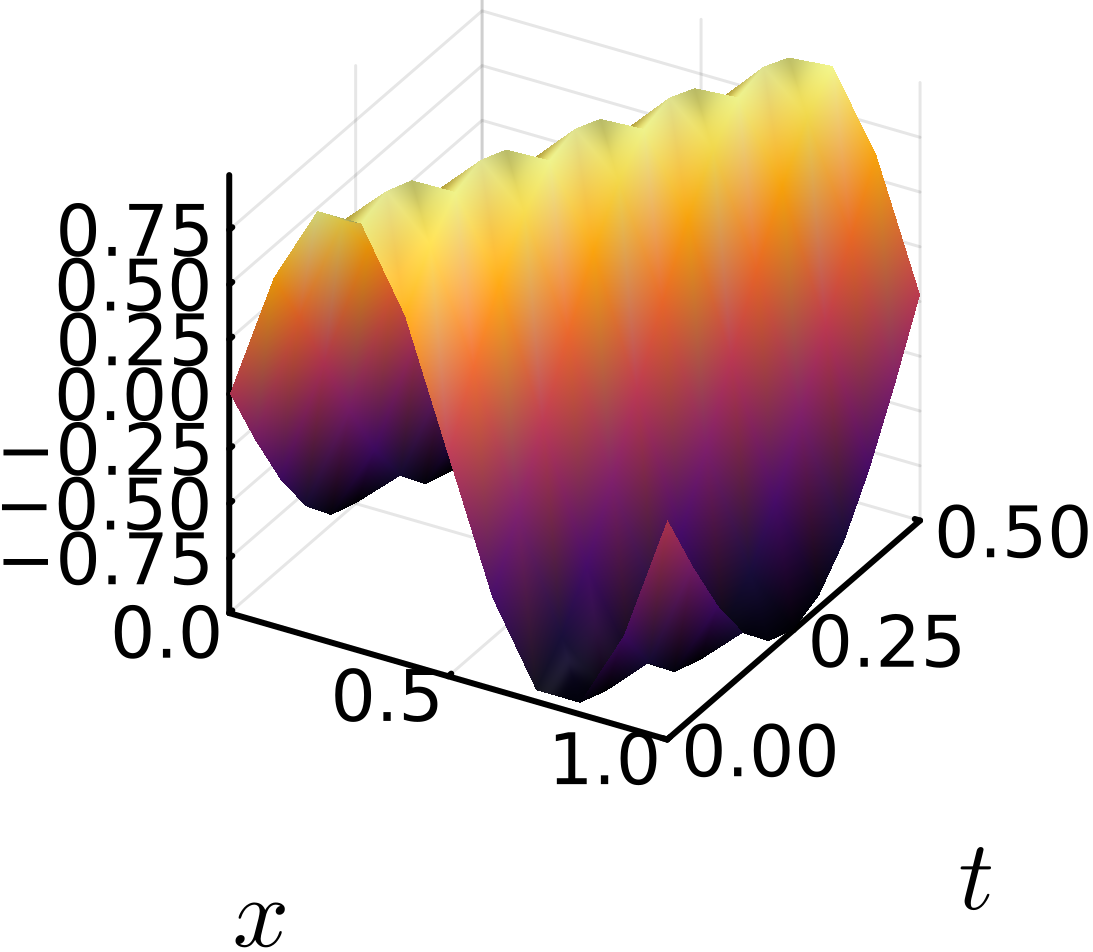}
	\includegraphics[width=0.4\linewidth]{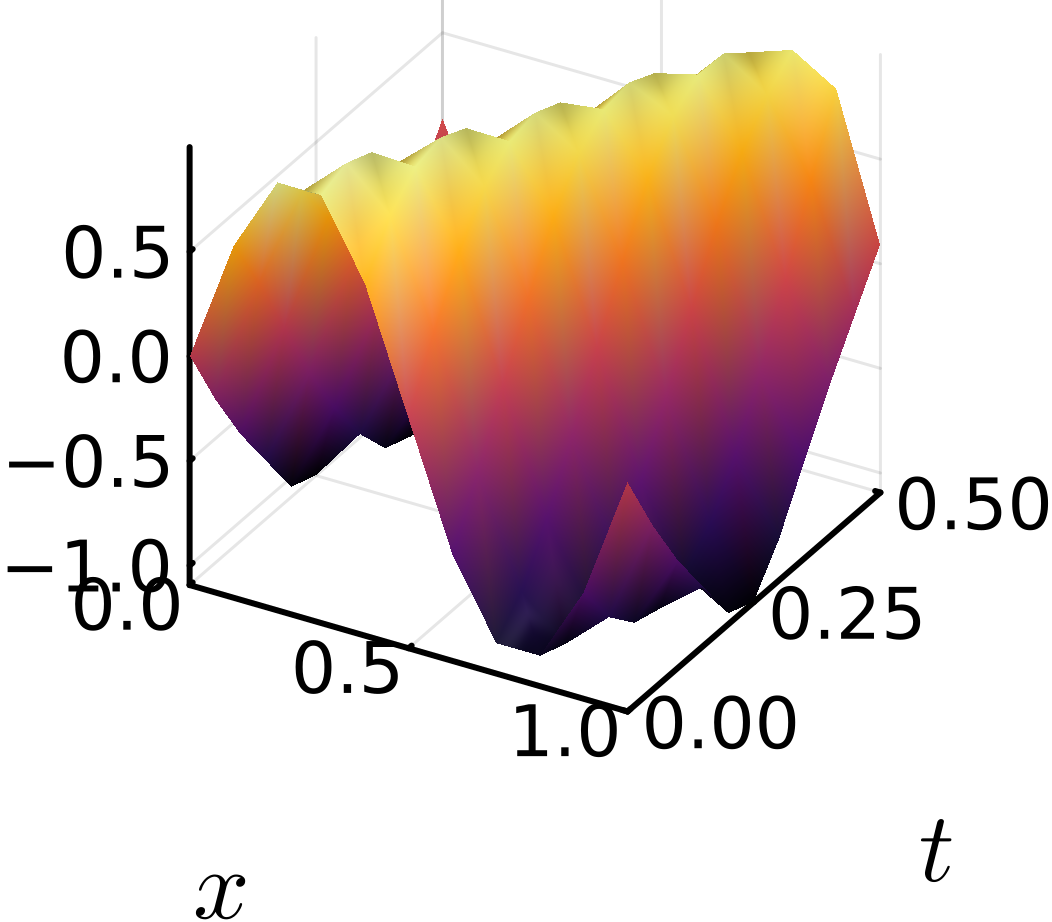}
	\caption{Model for discrete wave equation trained for a coarser grid. Top: Computation of solution to unseen initial data $u^0_j = u^1_j=\sin(2\pi/T j \Delta x^{\mathrm{coarse}})$ on extended domain $20T$ (left: reference, right: data-driven model).
	Bottom: Initialisation with exact data $u^0_j$, $u^1_j$ of a travelling wave. (left: reference, right: data-driven model)
	}\label{fig:CoarseEvaluation}
\end{figure}
\Cref{fig:CoarseEvaluation} shows numerical experiments with the trained coarse model.
In the top row of the figure, the (coarse) 7-point stencil is used to compute a solution $U_{\mathrm{predicted}}$ on $X_d^{\mathrm{coarse}}$ initialised from the unseen initial data $u^0_j = u^1_j=\sin(2\pi/T j \Delta x^{\mathrm{coarse}})$.
The prediction contains an extrapolation until $20T=10$.
A reference is computed with the 7 point stencil (reducing to a 5 point stencil) w.r.t.\ the original, fine mesh $X_d$ applied to the discrete Lagrangian density \eqref{eq:LdWave2d} of \cref{ex:DiscreteWave} (the true model). 
Subsampling over $X_d^\mathrm{coarse}$ yields the reference $U_{\mathrm{true}}$.
We obtain $\|U_{\mathrm{predicted}}-U_{\mathrm{true}}\|_\infty \approx 0.33$.
The bottom row of \cref{fig:CoarseEvaluation} shows the prediction $U_\mathrm{predict}^{\mathrm{TW}}$ of the model when initialised from data $u^0_j = u^1_j$ of a travelling wave $(m=1)$ of the true system.
We have $\|U_\mathrm{predict}^{\mathrm{TW}} - U_\mathrm{true}^{\mathrm{TW}}\|_\infty \approx 0.24$.

\subsection{Comparison to MOR based technique}

For comparison, we employ model order reduction in the spatial variable and then learn the temporal dynamics. This approach was explored in a recent articles in the continuous Lagrangian setting\cite{sharma2022Lagrangian} and in related settings and problems\cite{Blanchette2020,Blanchette2022,Glas2023,Carlsberg2015,sharma2023symplectic,Sharma2022,Tyranowski2022}. However, for consistency we will continue using discrete theory and continue using the training data of the discrete wave equation model (\cref{sec:TrainingDataWave}).

We use standard principle component analysis\cite{BruntonKutz2019,trefethen97} to identify a linear projection map $\mathrm{pr} \colon \R^{M} \to \R^{M_\mathrm{red}}=:Q$ and a recovery map $R \colon Q \to \R^M$ such that $\mathrm{pr} \circ R = \mathrm{id}_{Q}$ with $M_\mathrm{red} < M$. Then a neural network model of a discrete Lagrangian $L^Q_d \colon Q \times Q \to \R$ is trained to recover the temporal dynamics.
To identify $\mathrm{pr}$, let
\[
\mathcal{U}^\mathrm{space} 
= \begin{pmatrix}
U^{(1)}, & U^{(2)},& \ldots,& U^{(K)}
\end{pmatrix} \in \R^{M K(N+1)}
\]
denote a matrix arranging the matrices $U^{(k)} = (u^i_j)_{0 \le i \le N}^{0 \le j < M}$ of training data horizontally. Consider its singular value decomposition $\mathcal{U}^\mathrm{space} = A \Sigma B^\top$. Let $A' = \begin{pmatrix}
a_1,& a_2
\end{pmatrix} \in \R^{M \times M_\mathrm{red}}$
contain the first $M_\mathrm{red}$ columns of $A$ and set $\mathrm{pr}(u) = {A'}^\top u$ and $R(q) = A' q$. For $M_\mathrm{red}=2$ the average relative reconstruction error is
\[
\frac{1}{MK(N+1)} \sum_{k=1}^{MK(N+1)} \frac{\| \mathfrak{u}_k - R(\mathrm{pr}(\mathfrak{u}_k))\|_2}{\| \mathfrak{u}_k \|_2}
\approx 0.0026,
\]
where $\mathfrak{u}_k$ denote the columns of $\mathcal{U}^\mathrm{space}$.

To learn the temporal dynamics on the latent space, consider a neural network model of $L^Q_d \colon Q \times Q \to \R$ (3 layers, interior layer of size 10, 170 parameters, activation function $\mathtt{softplus}$).
Let $q^{i, (k)} \in Q$ denote the projected training data $q^{i}_{(k)}=\mathrm{pr}( ({u^i_j}^{(k)})_{0 \le j <M} )$ ($1 \le k \le K, 0\le i \le N$) and let 
\[
\begin{split}
\mathcal{D}^Q
=\big\{ (q^{i-1}_{(k)},q^{i}_{(k)},q^{i+1}_{(k)}) &\, | \, 1 \le k \le K, 0<i<N  \big\}
\end{split}
\] be the collection of tipples in the projected training data. Consider the loss function $\ell^Q = \ell_{\mathrm{data}}^Q + \ell_{\mathrm{reg}}^Q$ with 
\begin{equation}
\ell_{\mathrm{data}}^Q = \sum_{ q_{\mathrm{stencil}} \in \mathcal{D}^Q} \mathrm{DEL}(L_d^Q)(q_{\mathrm{stencil}})^2
\end{equation}
with 
\begin{equation}\label{eq:DELLd2pt}
\mathrm{DEL}(L_d^Q)(q_{\mathrm{stencil}})
= \frac{\p}{\p q^i} \left( L_d^Q(q^{i-1},q^i) + L_d^Q(q^i,q^{i+1}) \right)
\end{equation}
where $q_{\mathrm{stencil}}=(q^{i-1},q^i,q^{i+1})$ and regularisation
\[
\ell_{\mathrm{reg}}^Q = \sum_{ q_{\mathrm{stencil}} \in \mathcal{D}^Q} 
\left\|\left(\frac{\p^2 L_d^Q}{\p q^i \p q^{i+1}}L_d^Q(q^i,q^{i+1})\right)^{-1}\right\|^2.
\]
The regularisation is an analogy to \eqref{eq:regLd3pt} for discrete Lagrangians of two input arguments optimised for forward propagation by solving \eqref{eq:DELLd2pt} for $q^{i+1}$. The summands are approximated with 3 inverse vector iterations (see \cref{rem:ApproxRho}).
Again, batch training is performed with a batch size of 10. The regularisation term $\ell_{\mathrm{reg}}^Q$ was scaled by $10^{-8}$ such that both terms of the loss function are of the same order of magnitude at the beginning of training with randomly initialised parameters of the neural network. After 54750 epochs of optimisation with $\mathtt{adam}$, $\ell_{\mathrm{data}}^Q < 1.19 \cdot 10^{-5}$, $\ell_{\mathrm{reg}}^Q < 1.54 \cdot 10^{-5}$.

\begin{figure}
	\includegraphics[width=0.45\linewidth]{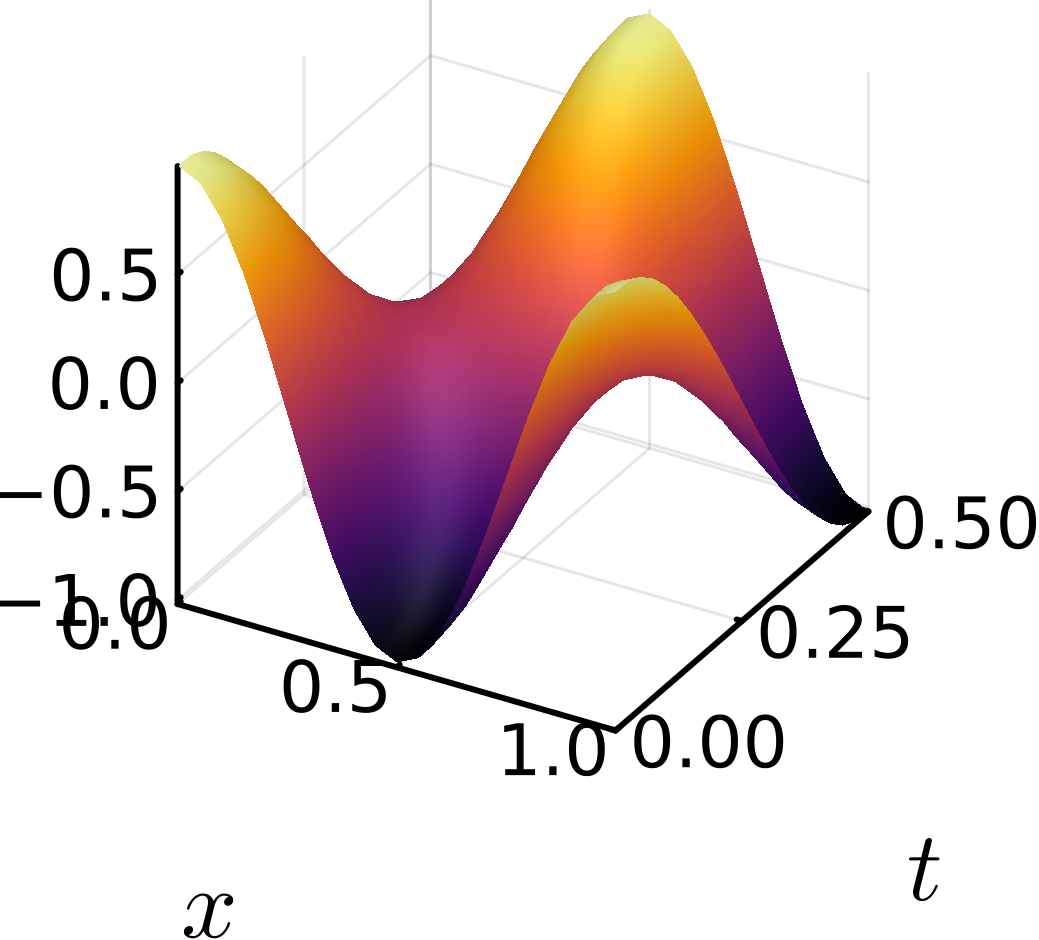}
	\includegraphics[width=0.45\linewidth]{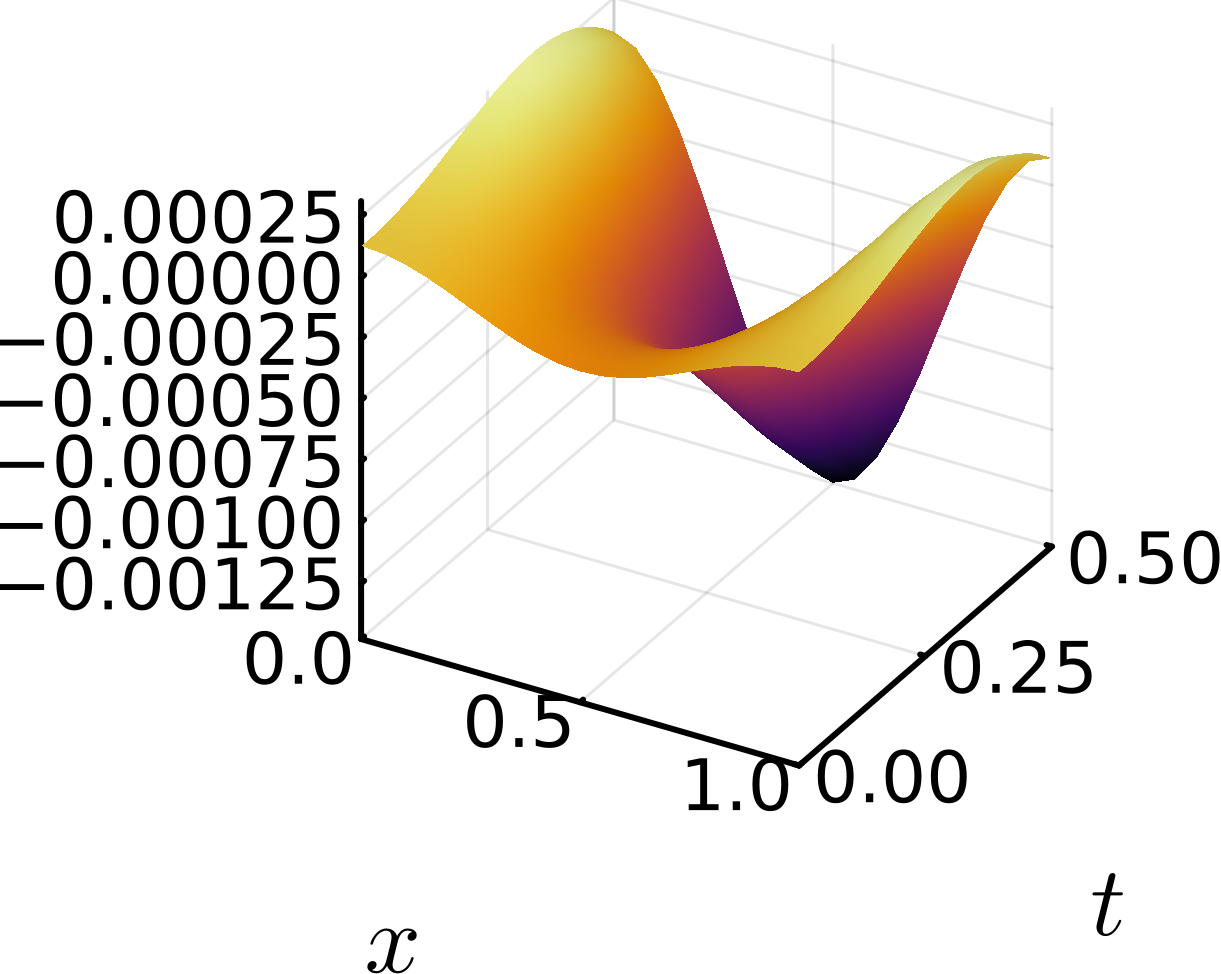}
	\includegraphics[width=0.45\linewidth]{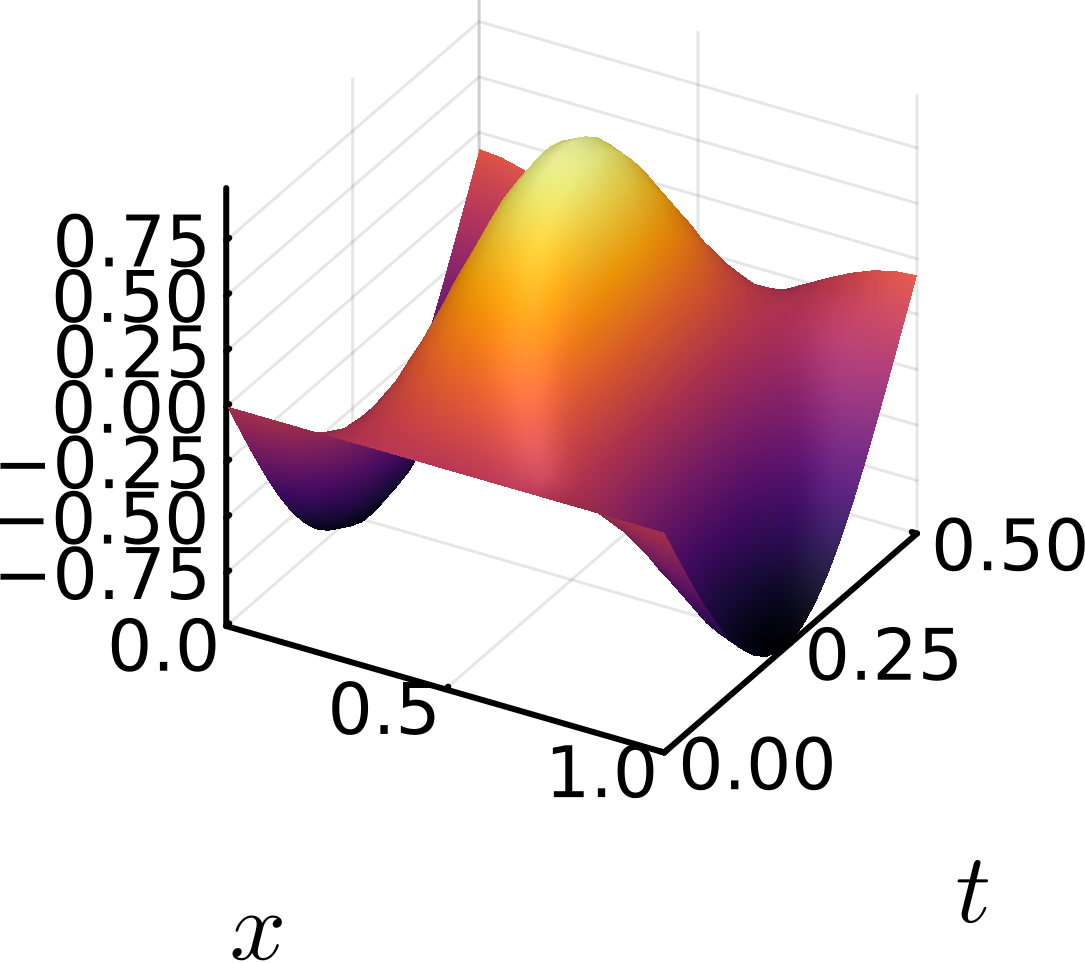}
	\includegraphics[width=0.45\linewidth]{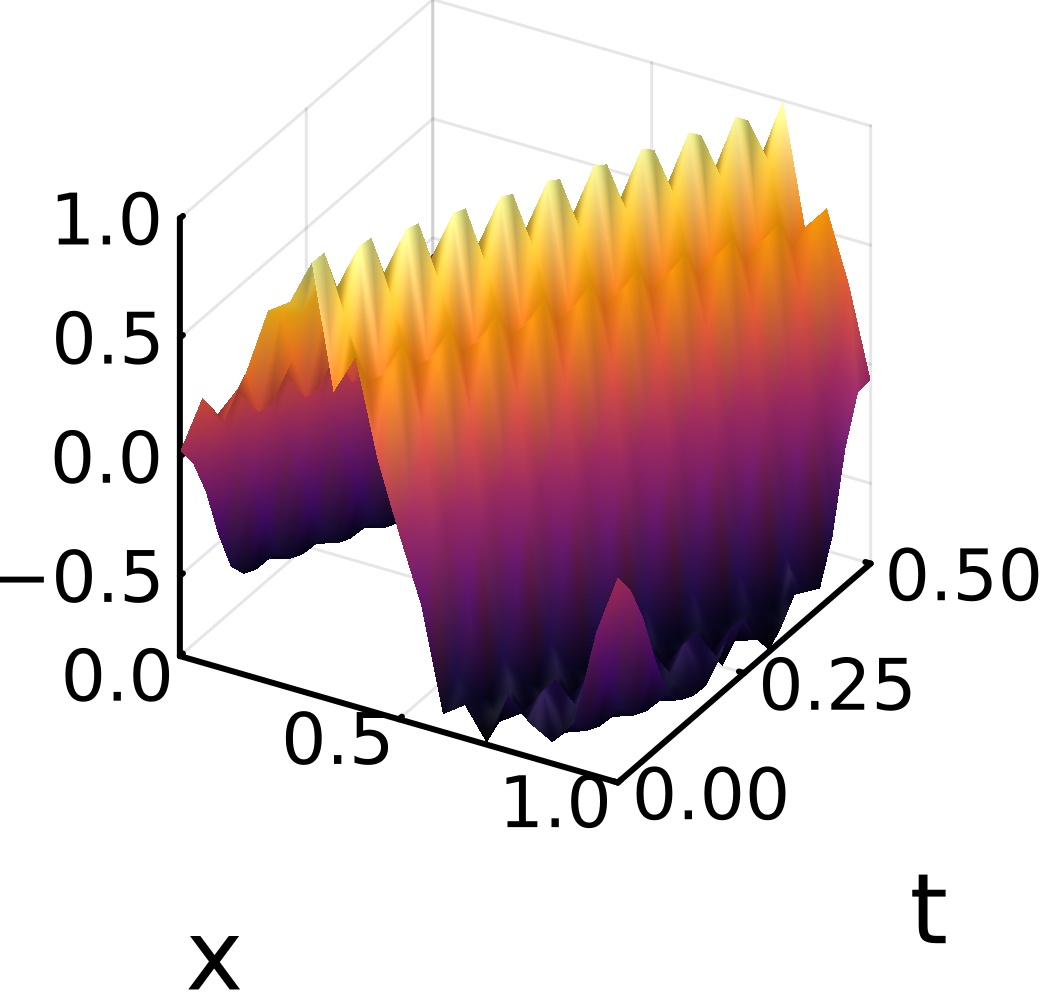}
	
	\caption{While the ROM model successfully predicts motions initialised close to training data (top left) it struggles to generalise (top right compare to top of \cref{fig:EvaluationWave}). Moreover, initialising with data of a true travelling wave does not produce a travelling wave (bottom left), while spurious travelling waves fulfil the discrete Euler--Lagrange equations when projected to the latent space (bottom right). }\label{fig:ROMExperiments}
\end{figure}
The trained model can recover the training data shown in \cref{fig:TrainingDataWave} in max-error norm $\| \cdot \|_\infty$ up to 0.003 or 0.006 when initialised with the exact data at time $t=0$, $t=\Delta t$. 
\Cref{fig:ROMExperiments} shows that while the ROM model successfully predicts dynamics with initial data close to training data (top left), it fails on initial data such as $u^0_j = u^1_j = \sin(4\pi j \Delta x)$, $j=0,\ldots,M-1$ (top right). This can be compared to the proposed discrete field theory model, which shows good performance on this task (see \cref{fig:EvaluationWave}).

Unsurprisingly, as the training data does not contain any travelling waves, it fails to produce a good prediction when initialised with the exact initial data of a travelling wave. Indeed, the reconstructed solution has no similarity to a travelling waves (\cref{fig:ROMExperiments} bottom left). In contrast, the proposed stencil based scheme produces an output that is visually undistinguishable (not displayed) from the exact travelling wave (left of \cref{fig:DiscreteWaveTW}).

The travelling wave locator for discrete field theories described in \cref{sec:TWExperimentWave} can easily be adapted to the ROM setting: as a loss function we use
\[
\ell_{\mathrm{wave}}^Q(c,(\hat{f})_{m=0}^{1+\lfloor \frac M2 \rfloor})
=
\sum_{i=1}^{N-1} \left\|\frac{\p}{\p q^i} \left( L_d^Q(q^{i-1},q^i) + L_d^Q(q^i,q^{i+1}) \right) \right\|^2
\]
with $q^i = \mathrm{pr}( (u^i_j)_{0\le j < M} )$, with $u^i_j = f(i \Delta t - c j\Delta x)$ for all $i,j$, and $f$ as in \eqref{eq:fTWFinder}.
We obtain the result $U^{\mathrm{TW}}_Q$ to the bottom right of \cref{fig:ROMExperiments}: indeed, the projection $\mathrm{pr}(U^{\mathrm{TW}}_Q)$ of $U^{\mathrm{TW}}_Q$ fulfils the discrete Euler--Lagrange equation for $L_d^Q$ on the latent space $Q$ up to
\begin{equation}
\left| \sum_{ q_{\mathrm{stencil}} \in \mathcal{D}^Q_\mathrm{TW}} \mathrm{DEL}(L_d^Q)(q_{\mathrm{stencil}})^2\ \right| < 3 \cdot 10^{-9},
\end{equation}
where $\mathcal{D}^Q_\mathrm{TW}$ denotes the triples in $\mathrm{pr}(U^{\mathrm{TW}}_Q)$. However, it is apparently not an accurate approximation of the exact travelling wave (left of \cref{fig:DiscreteWaveTW}).

We conclude that the ROM-based method achieves accurate predictions for initial values close to its training data but can struggles in generalisation tasks as well as in the travelling wave task, in contrast to the proposed stencil based model. This is expected since the latent space was identified based on training data which does not contain travelling waves.

\subsection{Schrödinger equation}

Consider the space-time domain $X=[0,T]  \times [0,b]/\sim$, $T=0.12$, $b=1$ with periodic boundary conditions in space, discretisation parameters $\Delta t = 0.01$, $\Delta x = 0.125$, and linear potential $V(r)= r$. The discrete lattice is denoted by $X_d$ and $M= 1/\Delta x = 8$ is the number of spatial interior grid points and $N = T/\Delta t=12$ the number of time steps.

To obtain training data, we use the 4-point Lagrangian density of \cref{ex:DiscreteSE} and compute 80 solutions: 
identifying $\mathbb{C} \cong \R^2$, $\Psi = \phi + \mathrm i p \mapsto u = \begin{pmatrix}\phi, & p\end{pmatrix}^\top$, the corresponding Lagrangian $L_{\Delta x}^{\Delta t}$ of \eqref{eq:LdelXdelT4} is degenerate and of the form as in \cref{ex:DegenerateL}. Therefore, position data at time $t=0$ is sufficient to initialise the numerical scheme of \cref{rem:comp34Ld} and no velocity data is required. Initial position data is generated based on randomly sampled, exponentially decaying Fourier modes in analogy to \cref{sec:TrainingDataWave}. \Cref{fig:TrainingDataSE} (top left) shows the real part of such a solution. 

\begin{figure}
\includegraphics[width=0.45\linewidth]{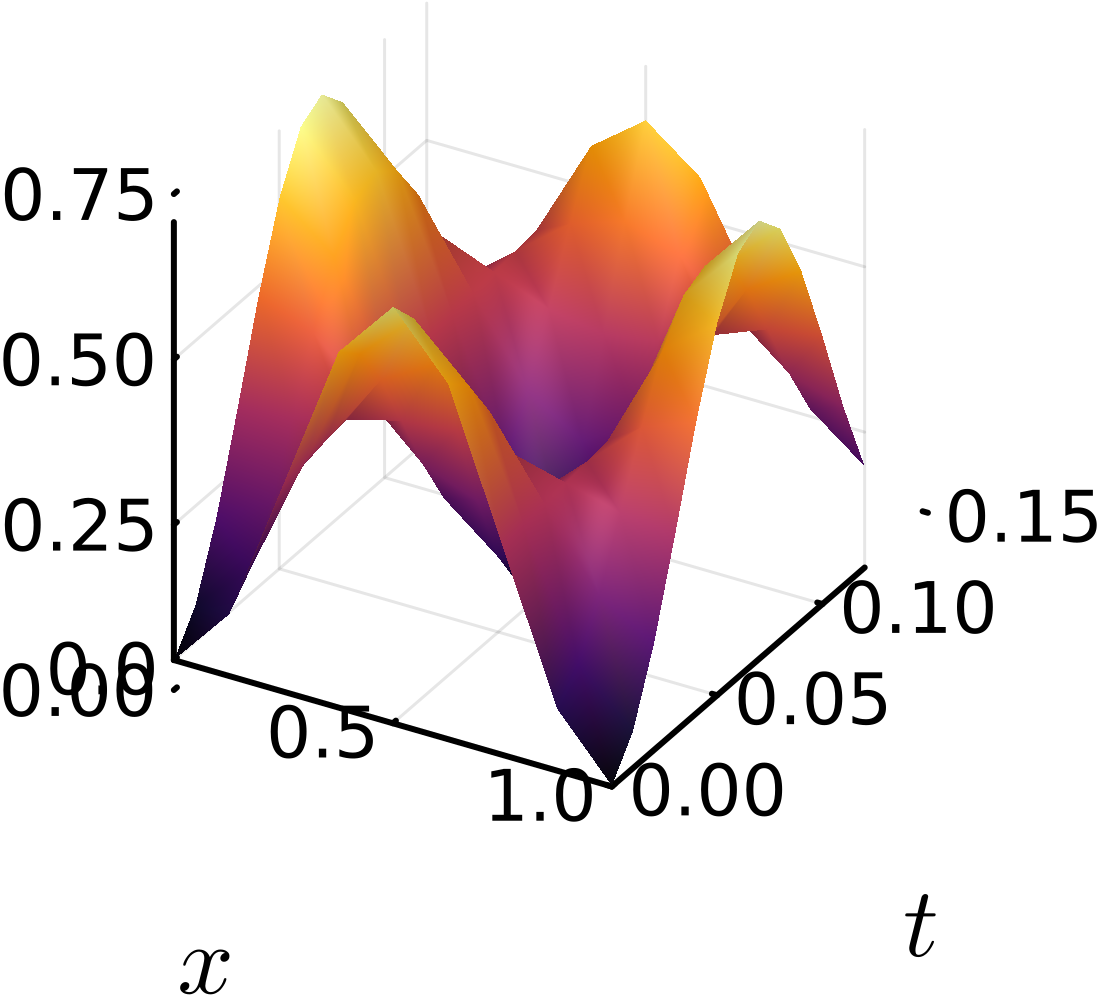}
\includegraphics[width=0.45\linewidth]{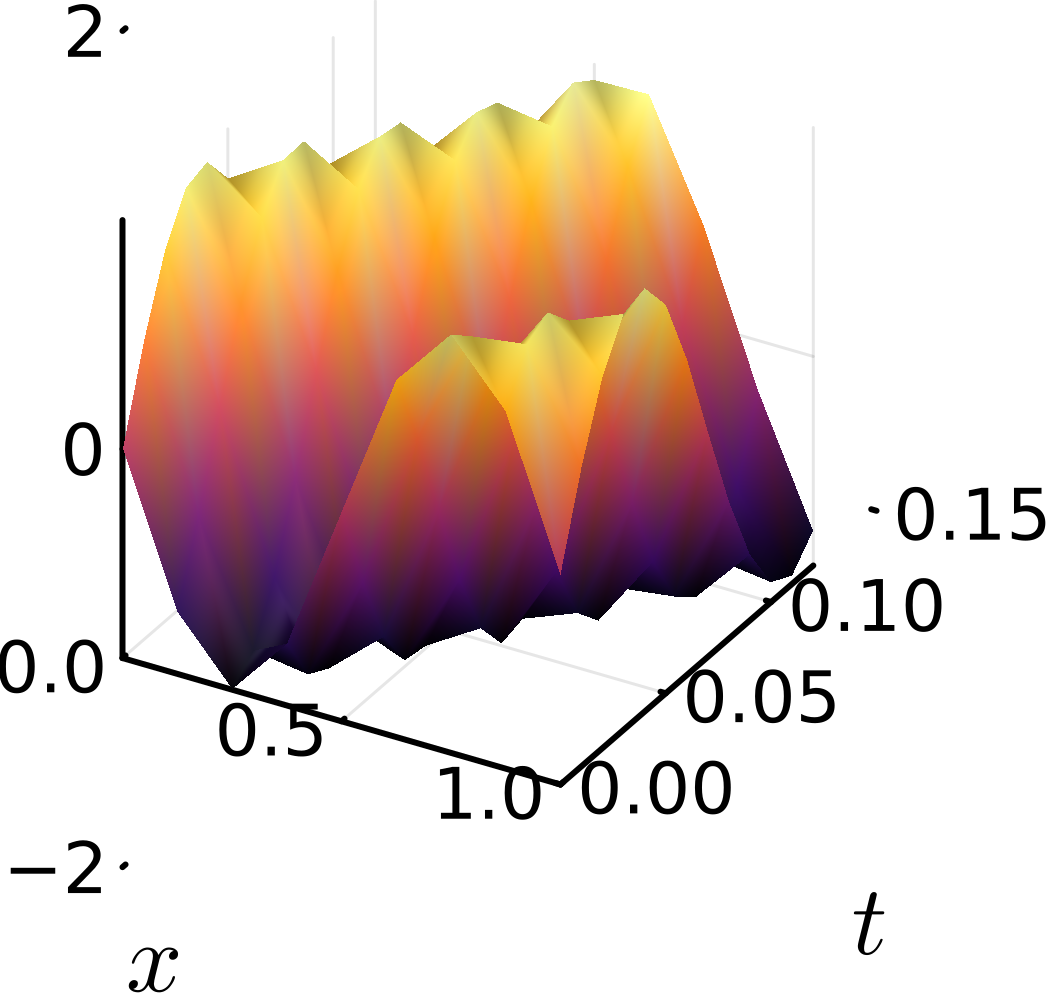}\\
\includegraphics[width=0.45\linewidth]{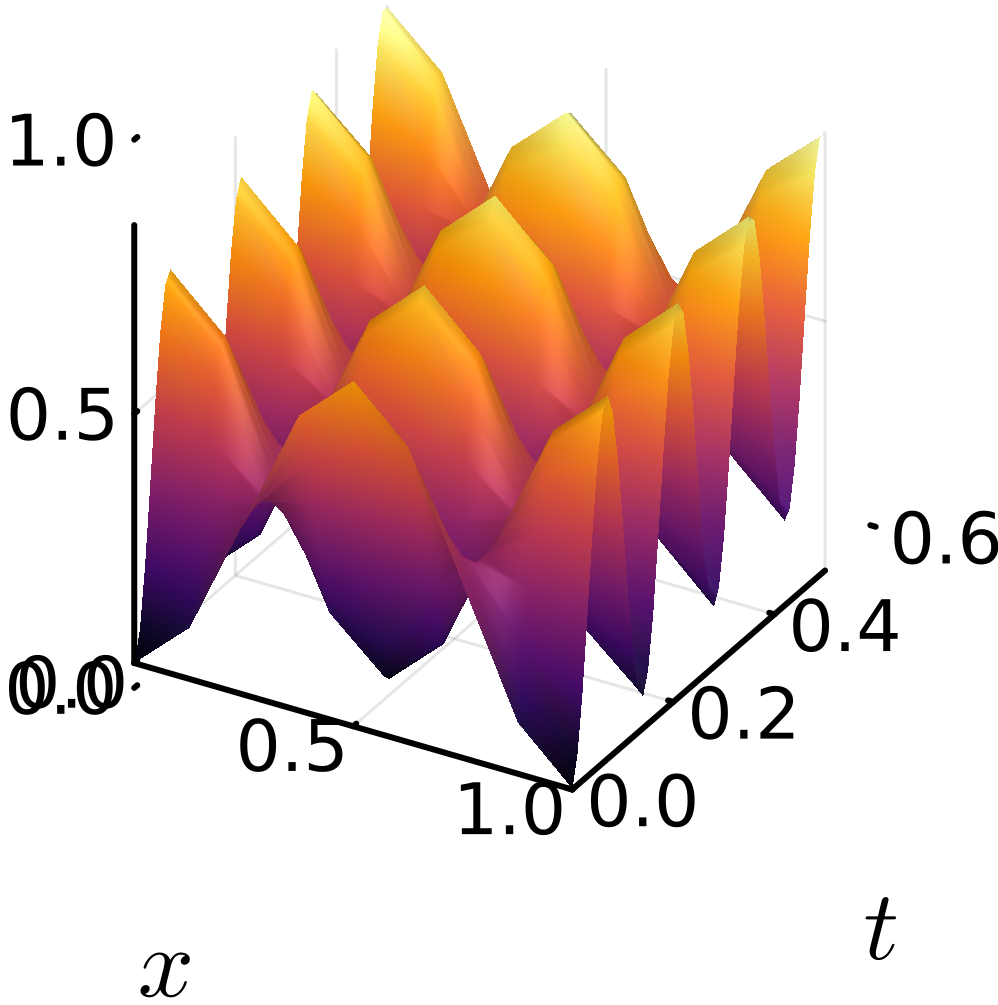}
\includegraphics[width=0.45\linewidth]{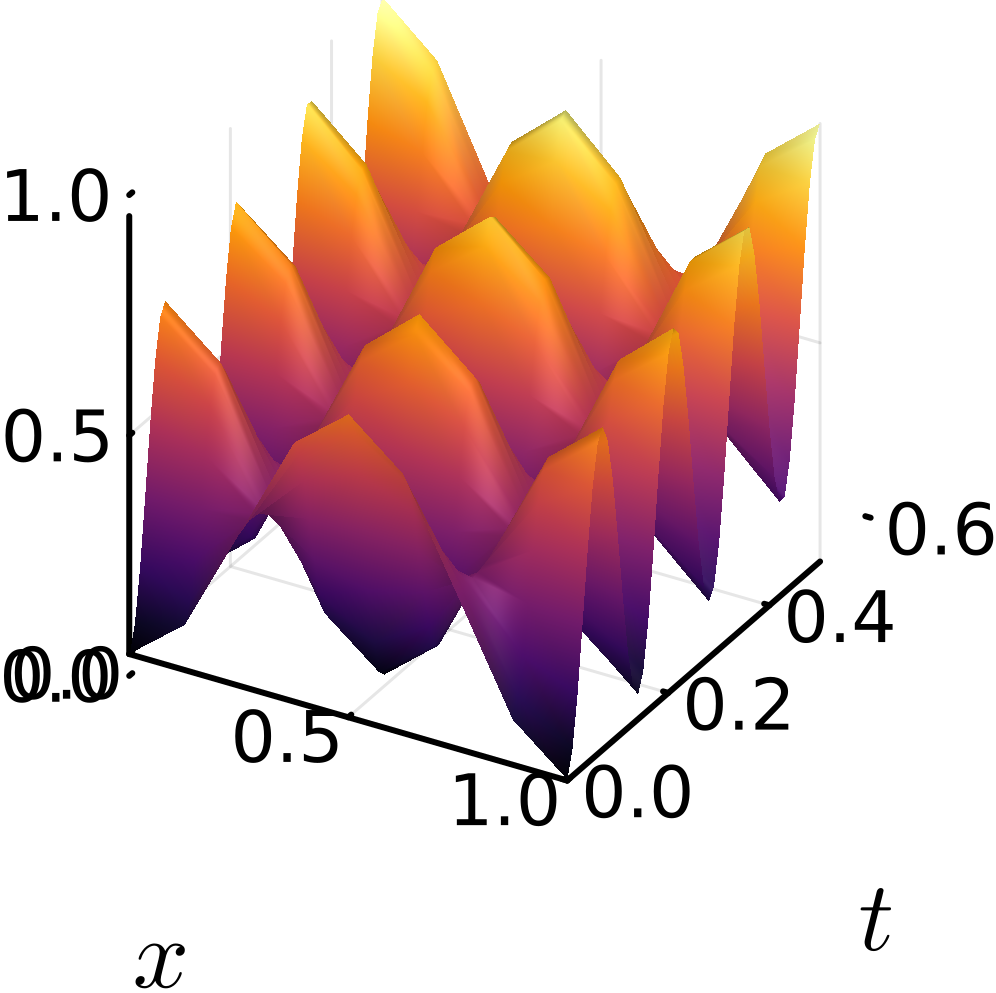}
\caption{Learned discrete Schrödinger equation. Top left: Sample of solution in training data (real part).
Top right: Learned model reproduces travelling wave when initialised from exact travelling wave data (compare to left of \cref{fig:DiscreteWaveTWSE}).
Bottom: Extrapolation of the solution with true discrete Lagrangian (left) and with the learned discrete Lagrangian (right) (error 0.01 in infinity norm).
	}\label{fig:TrainingDataSE}
\end{figure}

A 4-point Lagrangian $L_d \colon (\R^2)^4 \to \R$ (see \cref{ex:3ptLd}) is modelled as a neural network (2 hidden layers of width 12, activation $\mathtt{softplus}$) and fitted to the training data. As a loss function $\ell = \ell_{\mathrm{data}}+\ell_{\mathrm{reg}}$ we adapt the data consistency term $\ell_{\mathrm{data}}$ of \eqref{eq:elldataWave} to 4-point Lagrangians. As regularisation term $\ell_{\mathrm{reg}}$ we use the tamed regulariser \eqref{eq:regLdxdtTamed} to obtain a field theory optimised for temporal forward propagation. 
A loss of $\ell_{\mathrm{data}}<8.04 \cdot 10^{-5}$ and $\ell_{\mathrm{reg}}=0$ is reached after 1544 epochs of batch learning, where a batch consists of 2 blocks and a block contains all stencil data of 3 consecutive time steps. 

\Cref{fig:TrainingDataSE} shows that the trained model successfully extrapolates training data with an extrapolation error $\| |\Psi_\mathrm{true}-\Psi_\mathrm{predicted}|_\mathbb{C}\|_\infty \approx 0.01$ with $T^{\mathrm{extrapolate}} = 50 \Delta t  > 12 \Delta t =T$ (bottom figures).
Here we have used the notation $\| | \Phi |_\mathbb{C}\|_\infty = \max_{i,j} | \Phi^i_{j}|_{\mathbb{C}}$ for a complex matrix $\Phi \in \mathbb{C}^{(N+1)\times M}$.

Moreover, when the learned model is initialised with data $u^0=(u^0_j)_{0\le j < M}$, $u^1=(u^1_j)_{0\le j < M}$ of an exact travelling wave (\cref{rem:DiscreteWaveTWSE}, $m=1$), then it reproduces the exact travelling wave up to an error of $\||\Psi_\mathrm{true}-\Psi_\mathrm{predicted}|_{\mathbb C} \|_\infty \approx 0.19$.
(Compare top right of \cref{fig:TrainingDataSE} showing $\Psi_{\mathrm{predict}}$ with the left of \cref{fig:DiscreteWaveTWSE} showing $\Psi_\mathrm{true}$.)
Notice that travelling wave solutions are captured well in the trained model although the training data does not contain any travelling waves.

\section{Summary and Future Work}\label{sec:Summary}

We have developed a machine learning framework to learn discrete field theories from discrete observation data. Neural network models of discrete Lagrangian densities are fitted such that the discrete Euler Lagrange equations are fulfilled on the training data. 
The discrete Lagrangian densities are based on local stencils motivated by the theory of variational integration methods\cite{MarsdenWestVariationalIntegrators}.
Thus, the local nature of differential equations is reflected in the structure of our approach.

As Lagrangian densities are not uniquely determined by a dynamical system's motions, we present a way to systematically derive regularisation terms for the training procedure. The regularisers guide the optimisation process to a numerical analysis informed model of a discrete Lagrangian density with good convergence properties when used in numerical simulations.

In comparison to architectures that project spatial dimensions to a data-driven latent space (data-driven reduced order models - ROMs) and then learn the temporal dynamics, our framework learns the discrete field theory on a space-time lattice. A reduction of computational complexity of the learning task in our approach is obtained by the use of local stencils rather than by a contraction of dimensionality.
By a comparison with a simple, data-driven ROM based method, we demonstrate that learning on the lattice can be advantageous when highly symmetric solutions such as travelling waves are sought in the data-driven model but are not present in the training data set. Moreover, locality, symmetries, and fundamental principles such as Palais' principle of symmetric criticality can be easier to preserve.

However, while locality of the stencil can be exploited during training in the evaluation of a data consistency term $\ell_{\mathrm{data}}$, depending on boundary conditions, numerical stability concerns, or particularities of an employed stencil, numerical integration of the trained model of a discrete field theory can require solving high-dimensional problems.
Moreover, when the regulariser employed during training is based on the mentioned numerical method, then it can inherit the computational complexity.
While we have shown that one can train the field theory on a coarse mesh while efficiently using the observed training data from the finer mesh, another option is to develop machine learning architectures tailored to numerical integrators developed for high-dimensional problems such as low-rank integrators\cite{WalachThesis,Lubich2018}. This will be the subject of future work.



Further, the machine learning architecture based on discrete Lagrangian densities for local stencils learns structurally simple solutions well and extrapolates correctly to highly symmetric solutions, such as travelling waves, even when these are not in the training data set.
Numerical integration theory offers a diverse set of discretisation methods suitable for a variety of purposes including handling of shock waves or highly oscillatory solutions. In future work, we plan to exploit these methods within the machine learning context to develop reliable architectures for systems with challenging dynamical features.

Additionally,
we propose to learn a representation of a field theory that is as symmetric as possible to improve qualitative features of numerical simulations as we have done in the ode case\cite{SymLNN}. A combination with this technique will allow for the identification of conservation laws of the underlying dynamical system.





\begin{acknowledgments}
C.~Offen acknowledges the Ministerium für Kultur und Wissenschaft des Landes Nordrhein-Westfalen and computing time provided by
the Paderborn Center for Parallel Computing (PC2).
\end{acknowledgments}

\section*{Author declarations}
The authors have no conflicts to disclose.

\section*{Data availability}

The data that support the findings of this study are openly available in the GitHub repository Christian-Offen/DLNN\_pde at
\url{https://github.com/Christian-Offen/DLNN_pde}. An archived version\cite{DLNNPDESoftware} of release v1.0 of the GitHub repository is openly available at \url{https://dx.doi.org/10.5281/zenodo.8245861}.

\appendix

\section{Complementary remarks}\label{sec:Remarks}

\subsection{Gauge freedom}

\begin{Remark}[Gauge freedom - Continuous setting]\label{rem:GaugeFreedom}
	For a continuously differentiable function $F=(F^1,\ldots,F^n)$ in $(x,u)$, denote the total divergence \[
	\nabla_x F(x,u(x)) 
	= \frac{\p }{\p x_0} (F^1(x,u(x)))+\ldots+\frac{\p }{\p x_n} (F^n(x,u(x))).
	\]
	When $\nabla_x F(x,u(x))$ is added to a Lagrangian $L$, then the action $S$ \eqref{eq:ActionFunctionalS} can at most change by a constant, according to the divergence theorem. Therefore, the Euler--Lagrange equations \eqref{eq:EL} are not effected, i.e.\
	\[\mathrm{EL}\left(L+ \nabla_x F(x,u(x))\right) = \mathrm{EL}(L).\]
\end{Remark}

\begin{Remark}[Gauge freedom - Discrete setting]\label{rem:gaugeLd34}
	In analogy to \cref{rem:GaugeFreedom}, discrete Lagrangians are not uniquely determined by a system's discrete motions. Indeed, two 3-point Lagrangians $L_d(a,b,c)$ and $\tilde L_d(a,b,c)$ as in \cref{ex:3ptLd} yield the same discrete Euler--Lagrange equations \eqref{eq:DEL3pt} if they differ by a discrete divergence, i.e.\ if $L_d(a,b,c)-\tilde L_d(a,b,c)$ is of the form
	\begin{equation}\label{eq:gaugeLd3}
		\chi_1(a)-\chi_1(b) + \chi_2(a)-\chi_2(c) +\chi_3(b)-\chi_3(c)
	\end{equation}
	for a differentiable function $\chi = (\chi_1,\chi_2,\chi_3) \colon X \to \R^3$. Analogous observations apply to other types of discrete Lagrangians.
\end{Remark}

\subsection{Use of 7 and 9 point stencil for forward propagations}

Below additional remarks on the computation with the considered 7 and 9 point numerical stencils (\cref{ex:3ptLd}) as described in \cref{rem:comp34Ld}.

\begin{Remark}[Numerical stability of discrete SE]\label{rem:Stability}
	Solutions to the discrete Schrödinger equation \eqref{eq:SEDEL} can be computed as explained in \cref{rem:comp34Ld}. Notice that when the discrete Lagrangian $L_d$ of \eqref{eq:SELd} is expressed in the variable $(u,\phi)$, where $\Psi = u + \mathrm i \phi$, then $L_{\Delta x}^{\Delta t}$ of \cref{rem:comp34Ld} is of the form \eqref{eq:DegenerateL} of \cref{ex:DegenerateL}. Therefore, the computation can be initialised from initial data $(\Psi_{j}^0)_{0\le j\le N_1}$ and boundary conditions $(\Psi_{0}^{i})^{0<i\le N_0}$, $(\Psi_{N_1}^{i})^{0<i\le N_0}$ and is numerically stable\cite{Marsden2002DegenerateLagrangians}. No velocity data is required for the initialisation.
\end{Remark}

\subsection{Numerical approximation of constant in Newton iterations convergence results }\label{sec:ComputeRhoAst}

We provide details on how to approximate the constant $\rho^\ast$ in \cref{prop:SolveDELNewtonU}, which relates to the convergence speed of Newton iterations.

\begin{Remark}\label{rem:periodicBD}
	The matrix $\Lambda=\frac{\p^2 L_{\Delta x}^{\Delta t}}{\p U^{i}\p U^{i+1}}(U^i,U^{i+1})$ of \eqref{eq:rhoU} has the following block-tri-diagonal structure
	\begin{equation}
		\Lambda = \begin{pmatrix}
			A_1&B_1\\
			C_2&A_2&B_2\\
			&\ddots&\ddots&\ddots\\
			&&C_{N_1-2}&A_{N_1-2}&B_{N_1-2}\\
			&&&C_{N_1-1}&A_{N_1-1}\\
		\end{pmatrix} \in (\R^d)^{(N_1-1) \times (N_1-1)},
	\end{equation}
	where
	\begin{equation}\label{eq:ABCLambda}
		\begin{split}
			A_j=\frac{\p^2}{\p u^i_j \p u^{i+1}_j}
			\big(
			&\phantom{+}L_d(u^{i}_{j},u^{i+1}_{j},u^{i}_{j+1},u^{i+1}_{j+1})\\
			&+ L_d(u^{i}_{j-1},u^{i+1}_{j-1},u^{i}_{j},u^{i+1}_{j})
			\big)\\
			B_j=\frac{\p^2}{\p u^i_j \p u^{i+1}_{j+1}}
			\big(
			&\phantom{+}L_d(u^{i}_{j},u^{i+1}_{j},u^{i}_{j+1},u^{i+1}_{j+1})\big)\\
			C_j=\frac{\p^2}{\p u^i_j \p u^{i+1}_{j-1}}
			\big(
			&\phantom{+} L_d(u^{i}_{j-1},u^{i+1}_{j-1},u^{i}_{j},u^{i+1}_{j})
			\big).
		\end{split}
	\end{equation}
	
	If periodic boundary conditions rather than Dirichlet boundary conditions are considered then the spatial mesh has $N_1$ interior mesh points and
	\begin{equation}
		\Lambda = \begin{pmatrix}
			A_0&B_0& & & C_0\\
			C_1&A_1&B_1\\
			&\ddots&\ddots&\ddots\\
			&&C_{N_1-2}&A_{N_1-2}&B_{N_1-2}\\
			B_{N_1-1}&&&C_{N_1-1}&A_{N_1-1}\\
		\end{pmatrix} \in (\R^d)^{N_1 \times N_1}
	\end{equation}
	with $A_j$, $B_j$, $C_j$ as in \eqref{eq:ABCLambda}
\end{Remark}

\begin{Remark}[Numerical computation of $\| \Lambda^{-1}\|$]\label{rem:ApproxRho}
	Let $\| \Lambda \|$ denote the spectral norm of $\Lambda$. The value $\rho^\ast = \| \Lambda^{-1}\|^{-1}$ is the smallest singular value $\sigma$ of $\Lambda$. Its square $\sigma^2$ can be approximated using inverse vector iterations\cite{Deuflhard2003EVChapter} applied to $M=\Lambda^\top \Lambda$: given a random start vector $v'$, iterate
	$v \leftarrow v'$, $v \leftarrow M\backslash v$, $v \leftarrow v/\|v\|$, $\sigma^2 \leftarrow (v^\top v')^{-1}$.
	Here $M\backslash v$ denotes the solution of the linear equation $M x = v$. For this a Cholesky factorisation of $M$ can be computed and used throughout the iteration.
	While in our case sparse structure of $\Lambda$ can be exploited, we refer to the literature\cite{Deuflhard2003EVChapter} for methods that avoid the explicit computation of the matrix product $\Lambda^\top \Lambda$ in the computation of singular values.
	Further, more elaborate tensor representations\cite{RoehringZoellner2022} can be used, especially when generalising to higher-dimensional space-time lattices.
	Approximations to $\sigma^2=(\rho^\ast)^2$ will be used in the machine learning framework introduced in the following sections.
\end{Remark}

\section{Exact discrete Lagrangians}\label{sec:ExactLd}

We can view the various discrete Lagrangians introduced in \cref{sec:DiscreteVarPrinciples} as discretisations of exact discrete Lagrangians. An exact discrete Lagrangian to a continuous field theory has the property that solutions to the discrete Euler--Lagrange equations coincide with the motions of the continuous theory on the grid. 
For this we extend our definition of discrete Lagrangians from \cref{sec:DiscreteVarPrinciples}. For a more detailed introduction and further information we refer to \cite{Marsden2001}.

With the notation of \cref{sec:DiscreteVarPrinciples}, let $\p X^l$ denote the boundary of the hypercube $X^l$. Let $\p X_d = \bigcup_{l \in \mathcal I} \p X^l \subset \R^{n+1}$ denote the $n$-skeleton of $X$.

\begin{Definition}[Discrete Lagrangian and action]\label{def:Sd}
	A {\em discrete Lagrangian} $L_d \colon \bigcup_l (\p X^l \times \R^k) \to \R$ maps any graph of a boundary function $u_{\p X^l} \colon \p X^l \to \R^k$ to a real number. The corresponding {\em discrete action} $S_d \colon \p X_d \times \R^k \to \R$ maps any graph of a boundary function $u_{\p X_d}\colon \p X_d \to \R^k$ on the $n$-skeleton to
	\begin{equation}\label{eq:DiscreteS}
		S_d(u_{\p X_d}) = \sum_{l \in \mathcal I} L_d(u_{\p X_d}|_{\p X^l}).
	\end{equation}
	Here $u_{\p X_d}|_{\p X^l}$ denotes the restriction of $u_{\p X_d}$ to the boundary $\p X^l$ of the cube $X^l$.	
\end{Definition}

Notice that in \cref{def:Sd} $L_d$ is allowed to depend on the graph of the boundary function rather than just its values, which is implicit in the notation.

\begin{Definition}[Discrete Euler--Lagrange equations]\label{def:DEL}
	Let $L_d$ be a discrete Lagrangian. For each interior vertex $v \in \mathring X_d$ let $\mathcal{I}_v \subset \mathcal{I}$ contain the indices of hypercubes $X^l$ containing $v$. The {\em discrete Euler--Lagrange equations} are given as
	\begin{equation}\label{eq:DEL}
		\sum_{l \in I_v} \frac{\delta L_d}{\delta u_{\p X^l}} (u_{\p X_d}|_{\p X^l}) =0
	\end{equation}
	for $u_{\p X_d} \colon \p X_d \to \R^k$. Here $ \frac{\delta L_d}{\delta u_{\p X^l}}$ denotes the variational derivative.
\end{Definition}

\begin{Example}[Exact discrete Lagrangian]
	For a Lagrangian $L$ as in \eqref{eq:ActionFunctionalS} we define the {\em exact discrete Lagrangian}
	\begin{equation}\label{eq:ExactLd}
		L_d^{\mathrm{exact}} (u_{\p X^l}) = \int_{X^l} L(x,u(x),u_{x_0}(x),\ldots,u_{x_n}(x)) \d x,
	\end{equation}
	where $u \colon X^l \to \R^k$ is the solution (assuming existence/uniqueness) of the Euler--Lagrange equation $\mathrm{EL}(L) =0$ subject to the boundary condition $u|_{\p X^l} = u_{\p X^l}$.
\end{Example}


We conclude with the following observations.
	
	\begin{itemize}
		\item If $u \colon X \to \R^k$ solves the Euler--Lagrange equation \eqref{eq:EL} then $S(u)=S_d^{\mathrm{exact}}(u|_{\p X_d})$, when $S_d$ is the discrete action \eqref{eq:DiscreteS} for the exact discrete Lagrangian $L_d = L_d^{\mathrm{exact}}$.
		
		\item 
		In the ode case $\dim X=1$ we have $\p X_d = X_d = \{ x_0^0,\ldots,x_0^{N_0}\}$ and the exact discrete action is defined for functions on the mesh $X_d$.
		
		\item
		The discrete Lagrangians \eqref{eq:LdWave2d} of \cref{ex:DiscreteWave} and \eqref{eq:SELd} of \cref{ex:DiscreteSE} can be obtained from \eqref{eq:ExactLd} by quadrature.
		
	\end{itemize}
	

\section{Convergence of Newton iterations to solve the 7-point stencil}\label{sec:ProofConvergence7Stencil}

We provide a proof of \cref{prop:SolveDELNewton}.

\begin{Proposition*}\label{prop:SolveDELNewtonAppendix}
	Let $u^{i}_{j}$, $u^{i+1}_{j}$, $u^{i}_{j+1}$, $u^{i-1}_{j}$, $u^{i-1}_{j+1}$, $u^{i}_{j-1}$, $u^{i+1}_{j-1}$ such that the 7-point stencil \eqref{eq:DEL3pt} is fulfilled. Let $\mathcal O\subset \R^d$ be a convex neighbourhood of $u^\ast=u^{i+1}_{j}$, $\| \cdot \|$ a norm of $\R^d$ inducing an operator norm on $\R^{d \times d}$.
	Define
	\[p(u) :=\frac{\p^2 L_d}{\p u^{i}_{j}\p u}(u^i_j,u,u^i_{j+1}) \in \R^{d \times d}\]
	and let $\theta$ and $\overline{\theta}$ be Lipschitz constants on $\mathcal O$ for $p$
	and for $\mathrm{inv} \circ p$, respectively, where $\mathrm{inv}$ denotes matrix inversion.
	Let
	\begin{equation}\label{eq:rhoAppendix}
		\rho^\ast := \left\|\mathrm{inv}(p(u^\ast))\right\|
		= \left\|\left(\frac{\p^2 L_d}{\p u^{i}_{j}\p u^\ast}(u^i_j,u^\ast,u^i_{j+1})\right)^{-1}\right\|
	\end{equation}
	and let $f(u^{(n)})$ denote the left hand side of \eqref{eq:DEL3pt} with $u_j^{i+1}$ replaced by $u^{(n)}$.
	If $\|{u}^{(0)}-{u}^\ast\| \le \min\left(\frac{\rho^\ast}{\overline{\theta}}, \frac{1}{2\theta\rho^\ast}\right)$ for ${u}^{(0)}\in \mathcal{O}$, then the Newton iterations
	\[{u}^{(n+1)}:= {u}^{(n)} - \mathrm{inv}(p(u^{(n)})) f(u^{(n)})\]
	converge quadratically against ${u}^{\ast}$, i.e.\
	\begin{equation}\label{eq:NewtonQuadraticConvergenceAppendix}
		\|{u}^{(n+1)} - {u}^{\ast} \| \le \rho^\ast \theta \|{u}^{(n)} - {u}^{\ast} \|^2. 
	\end{equation}
	
\end{Proposition*}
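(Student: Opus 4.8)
The plan is to recognise this as a specialisation of the standard Newton--Kantorovich quadratic convergence estimate, where the crucial structural input is that $p$ is precisely the Jacobian of $f$. First I would verify this: in the discrete Euler--Lagrange equation \eqref{eq:DEL3pt} only the first summand $L_d(u^i_j,u^{i+1}_j,u^i_{j+1})$ depends on the variable $u^{i+1}_j$, so writing $f(u)$ for the left-hand side with $u^{i+1}_j$ replaced by $u$ gives $f(u) = \frac{\p}{\p u^i_j} L_d(u^i_j,u,u^i_{j+1}) + (\text{terms constant in } u)$, whence $\D f(u) = \frac{\p^2 L_d}{\p u^i_j \p u}(u^i_j,u,u^i_{j+1}) = p(u)$. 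Thus the iteration ${u}^{(n+1)} = {u}^{(n)} - \mathrm{inv}(p(u^{(n)})) f(u^{(n)})$ is exactly Newton's method for $f(u)=0$, and $u^\ast$ is a root of $f$ since \eqref{eq:DEL3pt} holds by hypothesis.

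Next I would set up the error recursion. Using $f(u^\ast)=0$ and the fundamental theorem of calculus along the segment $s \mapsto u^\ast + s(u^{(n)}-u^\ast)$, which lies in $\mathcal O$ by convexity, I would write
\[
u^{(n+1)} - u^\ast = \mathrm{inv}(p(u^{(n)})) \int_0^1 \bigl( p(u^{(n)}) - p(u^\ast + s(u^{(n)}-u^\ast)) \bigr)(u^{(n)}-u^\ast)\, \d s.
\]
The integrand is controlled by the Lipschitz constant $\theta$ of $p$: the norm of the bracketed difference is at most $\theta(1-s)\|u^{(n)}-u^\ast\|$, and integrating against $\int_0^1 (1-s)\,\d s = \tfrac12$ contributes a factor $\tfrac{\theta}{2}\|u^{(n)}-u^\ast\|^2$. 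I would then bound the prefactor using the Lipschitz constant $\overline\theta$ for $\mathrm{inv}\circ p$ together with $\rho^\ast = \|\mathrm{inv}(p(u^\ast))\|$, giving $\|\mathrm{inv}(p(u^{(n)}))\| \le \rho^\ast + \overline\theta\|u^{(n)}-u^\ast\| \le 2\rho^\ast$ whenever $\|u^{(n)}-u^\ast\| \le \rho^\ast/\overline\theta$. Combining the two estimates, the factors $\tfrac12$ and $2$ cancel to yield exactly \eqref{eq:NewtonQuadraticConvergence}, namely $\|u^{(n+1)}-u^\ast\| \le \rho^\ast\theta\|u^{(n)}-u^\ast\|^2$.

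Finally, I would close the loop by an induction ensuring both hypotheses persist along the iteration. The role of the second bound $\|u^{(0)}-u^\ast\| \le \tfrac{1}{2\theta\rho^\ast}$ is that, once the quadratic estimate holds, $\|u^{(n+1)}-u^\ast\| \le (\rho^\ast\theta\|u^{(n)}-u^\ast\|)\,\|u^{(n)}-u^\ast\| \le \tfrac12\|u^{(n)}-u^\ast\|$, so the error at least halves at each step. By induction every iterate stays inside the initial ball about $u^\ast$, which keeps both conditions $\|u^{(n)}-u^\ast\| \le \rho^\ast/\overline\theta$ and $\|u^{(n)}-u^\ast\| \le \tfrac{1}{2\theta\rho^\ast}$ valid and, by convexity of $\mathcal O$, keeps the segments used in the fundamental theorem of calculus inside $\mathcal O$.

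The main obstacle is precisely this self-consistency bookkeeping rather than any individual inequality: one must check that the two bounds appearing in the $\min$ are preserved \emph{simultaneously}, so that the estimate produced at step $n$ is exactly what is needed to license the same argument at step $n+1$. Tracking the constants so that the $\tfrac12$ from the integral and the $2$ from the inverse bound combine cleanly into $\rho^\ast\theta$, while verifying the iterates never leave $\mathcal O$, is where the care lies.
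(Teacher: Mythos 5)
Your proposal is correct and follows essentially the same route as the paper's proof in Appendix C: identify $p$ with the Jacobian of $f$, bound $\|\mathrm{inv}(p(u^{(n)}))\|\le 2\rho^\ast$ via the Lipschitz constant $\overline{\theta}$, use the integral form of the mean value theorem with the Lipschitz constant $\theta$ to get the factor $\tfrac{\theta}{2}\|u^{(n)}-u^\ast\|^2$, and close the induction with the halving of the error from the bound $\tfrac{1}{2\theta\rho^\ast}$. The only (welcome) difference is that you make explicit the observation that only the first summand of the stencil depends on $u^{i+1}_j$, so that $\D f = p$, which the paper leaves implicit.
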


\begin{proof}
	We adapt standard estimates for Newton's method (see \cite[\S 4]{Deuflhard2003}, for instance) to the considered setting.
	Let $f \colon \mathcal{O} \to \R^d$ with
	\begin{equation*}
		\begin{split}
			f(u)=\frac{\p}{\p u^i_j}
			\big(
			&\phantom{+}L_d(u^{i}_{j},u,u^{i}_{j+1})
			+ L_d(u^{i-1}_{j},u^{i}_{j},u^{i-1}_{j+1})\\
			&+ L_d(u^{i}_{j-1},u^{i+1}_{j-1},u^{i}_{j})
			\big)
		\end{split}
	\end{equation*}
	With this definition, $f(u^\ast)=0$, $\theta$ is a Lipschitz constant for $\D f \colon \mathcal{O} \to \R^{d \times d}$, $u \mapsto \D f(u)$, $\overline{\theta}$ is a Lipschitz constant for $\mathrm{inv} \circ \D f \colon \mathcal{O} \to \R^{d \times d}$, $u \mapsto \D f(u)^{-1}$, and $\rho^\ast = \| \D f(u^\ast)^{-1} \|$.
	Here $\D f(u)$ denotes the Jacobian matrix of $f$ at $u \in \mathcal O$.
	
	Assume that for $n \in \N$ an iterate $u^{(n)} \in \mathcal{O}$ fulfils $\|u^{(n)}-u^\ast\| \le \min\left(\frac{\rho^\ast}{\overline{\theta}}, \frac{1}{2\theta\rho^\ast}\right)$. Then \begin{equation*}\begin{split}
			\| \D f(u^{(n)})^{-1} &\| 
			= \| \D f(u^{(n)})^{-1} - \D f(u^{\ast})^{-1} + \D f(u^{\ast})^{-1}\| \\
			&\le \| \D f(u^{(n)})^{-1} - \D f(u^{\ast})^{-1} \| + \rho^\ast \\
			&\le \underbrace{\overline{\theta} \|u^{(n)} - u^\ast \|}_{\le \rho^\ast} + \rho^\ast \le 2 \rho^\ast.
	\end{split}\end{equation*}
	For the next iterate $u^{(n+1)} = u^{(n)} - \D f(u^{(n)})^{-1} f(u^{(n)})$ the distance to $u^\ast$ can be bounded:

	\begin{align*}
		\| &u^{(n+1)} - u^\ast \| \\
		&= \| u^{(n)} - u^\ast - \D f(u^{(n)})^{-1} (f(u^{(n)}) - \underbrace{f(u^\ast)}_{=0})  \| \\
		&\le \underbrace{\|\D f(u^{(n)})^{-1}  \|}_{\le 2 \rho^\ast}
		\Big\| \underbrace{\D f(u^{(n)})(u^{(n)}-u^\ast)}_{=\int_0^1 \D f(u^{(n)})(u^{(n)}-u^\ast) \d t }\\
		&\phantom{\|\D f(u^{(n)})^{-1}  \|}- \underbrace{(f(u^{(n)}) - f(u^\ast) )}_{\int_0^1 \D f (u^{(n)} + t (u^\ast -u^{(n)}))(u^{(n)} - u^\ast) \d t }  \Big\| \\
		&\le 2 \rho^\ast \Big\| \int_0^1 \left(\D f(u^{(n)}) - \D f(u^{(n)} + t(u^\ast - u^{(n)}))\right)\\
		&\qquad \qquad \qquad \qquad \qquad \qquad \qquad \qquad  \cdot (u^{(n)}-u^\ast) \d t \Big\|\\
		&\le 2 \rho^\ast  \int_0^1 \underbrace{\left\|\D f(u^{(n)}) - \D f(u^{(n)} + t(u^\ast - u^{(n)}))\right\|}_{\le \theta t \|u^{(n)}-u^\ast\| }\\
		&\qquad \qquad \qquad \qquad \qquad \qquad \qquad \qquad 
		\cdot \left\|u^{(n)}-u^\ast\right\| \d t\\
		&\le 2 \rho^\ast \theta \left\|u^{(n)}-u^\ast\right\|^2  \int_0^1 t \d t\\
		&=\rho^\ast \theta \left\|u^{(n)}-u^\ast\right\|^2
	\end{align*}
	
	Moreover, since $\left\|u^{(n)}-u^\ast\right\| \le  (2\theta \rho^\ast)^{-1}$, we have
	$\|u^{(n+1)} - u^\ast \| \le  \frac 12 \|u^{(n)} - u^\ast \|$. By induction, the Newton iterations converge against $u^\ast$ and \eqref{eq:NewtonQuadraticConvergenceAppendix} holds true.
\end{proof}

\section{Symmetric criticality of travelling waves in the wave equation}\label{sec:ProofPalaisTW}

We provide a proof of \cref{prop:PalaisTW}.

\begin{Proposition*}[PSC for travelling waves]
Let $c \in \R \setminus \{0\}$, $b>0$, $X=[0,b/c]/\sim \times [0,b]/\sim$ be a torus, $W=C^1(X,\R^d)$ continuously differentiable functions and $\Sigma = \{u \in W \,|\, u(t+s,x+cs)=u(t,x)\}$ be travelling waves with wave speed $c$.
Let $S \colon W \to \R$
\begin{equation}\label{eq:SPSCAppendix}
S(u)=\int_0^{b/c}\int_0^b L(u(t,x),u_t(t,x),u_x(t,x)) \d x \d t
\end{equation}
be a continuously differentiable action functional with an autonomous Lagrangian $L \colon (\R^d)^3 \to \R$. 
Denote the restriction of $S$ to $\Sigma$ by $S_\Sigma \colon \Sigma \to \R$.
Then a travelling wave $u \in \Sigma$ is a stationary point of $S$, if and only if $u$ is a stationary point of $S_\Sigma$. Using the identification $\Sigma \cong C^1([0,b]/\sim,\R^d)$, $u(t,x)=f(x-ct)=f(\xi)$ for $u \in \Sigma$, we have
\begin{equation}\label{eq:SSigmaLAppendix}
S_\Sigma(f) = \frac bc \int_0^b L(f(\xi),-c f_\xi(\xi),f_\xi(\xi)) \d \xi.
\end{equation}
\end{Proposition*}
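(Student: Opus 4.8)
The plan is to realise $\Sigma$ as the fixed-point set of a symmetry group acting on $W$ and then to invoke Palais' principle of symmetric criticality (PSC). Define the one-parameter family of maps $s.u(t,x) := u(t+s,x+cs)$ for $s \in \R$. Because $X$ is the torus with periods $b/c$ in $t$ and $b$ in $x$, the shift by $s=b/c$ sends $(t,x)$ to $(t+b/c,x+b)\equiv(t,x)$, so the $\R$-action descends to an action of the compact group $G=\R/(b/c)\Z\cong S^1$; here the hypothesis $c\neq 0$ is used. Each $s.(\cdot)$ is a bounded linear operator on the Banach space $W=C^1(X,\R^d)$ and an isometry for the $C^1$-norm, since the shift is a measure- and derivative-preserving diffeomorphism of $X$. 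The fixed-point set $W^G=\{u\in W\mid s.u=u\ \forall s\}$ is exactly $\Sigma$, and under the identification $f(\xi):=u(0,\xi)$ one checks directly from $u(t+s,x+cs)=u(t,x)$ (put $t=0$, $x=\xi$, $s=t$) that $u(t,x)=f(x-ct)$, giving $\Sigma\cong C^1(([0,b]/\sim),\R^d)$.

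First I would establish the invariance $S(s.u)=S(u)$. Writing out $S(s.u)$ and using $(s.u)_t(t,x)=u_t(t+s,x+cs)$ and $(s.u)_x(t,x)=u_x(t+s,x+cs)$ together with the autonomy of $L$, the substitution $(t,x)\mapsto(t+s,x+cs)$ — a measure-preserving bijection of the torus $X$ — leaves the integral unchanged. This is the only place where both the autonomy of $L$ and the periodicity of $X$ enter, and I would record it as the key computation underlying the statement.

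With invariance in hand, PSC yields the equivalence, and because the action is linear the mechanism is elementary. The differential $\D S_u$ is a continuous linear functional, and invariance of $S$ (differentiated along $v$ at the fixed point $u$, using linearity of the action) gives $\D S_u(s.v)=\D S_u(v)$ for all $v\in W$ and all $s$. For arbitrary $v$, the Haar average $\bar v:=\int_G (s.v)\,\d s$ exists as a Bochner integral and lies in $W^G=\Sigma$; pulling the continuous functional $\D S_u$ inside the integral gives $\D S_u(\bar v)=\D S_u(v)$. Now if $u\in\Sigma$ is stationary for $S_\Sigma$, then $\D S_u$ vanishes on $T_u\Sigma=\Sigma$ (a closed linear subspace), so $\D S_u(v)=\D S_u(\bar v)=0$ for every $v\in W$, i.e.\ $u$ is stationary for $S$; the converse is immediate. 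Compactness of $G$, hence the existence of the normalised Haar measure, is exactly what makes the averaging work, which is why the reduction to the $S^1$-action matters. I expect the verification of the PSC hypotheses in this infinite-dimensional setting — in particular the identification $W^G=\Sigma$ and the convergence and $G$-invariance of the averaging integral — to be the main point requiring care, whereas the remaining steps are routine.

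Finally I would compute $S_\Sigma$. For $u\in\Sigma$ one has $u(t,x)=f(\xi)$ with $\xi=x-ct$, hence $u_t=-c f_\xi$ and $u_x=f_\xi$, so the integrand becomes $L(f(\xi),-cf_\xi(\xi),f_\xi(\xi))$, a $b$-periodic function of $\xi$. For each fixed $t$ the inner $x$-integral over a full spatial period equals $\int_0^b L(f(\xi),-cf_\xi(\xi),f_\xi(\xi))\,\d\xi$, independent of $t$; integrating this constant over $t\in[0,b/c]$ produces the factor $b/c$ and yields \eqref{eq:SSigmaLAppendix}.
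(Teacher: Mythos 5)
Your proof is correct and follows the same overall strategy as the paper --- realise $\Sigma$ as the fixed-point set of a compact circle group of shifts, verify invariance of $S$ via the measure-preserving substitution (the one place where autonomy of $L$ and periodicity of $X$ enter), and conclude by symmetric criticality --- but it differs in two worthwhile ways. First, where the paper simply cites Palais' principle for compact group actions as a black box, you prove the needed instance directly by Haar averaging: since the shifts act by linear isometries on $W$ and $u$ is a fixed point, $\D S_u(s.v)=\D S_u(v)$, so $\D S_u(v)=\D S_u(\bar v)$ with $\bar v=\int_G s.v\,\d s\in\Sigma$, and vanishing of $\D S_u$ on the closed subspace $\Sigma$ forces it to vanish on all of $W$. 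This makes the argument self-contained and makes explicit exactly why compactness of the group (existence of normalised Haar measure, convergence of the Bochner integral) is the operative hypothesis; the paper's citation hides this mechanism. Second, your derivation of \eqref{eq:SSigmaLAppendix} is simpler than the paper's: you observe that for $u\in\Sigma$ the integrand is a $b$-periodic function of $\xi=x-ct$ alone, so the inner $x$-integral equals $\int_0^b L(f,-cf_\xi,f_\xi)\,\d\xi$ independently of $t$, and the outer integral contributes the factor $b/c$; the paper instead passes to characteristic coordinates $(\xi,\theta)=(x-ct,x+ct)$ with a Jacobian factor $1/(2c)$ and a domain rearrangement using periodicity. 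Both computations are valid; yours is the more economical. The only cosmetic discrepancy is the parametrisation of the symmetry group ($\R/(b/c)\Z$ for you versus $[0,b]/\sim$ in the paper), which is an immaterial reparametrisation of the same circle action.
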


\begin{proof}
Equipped with the norm $\| \cdot \|_W$ with $\|u\|_W = \max_{(t,x)\in X} |u(t,x)| + \max_{(t,x)\in X} \| D u(t,x)\|_{\R^d \times \R^2}$ the space $W$ is a Banach space since the torus $X$ is compact.
The group $([0,b]/\sim,+)$ is compact and acts on the Banach space $W$ by $u \mapsto s.u$ with $s.u(t,x)=u(t+s,x+cs)$. The fixpoints of the group action are $\Sigma$. The considered action $S$ is invariant under the action, i.e.\ $S(u)=S(s.u)$ for all $s \in [0,b]/\sim$ since the Lagrangian is autonomous: let $u \in W$, $s \in [0,b)$.
\begin{equation}\label{eq:ComputationPropPalaisTW}
	\begin{split}
S(s.u) &= \int_0^{b/c}\int_0^b L(s.u(t,x),(s.u)_t(t,x),(s.u)_x(t,x)) \d x \d t\\
&=\int_{-s/c}^{(b-s)/c}\int_{-s}^{b-s} L(u(t,x),u_t(t,x),u_x(t,x)) \d x \d t\\
&=\int_0^{b/c}\int_0^b L(u(t,x),u_t(t,x),u_x(t,x)) \d x \d t\\
&=S(u).
	\end{split}
\end{equation}
By compactness of the group $([0,b]/\sim,+)$, Palais' principle of symmetric criticality\cite{palais1979} guarantees that
critical points of the restricted functional $S_\Sigma \colon \Sigma \to \R$ constitute critical points of $S$.

To proof validity of \eqref{eq:SSigmaLAppendix}, consider coordinates $(x-ct,x+ct) = (\xi,\theta)$ and a travelling wave $u(t,x)=f(x-ct)=f(\xi)$, $f \colon [0,b]/\sim \to \R^d$. We have
\begin{equation}\label{eq:SPSCL}
	\begin{split}
S_\Sigma(f)
&= S(u)
=\int_0^{b/c}\int_0^b L(f(\xi),-c f_\xi(\xi),f_\xi(\xi))\d x \d t\\
&=\int_{-b}^b\int_{|\xi|}^{2b-|\xi|} L(f(\xi),-c f_\xi(\xi),f_\xi(\xi)) \frac{\d\theta  \d \xi }{2c}\\
&\stackrel{(\ast)}{=}\int_{0}^b\int_{0}^{2b} L(f(\xi),-c f_\xi(\xi),f_\xi(\xi)) \frac{\d\theta  \d \xi }{2c}\\
&=\frac b c \int_{0}^b L(f(\xi),-c f_\xi(\xi),f_\xi(\xi)) \d \xi
	\end{split},
\end{equation}
where we have used periodicity $f(\xi + b)=f(\xi)$ in $(\ast)$.
\end{proof}

\begin{Remark}
For standing travelling waves $c=0$, the torus $X=[0,1]/\sim \times [0,b]/\sim$ is considered. An analogous statement is obtained with $b/c$ substituted by 1 in \eqref{eq:SPSCAppendix} and \eqref{eq:SSigmaLAppendix}.
\end{Remark}

\bibliography{resources}

\end{document}